\theoremstyle{plain} 
\newtheorem{thm}{Theorem}[section] 
\newtheorem{cor}[thm]{Corollary} 
\newtheorem{lem}[thm]{Lemma} 
\newtheorem{prop}[thm]{Proposition} 
\theoremstyle{definition}
\theoremstyle{definition}
\theoremstyle{remark} 
\newtheorem{rem}[thm]{Remark}
\newcommand{\f}[2]{\frac{#1}{#2}}
\newcommand{\tf}[2]{\tfrac{#1}{#2}}
\newcommand{\p}{\partial}
\newcommand{\bb}[1]{\bigg(#1\bigg)}
\newcommand{\qu}[1]{\left[#1\right]}
\newcommand{\gr}[1]{\left\{#1\right\}}
\newcommand{\scal}[3]{{\langle#1,#2\rangle}_{#3}}
\newcommand{\sn}[1]{\mathring{#1}}
\DeclareMathOperator*{\esup}{ess\,sup}
\newcommand{\N}{\mathbb{N}}
\newcommand{\R}{\mathbb{R}}
\newcommand{\C}{\mathbb{C}}
\newcommand{\D}{\mathcal{D}}
\newcommand{\T}{\mathcal{T}}
\newcommand{\LL}{\mathcal{L}}
\newcommand{\F}{\mathscr{F}}
\newcommand{\x}{\mathbf{x}}
\newcommand{\y}{\mathbf{y}}
\renewcommand{\k}{\mathbf{k}}
\newcommand{\z}{\mathbf{0}}
\newcommand{\ds}{\,ds}
\newcommand{\dt}{\,dt}
\newcommand{\dz}{\,dz}
\newcommand{\dteta}{\,d\theta}
\newcommand{\da}{\,d\alpha}
\newcommand{\dkm}{\,dk}
\newcommand{\dxm}{\,dx}
\newcommand{\dy}{\,dy}
\newcommand{\dtau}{\,d\tau}
\newcommand{\dome}{\,d\ome}
\newcommand{\dx}{\,{\rm d}\x}
\newcommand{\dsx}{\,{\rm dS}(\x)}
\newcommand{\dsy}{\,{\rm dS}(\y)}
\newcommand{\h}{\mathcal{H}}
\newcommand{\U}{\mathcal{U}}
\newcommand{\I}{\mathcal{I}}
\newcommand{\G}{\mathcal{G}^{\l}}
\newcommand{\Gn}{\mathcal{G}^{\nu}}
\newcommand{\g}{G^{\l}}
\newcommand{\gn}{G^{\nu}}
\newcommand{\somma}{\sum_{\ell=0}^\infty\sum_{m=-\ell}^\ell}
\newcommand{\ve}{\varepsilon}
\newcommand{\ep}{\varepsilon}
\newcommand{\al}{\alpha}
\newcommand{\lap}{\Delta}
\newcommand{\Ga}{\Gamma}
\newcommand{\de}{\delta}
\newcommand{\beq}{\begin{equation}}
\newcommand{\eeq}{\end{equation}}
\newcommand{\lf}{\left}
\newcommand{\ri}{\right}
\renewcommand{\b}[1]{\big(#1\big)}
\renewcommand{\i}{\imath}
\renewcommand{\exp}{{\rm e}}
\newcommand{\la}{\lambda}
\renewcommand{\l}{\lambda}
\renewcommand{\L}{\Lambda}
\newcommand{\ome}{\omega}
\renewcommand{\S}{\mathbb{S}^2}
\renewcommand{\leq}{\leqslant}
\renewcommand{\geq}{\geqslant}
\begin{document}

\title[Well--posedness of the 3D NLSE with sphere--concentrated nonlinearity]{Well--posedness of the three--dimensional NLS equation with sphere--concentrated nonlinearity}

\author[D. Finco]{Domenico Finco}
\address[D. Finco]{Universit\`a Telematica Internazionale Uninettuno, Facolt\`a di Ingegneria, corso Vittorio Emanuele II, 00186, Roma, Italy.}
\email{d.finco@uninettunouniversity.net}
\author[L. Tentarelli]{Lorenzo Tentarelli}
\address[L. Tentarelli]{Politecnico di Torino, Dipartimento di Scienze Matematiche ``G.L. Lagrange'', Corso Duca degli Abruzzi 24, 10129, Torino, Italy.} 
\email{lorenzo.tentarelli@polito.it}
\author[A. Teta]{Alessandro Teta}
\address[A. Teta]{Universit\`a degli Studi di Roma ``La Sapienza'', Dipartimento di Matematica ``G. Castelnuovo'', Piazzale Aldo Moro 5, 00185, Roma, Italy}
\email{teta@mat.uniroma1.it}

\date{\today}

\begin{abstract} 
 We discuss \emph{strong} local and global well--posedness for the three--dimensional NLS equation with nonlinearity concentrated on $\S$. Precisely, local well--posedness is proved for any $C^2$ power--nonlinearity, while global well--posedness is obtained either for small data or in the defocusing case under some growth assumptions. With respect to point--concentrated NLS models, widely studied in the literature, here the dimension of the support of the nonlinearity does not allow a direct extension of the known techniques and calls for new ideas.
\end{abstract}

\maketitle

\vspace{-.5cm}
{\footnotesize AMS Subject Classification: 81Q35, 35Q40, 35Q55, 35R06, 33C55, 33C10, 33C45, 44A15, 47A60}
\smallskip

{\footnotesize Keywords: NLS equation, concentrated nonlinearity, unit sphere, well--posedness, spherical harmonics.}
    

\section{Introduction}
\label{sec-intro}

NLS equation is well known to provide an effective model for the description of microscopic systems on a macroscopic/mesoscopic scale, as for instance Bose--Einstein Condensates (e.g., \cite{DGPS-99}). However it is also employed with totally different physical meaning in nonlinear optics, plasma waves, neurosciences (FitzHugh--Nagumo model), etc (e.g., \cite{M-05} and references therein). In particular, several attempts have been made to adapt this model to the case of quantum many body systems in presence of defects or impurities with a spatial scale much smaller than the dispersion of the wave function.

Two different singular equations have been suggested in the last decades to address this problem. The former arises perturbing the laplacian in the NLS equation with a singular point potential of delta type (see \cite{SM-20,SCMS-20}). Available results concern here mainly the 1D case, where local/global well--posedness and existence/stability of standing waves is now well understood (e.g., \cite{ANR-20,AN-09,AN-13,ANV-13,LFFKS-08}). In 2D and 3D, on the contrary, first well--posedness results have been obtained in \cite{CFN-21}, whereas standing waves have been discussed in \cite{ABCT-21,ABCT-22,FGI-22}.

On the other hand, the latter arises formally multiplying the nonlinear term of the NLS equation by a point potential of delta type, thus causing a \emph{concentration} of the nonlinearity. This model has been first proposed to describe phenomena such as charge accumulation in semiconductor interfaces or heterostructures (e.g., \cite{BKB-96,JPS-95,MA-93,MB-02,N-98,PJC-91}), nonlinear propagation in defected Kerr--type media (e.g., \cite{SKB-99,SKBRC-01,Y-05}) and Bose--Einstein condensates in optical lattices with laser beams generated defects (e.g., \cite{DM-11,LKMF-11}). Local and global well--posedness, blow--up and existence/stability of the standing waves for this equation are nowadays almost completely understood in 1D (e.g., \cite{AFH-21,AT-01,CFT-19,HL-20,HL-21}) and 3D (e.g., \cite{ADFT-03,ADFT-04,ANO-13,ANO-16}), 
whereas a better understanding of the 2D case has been obtained only in the last years by \cite{ACCT-20,ACCT-21,CCT-19,CFT-17}. Note, finally, that also a non-autonomous variant of this model have been widely studied in the last twenty years (e.g., \cite{CCLR-01,CDFM-05,CCL-18,BCT-22})

In this paper we present the first discussion, to the best of our knowledge, on a generalized model where the actual physical dimension of the defect in the concentrated model is taken into account. Indeed, in real cases defects and impurities are more likely to be modeled by smooth and closed manifolds $\mathcal{M}$ embedded in $\R^3$, rather than zero--dimensional objects like points.

More in detail, such model can be formally expressed by the initial value problem
\begin{equation}
 \label{eq-formalone}
 \left\{
 \begin{array}{ll}
  \displaystyle\i\f{\p\psi}{\p t}=-\lap\psi+\beta|\psi|^{2\sigma}\psi\,\delta_{\mathcal{M}} & \text{in}\quad\R^+\times\R^3\\[.4cm]
  \displaystyle\psi(0,\cdot)=\psi_0 & \text{in}\quad \R^3,
 \end{array}
 \right.
\end{equation}
or, equivalently, by the nonlinear initial-boundary value problem
\begin{equation}
 \label{eq-formalsecond}
 \left\{
 \begin{array}{ll}
  \displaystyle\i\f{\p\psi}{\p t}=-\lap\psi & \text{in}\quad\R^+\times\R^3\setminus\{\mathcal{M}\}\\[.4cm]
  \displaystyle \psi^+=\psi^-=\psi & \text{in}\quad\R^+\times\mathcal{M}\\[.4cm]
  \displaystyle \f{\partial \psi^+}{\partial n}-\f{\partial \psi^-}{\partial n}=\beta|\psi|^{2\sigma}\psi & \text{in}\quad\R^+\times\mathcal{M}\\[.5cm]
  \displaystyle\psi(0,\cdot)=\psi_0 & \text{in}\quad \R^3,
 \end{array}
 \right.
\end{equation}
where $\psi^+$ is the restriction of $\psi$ to the region outside $\mathcal{M}$, while $\psi^-$ is the restriction of $\psi$ to the region inside $\mathcal{M}$.

However, even though this new model provides a more accurate description of physically--relevant cases, it involves several challenging mathematical obstacles. Indeed, the main advantage of classical point models, which is the complexity reduction to a zero--dimensional time--integral equation, here is completely lost. In this case, the model reduces to a time--space integral equation supported on the manifold where the nonlinearity is placed.

Such difference calls for new ideas in the proofs of both local and global well--posedness. For this reason, in this paper we limit ourselves to the case of the unit sphere, i.e. $\mathcal{M}=\S$. In this way, it is possible to develop a strategy relying on the spherical harmonics decomposition that allows to overcome the complexity generated by the physical dimension of the defect. On the other hand, although the simplification of the geometry of the manifold makes the problem more manageable, it still shares all the intrinsic issues connected to manifolds of codimension one, thus representing a suitable paradigm for future research.

\begin{rem}
 Besides representing a more accurate description of real world phoenomena, the study of \eqref{eq-formalone} (or \eqref{eq-formalsecond}) may be seen as the first step of a new justification for the point models mentioned at the beginning. Indeed, in place of considering concentrated nonlinearities as concentration limits of spread nonlinear potentials (e.g., \cite{CFNT-14,CFNT-17}), one could obtain them as singular limits of manifold--concentrated nonlinearities when the manifold shrinks to a point.
\end{rem}


\subsection{The linear case}
\label{subsec:linear}

As for the point models, in order to give a precise meaning to \eqref{eq-formalone} (or \eqref{eq-formalsecond}), one has to begin by rigorously defining the linear case, which has been studied in \cite{AGS-87} (for more general geometries see, e.g., \cite{BEHL-17,BEL-14,BLL-14}, while for different singular potentials see, e.g., \cite{HHS-99,SV-02}).

Formally, for any fixed $\alpha\in\R$ it reads

\begin{equation}
 \label{eq-cauchyf}
 \left\{
 \begin{array}{ll}
  \displaystyle\i\f{\p\psi}{\p t}=\b{-\lap+\al\,\delta_{\S}}\,\psi & \text{in}\quad\R^+\times\R^3\\[.4cm]
  \displaystyle\psi(0,\cdot)=\psi_0 & \text{in}\quad \R^3
 \end{array}
 \right.
\end{equation}
where $\delta_{\S}$ denotes the superficial Dirac delta distribution supported on the unit sphere of $\R^3$, denoted by $\S$, i.e.
\[
 \langle h\,\delta_{\S},\varphi\rangle:=\int_{\S}h(\y)\varphi(\y)\dsy,\qquad\forall\varphi\in C_0^\infty(\R^3).
\]
However, in order to state it in a more consistent way, according to quantum mechanics, it is necessary to define $-\lap+\al\,\delta_{\S}$ as suitable self--adjoint operator $\h_\al$ that extends in a nontrivial way $-\Delta_{|C_0^\infty(\R^3\setminus\S)}$ via the von Neumann--Krein theory of self--adjoint extensions.

To this aim, for any fixed $\la>0$, denote first by $\G$ the Green potential associated with the unit sphere of the operator $-\lap+\l$ in $\R^3$, i.e.
\begin{equation}
\label{eq-pot}
 \G h(\x):=\int_{\S}\g(\x-\y)\,  h(\y)\,\dsy,\qquad \forall\, h:\S\to\C,\quad\forall\x\in\R^3,
\end{equation}
where $\g$ is the Green function of $-\lap+\l$ in $\R^3$, i.e.
\begin{equation}
\label{eq-green}
 \g(\x):=\f{\exp^{-\sqrt{\l}\,|\x|}}{4\pi|\x|}.
\end{equation}
In view of this, $\h_\al:L^2(\R^3)\to L^2(\R^3)$ can be defined as the operator with domain
\begin{equation}
\label{domop}
\D (\h_\al) := \big\{ u\in L^2 (\R^3): \, \exists \la>0 \: \text{ s.t. }\: u+\al \G u_{|\S}=: \phi^\la\in H^2(\R^3) \big\}
\end{equation}
and action
\begin{equation}
\label{eq-actH}
\h_\al u =-\Delta\phi^\la+\al\la\G u_{|\S},\qquad\forall u\in \D (\h_\al).
\end{equation}
In other words, functions $u\in \D(\h_\al)$ can be decomposed in a \emph{regular part} $\phi^\la$, on which the action of the operator coincides with that of the standard Laplacian, and in a \emph{singular part} $-\al \G u_{|\S}$, on which the action of the operator is the multiplication times $-\lambda$.

We highlight that $\la$ is a dummy parameter as it does not actually affect the definition of $\left( \h_\al, \D (\h_\al) \right)$. To check this, consider $u\in \D(\h_\al)$. By definition, there exist $\lambda>0$ such that $u=\phi^\la-\al\G u_{|\S}$, with $\phi^\la\in H^2(\R^3)$. Fix, now, $\nu>0,\,\nu\neq\la$. By \eqref{eq-pot}
\begin{equation}
\label{eq-diffpot}
 \big(\Gn u_{|\S}-\G u_{|\S}\big)(\x)=\int_{\S}u_{|\S}(\y)\big(\gn(\x-\y)-\g(\x-\y)\big)\dsy,
\end{equation}
and thus, differentiating \eqref{eq-diffpot} and recalling that \eqref{eq-green} entails $\gn-\g\in H^2(\R^3)$, one can prove that $\Gn u_{|\S}-\G u_{|\S}\in H^2(\R^3)$ in turn. To this aim, it is sufficient to note that $u_{|\S}\in H^{3/2}(\S)$ (as pointed out by Remark \ref{rem-altroop}) and check that, if $f \in L^2(\R^3)$ and $g \in C^0(\S)$ then $h:= f\ast g\delta_{\S} \in L^2(\R^3)$. This is a consequence of Jensen's inequality and Fubini's theorem, since
\begin{align*}
\| h \|_{ L^2(\R^3)}^2
&= \int_{\R^3} \lf| \int_{\S}  f(\x-\y) g(\y)  \dsy \ri|^2 \dx \\
& \leq 4\pi  \int_{\R^3}  \int_{\S}  |f(\x-\y)|^2 \,  |g(\y)|^2  \dsy  \dx \\
& = 4\pi  \int_{\S} |g(\y)|^2  \int_{\R^3}   |f(\x-\y)|^2 \,   \dsy  \dx = 4\pi \|g \|_{L^2(\S)}^2 \|f \|_{L^2(\R^3)}^2.
\end{align*}
Moreover, if $f \in H^2(\R^3)$ and $g \in C^0(\S)$, then $h\in  H^2(\R^3)$ since $\Delta h= \Delta f\,\ast g\delta_{\S} $, and thus the claim follows setting $f=\gn-\g$ and $g=u_{|\S}$.

As a consequence, if one sets $\phi^\nu:=\phi^\la+\al\big(\Gn u_{|\S}-\G u_{|\S}\big)$, then $\phi^\nu\in H^2(\R^3)$. On the other hand, as $-\Delta\big(\Gn u_{|\S}-\G u_{|\S}\big)=\la\G u_{|\S}-\nu\Gn u_{|\S}$, one finds that $-\Delta\phi^\la+\al\la\G u_{|\S}=-\Delta\phi^\nu+\al\nu\G u_{|\S}$, so that one can conclude that the decompositions with $\la$ and $\nu$ are completely equivalent.

\begin{rem}
\label{rem-altroop}
 Another way to see the independence of $\lambda$ is given by the possibility to rewrite \eqref{domop}--\eqref{eq-actH} as follows:
\begin{multline}
\label{domop2}
 \D(\h_\alpha):=\bigg\{u\in L^2(\R^3): u^+\in H^2\big(\R^3\setminus \overline{B_1(\z)}\big),\,u^-\in H^2\big(B_1(\z)\big),\\
 u_{|\S}^+=u_{|\S}^-=u_{|\S},\,\f{\partial u^+}{\partial n}\bigg|_{\S}-\f{\partial u^-}{\partial n}\bigg|_{\S}=\alpha u_{|\S}\bigg\},
\end{multline}
\[
 \h_\alpha u(\x):=-\Delta u(\x),\qquad\forall\x\in\R^3\setminus\S,
\]
where $B_1(\z)$ is the unit ball centered at the origin and
\begin{equation}
\label{eq-upum}
\left\{
\begin{array}{ll}
 \displaystyle u^+:\R^3\setminus \overline{B_1(\z)}\to\C & \text{such that}\qquad u^+(\x)=u(\x),\quad\forall\x\in \R^3\setminus \overline{B_1(\z)},\\[.2cm]
 \displaystyle u^-:B_1(\z)\to\C & \text{such that}\qquad u^-(\x)=u(\x),\quad\forall\x\in B_1(\z).
\end{array}
\right.
\end{equation}
Such formulation is useful also for two further reasons. On the one hand, it shows, by standard Trace theory, that $u_{|\S}\in H^{3/2}(\S)$, which is useful in the previous computations on the independence of $\lambda$. On the other hand, it yields that $\nabla u=(\nabla u^+)\mathds{1}_{\R^3\setminus \overline{B_1(\z)}}+(\nabla u^-)\mathds{1}_{B_1(\z)}$, which entails that $\nabla u\in L^2(\R^3)$. However, throughout the paper we prefer the form with the decomposition for $\lambda > 0$, as it makes several computations easier.
\end{rem}

By Stone's theorem, self--adjointness of $\h_\al$ yields global well--posedness in $L^2 (\R^3)$ of
\begin{equation}
\label{eq-cauchylin}
 \left\{
 \begin{array}{ll}
  \displaystyle\i\f{\p\psi}{\p t}=\h_\al\psi & \text{in}\quad\R^+\times\R^3\\[.4cm]
  \displaystyle\psi(0,\cdot)=\psi_0 & \text{in}\quad \R^3
 \end{array}
 \right.
\end{equation}
for every $\psi_0\in L^2 (\R^3)$. In addition, it is usual to represent the solution of \eqref{eq-cauchylin} by means of the Duhamel formula, i.e.
\begin{equation}
 \label{eq-duhamel_l}
 \psi(t,\x)=U_t \psi_0 (\x)-\i \al\int_0^t\int_{\S}\U(t-s,\x-\y)\,{\psi_{|\S}(s,\y)}\dsy\ds,
\end{equation}
where $U_t$ denotes the free Schr\"odinger propagator of $\R^3$, i.e. the operator with integral kernel
\begin{equation}
 \label{eq-intk}
 \U(t,\x):=\f{\exp^{-\f{|\x|^2}{4\i t}}}{(4\pi\i t)^{\f{3}{2}}}.
\end{equation}
In particular, \eqref{eq-duhamel_l} clearly shows that the governing quantity of the problem is the function $q:=\psi_{|\S}:[0,+\infty)\times\S\to\C$, which is usually called \emph{charge}, and which allows to reconstruct the whole $\psi$ using \eqref{eq-duhamel_l} as a definition. In order to find the evolution equation for $q$ it is sufficient to trace \eqref{eq-duhamel_l} on $\S$, thus obtaining
\begin{equation}
 \label{eq-chargel}
 q(t,\x)=(U_t \psi_0)_{|\S}(\x)-\i \al \int_0^t\int_{\S}\U(t-s,\x-\y)\,{q(s,\y)}\dsy\ds,\qquad t\geq0,\:\x\in\S.
\end{equation}

Finally, we also mention that it is often convenient to rewrite \eqref{eq-chargel} in a more compact and operatorial way. To this aim, first, one introduces the function $\I:\R^+\times\S\times\S\to\C$ defined by
\begin{equation}
 \label{eq-kernI}
 \I(t,\x,\y):=\U(t;\x-\y)_{\big|\{|\x|=|\y|=1\}}=\f{\exp^{\f{\i}{2t}}\,\exp^{\f{\x\cdot\y}{2\i t}}}{(4\i\pi t)^{\f{3}{2}}},
\end{equation}
which allows to construct the family of operators $(I_t)_{t>0}$ that associates any integrable function $g:\S\to\C$ with the function $I_t g:\S\to\C$ such that
\begin{equation}
 \label{eq-I}
 I_t g(\x):=\int_{\S}\I(t,\x,\y)g(\y)\dsy,\qquad\forall\x\in\S.
\end{equation}
Then, one introduces the operator $\L$ that associates any function $g:[0,+\infty)\times\S\to\C$ with the function $\L(g):[0,+\infty)\times\S\to\C$ such that
\begin{equation}
 \label{eq-Lambda}
 \L g(t,\x):=\int_0^t \big(I_{t-s}g(s,\cdot)\big)(\x)\ds
\end{equation}
and set
\beq
\label{source}
F_0 (t,\x):= \lf(U_t \psi_0\ri)_{|\S}(\x),
\eeq
so that \eqref{eq-chargel} reads
\begin{equation}
 \label{eq-chargsl}
 q(t,\x)+\i\al \b{ \L q}(t,\x)= F_0 (t,\x).
\end{equation}


\subsection{Setting and main results}
\label{subsec:results}

Now, to define the nonlinear analogous of \eqref{eq-cauchylin} it is sufficient to set
\beq 
\al= \al(\psi):=\beta |  \psi|_{\S}|^{2\sigma}= \beta |q|^{2\sigma} \qquad \beta\in \R,\quad \sigma>0.
\eeq
As a consequence \eqref{eq-cauchylin} is replaced by
\begin{equation}
\label{eq-cauchy}
 \left\{
 \begin{array}{ll}
  \displaystyle\i\f{\p\psi}{\p t}=\h\psi & \text{in}\quad\R^+\times\R^3\\[.4cm]
  \displaystyle\psi(0,\cdot)=\psi_0 & \text{in}\quad \R^3,
 \end{array}
 \right.
\end{equation}
where $\h$ is no more a linear self--adjoint operator, but a nonlinear map (again independent of $\la$) with domain
\begin{align}
\label{eq-non_dom1}
\D ( \h) : & = \big\{ u\in L^2 (\R^3): \, \exists \la>0 \: \text{ s.t. }\: u+\G \nu(u_{|\S})=:\phi^\la \in H^2(\R^3)\big\} \\[.2cm]
           & = \big\{ u\in L^2 (\R^3): \, \exists \la>0,\,q:\S\to\C \: \text{ s.t. } \\[.2cm]
\label{eq-non_dom2}           &  \hspace{5cm}u+\G \nu(q)=:\phi^\la \in H^2(\R^3)\:\text{ and }\:q=u_{|\S}\big\}
\end{align}
and action
\begin{equation}
\label{eq-nonact}
\h u =-\Delta\phi^\la+\la\G\nu(q),\qquad\forall u=\phi^\la-\G \nu(q)\in \D(\h),
\end{equation}
where
\begin{equation}
\label{eq-nonlinearity}
\nu:\C\to\C,\qquad \nu(z): = \beta|z|^{2\sigma}z\quad\forall z\in\C.
\end{equation}
The Cauchy Problem in \eqref{eq-cauchy} thus represents the rigorous formulation of \eqref{eq-formalone} and \eqref{eq-formalsecond}.

\smallskip
In this paper we study \eqref{eq-cauchy} in $\D(\h)$, that is in a \emph{strong sense}, obtaining the following results.

\begin{thm}[Local Well--Posedness]
\label{main}
Let $\beta\in\R$, $\sigma\geq1/2$ and $\psi_0=\phi_0^\la-\G \nu(q_0) \in \D(\h)$. Then:
\begin{itemize}
\item[(i)] there exists $T>0$ such that \eqref{eq-cauchy} admits a unique strong solution 
\begin{equation}
\label{eq-regularity}
\psi\in C^0\big([0,T], \D(\h)\big)\cap C^1 \big([0,T], L^2(\R^3)\big);
\end{equation}
\item[(ii)] mass and energy are preserved along the flow, namely for all $t\in(0,T]$
\beq \label{massa}
M[\psi(t,\cdot)] :=\|\psi(t,\cdot) \|_{L^2(\R^3) }^2=M[\psi_0]
\eeq
\and 
\beq \label{energia}
E[\psi(t,\cdot)]: = \|\nabla\psi(t,\cdot)\|^2_{L^2(\R^3)}  +\f{\beta }{ \sigma+1} \|q(t,\cdot)\|_{L^{2\sigma+2}(\S)}^{2\sigma+2}=E[\psi_0].
\eeq
\end{itemize}
\end{thm}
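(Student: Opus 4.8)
The natural strategy is to reduce the PDE to a fixed point problem for the charge $q$ on $\S$, solve that by contraction, then promote the charge solution back to a strong solution of \eqref{eq-cauchy} and verify the conservation laws. First I would write down the nonlinear analogue of the charge equation \eqref{eq-chargsl}, namely
\begin{equation*}
 q(t,\x)+\i\b{\L\big(\nu(q)\,q\big)}(t,\x)=F_0(t,\x),
\end{equation*}
or more precisely with the nonlinear coefficient $\al(\psi)=\beta|q|^{2\sigma}$ inserted inside the Duhamel convolution, i.e. $q+\i\L\big(\beta|q|^{2\sigma}q\big)=F_0$. The plan is to set up this equation in a space like $X_T:=C^0\big([0,T],Y\big)$ for a suitable trace space $Y$ on $\S$ — the regularity \eqref{eq-regularity} and the decomposition $\psi_0=\phi_0^\la-\G\nu(q_0)$ suggest that $q_0\in H^{3/2}(\S)$ (cf. Remark \ref{rem-altroop}), so $Y$ should be $H^{3/2}(\S)$ or a slightly adjusted fractional Sobolev space on the sphere, chosen so that (a) the map $q\mapsto\beta|q|^{2\sigma}q$ is locally Lipschitz on $Y$ (this is where $\sigma\geq1/2$ enters, guaranteeing $\nu\in C^1$ hence the Nemytskii operator is locally Lipschitz on a Sobolev space of sufficiently high order in the $2$-dimensional manifold $\S$), and (b) the operator $\L$ gains enough regularity in time/space. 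The key harmonic-analytic input is a mapping estimate for $\L$: using the spherical harmonics decomposition of the kernel $\I$ from \eqref{eq-kernI} (expanding $\exp^{\x\cdot\y/2\i t}$ in Legendre polynomials / spherical Bessel functions), $I_t$ is diagonal on each spherical-harmonic sector, and one should be able to show $\L:C^0([0,T],Y)\to C^0([0,T],Y)$ is bounded with operator norm $o(1)$ as $T\to0$, or at least that $q\mapsto F_0-\i\L(\beta|q|^{2\sigma}q)$ is a contraction on a small ball of $X_T$ for $T$ small. Banach fixed point then yields a unique local $q\in C^0([0,T],Y)$.

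Next I would reconstruct $\psi$ via the Duhamel formula \eqref{eq-duhamel_l} with $\al$ replaced by $\beta|q|^{2\sigma}$, i.e.
\begin{equation*}
 \psi(t,\x)=U_t\psi_0(\x)-\i\int_0^t\int_{\S}\U(t-s,\x-\y)\,\beta|q(s,\y)|^{2\sigma}q(s,\y)\dsy\ds,
\end{equation*}
and check that this $\psi$ actually lies in $\D(\h)$ for every $t$, with regular part $\phi^\la(t)=\psi(t)+\G\nu(q(t))\in H^2(\R^3)$, and solves \eqref{eq-cauchy}. This requires differentiating the Duhamel representation in $t$ (getting $C^1([0,T],L^2)$), identifying $\i\p_t\psi$ with $-\lap\phi^\la+\la\G\nu(q)=\h\psi$, and showing the trace of $\psi$ on $\S$ is indeed $q$ (consistency, which follows because $q$ solves the traced equation). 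The regularity statements $\psi\in C^0([0,T],\D(\h))$ follow from continuity of $q\mapsto\G\nu(q)$ and of $t\mapsto\phi^\la(t)$ in $H^2$; uniqueness of $\psi$ transfers from uniqueness of $q$ since any strong solution has a trace solving the charge equation.

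Finally, for part (ii) I would prove the conservation laws. Mass conservation \eqref{massa}: multiply the equation by $\overline{\psi}$, integrate over $\R^3$, take imaginary parts; the boundary term on $\S$ is $\beta\int_\S|q|^{2\sigma}|q|^2$, which is real, so it drops out of $\tfrac{d}{dt}\|\psi\|_2^2$. One must justify the integration by parts using the jump condition $\partial_n\psi^+-\partial_n\psi^-=\beta|q|^{2\sigma}q$ from \eqref{domop2}, splitting $\R^3$ into the two regions across $\S$. Energy conservation \eqref{energia}: pair the equation with $\p_t\overline\psi$, take real parts; the kinetic term gives $\tfrac12\tfrac{d}{dt}\|\nabla\psi\|_2^2$ and the sphere term gives $\mathrm{Re}\,\beta\int_\S|q|^{2\sigma}q\,\p_t\overline q=\tfrac{1}{2\sigma+2}\tfrac{d}{dt}\big(\beta\|q\|_{L^{2\sigma+2}(\S)}^{2\sigma+2}\big)$, which is exactly the potential term in $E$. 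Here a mild technical point is that a priori $\psi\in C^1([0,T],L^2)$ only, so $\|\nabla\psi\|_2^2$ need not be obviously differentiable; the standard fix is to prove the identity first for the regular part / for smoother approximating data and pass to the limit, or to work with the quadratic form of $\h_\al$ along the flow.

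**Main obstacle.** The crux is the mapping/contraction estimate for $\L$ on the correct sphere-Sobolev space: one needs $\L$ to be continuous (and contractive for small $T$) on $C^0([0,T],H^{3/2}(\S))$ (or whatever $Y$ turns out to be), which demands precise control of the spherical-Bessel/Legendre multipliers defining $I_t$ — in particular their behaviour as the angular index $\ell\to\infty$ and as $t\to0^+$ — combined with the fact that $|z|^{2\sigma}z$ maps $H^{3/2}(\S)$ into itself only because $\S$ is two-dimensional and $\sigma\geq1/2$. Getting the time decay and the angular-index growth to balance so that the fixed-point argument closes is the heart of the matter; everything else (reconstruction, conservation laws) is comparatively routine once $q$ is in hand.
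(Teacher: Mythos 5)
Your overall architecture (charge equation on $\S$, fixed point in a trace space, reconstruction of $\psi$ via Duhamel, then conservation laws) matches the paper's, and your identification of $H^{3/2}(\S)$ as the right trace space and of $\sigma\geq 1/2$ as the threshold for the Schauder estimate on a two--dimensional manifold is correct. There is, however, a decisive gap in your claim that the reconstruction step is ``comparatively routine once $q$ is in hand.'' It is not: knowing only $q\in C^0\big([0,T],H^{3/2}(\S)\big)$ does not let you exhibit the regular part $\phi^\la(t)=\psi(t)+\G\nu(q(t))$ as an $H^2(\R^3)$--valued continuous function, because extracting it from the Duhamel integral requires an integration by parts in $s$ that produces $\partial_s\nu\big(q(s)\big)$; one therefore needs $q\in H^1\big([0,T],L^2(\S)\big)$ \emph{in time}. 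The paper obtains this through a separate bootstrap (item (ii) of Proposition \ref{carica}) that iterates seven times a gain of $1/7$ of a time derivative for $\L$ (Propositions \ref{prop-reg1}--\ref{prop-reg2} and Corollary \ref{cor-reg2}), which in turn rests on oscillatory--integral bounds for the Fourier transform of $t\mapsto H(t)\rho(t,\ell)$ uniform in $\ell$, plus the auxiliary rewriting \eqref{boot2} around $q_0$ to get past the $\al=1/2$ obstruction in the bootstrap. None of this is hinted at in your plan, and neither $\psi\in C^0([0,T],\D(\h))$ nor $\psi\in C^1([0,T],L^2(\R^3))$ can be reached without it.

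Two smaller issues. First, the contraction metric: you propose to contract directly on a ball of $C^0\big([0,T],H^{3/2}(\S)\big)$, but for $\sigma\in[1/2,1)$ the map $\nu$ is not obviously locally Lipschitz from $H^{3/2}(\S)$ to itself (its derivative is only H\"older), and the paper only proves the one--sided bound \eqref{schauder}, not a difference estimate in $H^{3/2}$; the argument is closed via Kato's metric--weakening trick, i.e. invariance of the $H^{3/2}$ ball combined with contraction in the $L^\infty\big([0,T],L^2(\S)\big)$ distance using the pointwise inequality \eqref{eq-nonlin}. Second, for the energy, your proposed fix (prove the identity for smoother approximating data and pass to the limit) is not what the paper does and would itself require a continuous--dependence theory that is not established; the paper instead computes $\|\nabla\psi(t)\|_{L^2(\R^3)}^2$ directly from the Duhamel representation by expanding the square and integrating by parts for operator--valued functions, which is again a place where the $H^1$--in--time regularity of $\nu(q)$ is indispensable.
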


\begin{rem}
\label{rem-opnorm}
 Note that $\D(\h)$ is not a vector space. Nevertheless, $C^0\big([0,T]; \D(\h)\big)$ is meant as the set of those functions $\psi:[0,T]\times\R^3\to\C$ such that $\psi(t,\cdot)\in\D(\h)$, for every $t\in[0,T]$, and such that $\|\psi(t+h,\cdot)-\psi(t,\cdot)\|_{\D(\h)}\to0$, as $h\to0$, for every $t\in[0,T]$, where $\|\cdot\|_{\D(\h)}:=\|\cdot\|_{L^2(\R^3)}+\|\h (\cdot)\|_{L^2(\R^3)}$.
\end{rem}

\begin{rem}
\label{rem-altramap}
 Note also that the energy is well defined for functions $u\in\D(\h)$. Indeed, the potential part is well defined by Sobolev embeddings in $\S$ (see Section \ref{sec-sobs}); whereas, arguing as in the linear case (see Remark \ref{rem-altroop}), one may rewrite \eqref{eq-non_dom1}--\eqref{eq-nonact} as
\begin{multline}
\label{dommap2}
 \D(\h):=\bigg\{u\in L^2(\R^3): u^+\in H^2\big(\R^3\setminus \overline{B_1(\z)}\big),\,u^-\in H^2\big(B_1(\z)\big),\\
 u_{|\S}^+=u_{|\S}^-=u_{|\S},\,\f{\partial u^+}{\partial n}\bigg|_{\S}-\f{\partial u^-}{\partial n}\bigg|_{\S}=\nu\big(u_{|\S}\big)\bigg\},
\end{multline}
\[
 \h u(\x):=-\Delta u(\x),\qquad\forall\x\in\R^3\setminus\S,
\]
where $B_1(\z)$ is the unit ball centered at the origin and $u^+,\,u^-$ are defined as in \eqref{eq-upum}, and thus $\nabla u\in L^2(\R^3)$. Furthermore, definition \eqref{dommap2} clearly shows the connection between \eqref{eq-cauchy} and \eqref{eq-formalsecond}.
\end{rem}

\begin{rem}
 Note finally that, combining \eqref{massa}, \eqref{energia} and Proposition \ref{carica}, one immediately finds that
 \[
  \psi\in C^0\big([0,T];H^1(\R^3)\big)
 \]
on any interval $[0,T]$ on which \eqref{eq-cauchy} is well--posed.
\end{rem}

\begin{thm}[Global Well--Posedness]
\label{main2}
Let $\beta\in\R$, $\sigma\geq1/2$ and $\psi_0=\phi_0^\la-\G \nu(q_0) \in \D(\h)$. Let also 
\begin{equation}
\label{eq-tmax}
 T^*:=\sup\big\{T>0:\text{item (i)$\&$(ii) of Theorem \ref{main} hold }\big\}.
\end{equation}
Then:
\begin{itemize}
\item[(i)] there exists $\ep>0$ such that, if $\| \phi_0^\la\|_{H^2(\R^3)}<\ep$, then $T^*=+\infty$;
\item[(ii)] if $\beta>0$ and $\sigma< 4/5$, then $T^*=+\infty$.
\end{itemize}
\end{thm}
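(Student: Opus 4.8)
\emph{Blow-up alternative.} The strategy is to combine the local theory of Theorem~\ref{main} with \emph{a priori} bounds preventing the quantity that governs the local existence time from blowing up in finite time. That quantity is $\|\psi_0\|_{\D(\h)}=\|\psi_0\|_{L^2(\R^3)}+\|\h\psi_0\|_{L^2(\R^3)}$; by \eqref{massa} its first summand is conserved, and since $\h\psi=-\Delta\phi^\la+\la\G\nu(q)$ with $q=\psi_{|\S}$ controlled by $\phi^\la$ (through the trace and the identity $\psi=\phi^\la-\G\nu(q)$), the second summand is controlled by $\|\phi^\la\|_{H^2(\R^3)}$. The usual continuation argument thus gives that either $T^*=+\infty$ or $\|\phi^\la(t,\cdot)\|_{H^2(\R^3)}\to+\infty$ as $t\to(T^*)^-$, so it suffices to bound $\|\phi^\la(t,\cdot)\|_{H^2(\R^3)}$ on $[0,T^*)$ --- and, in case~(i), to keep it small.

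\emph{Uniform $H^1$ control for case~(ii).} By \eqref{massa} the $L^2$-norm is conserved, and since $\beta>0$ the potential term in \eqref{energia} is nonnegative, so $\|\nabla\psi(t,\cdot)\|_{L^2(\R^3)}^2\leq E[\psi_0]$ and $\|q(t,\cdot)\|_{L^{2\sigma+2}(\S)}^{2\sigma+2}\leq\tf{\sigma+1}{\beta}E[\psi_0]$; hence $\psi\in L^\infty\big((0,T^*),H^1(\R^3)\big)$ with bound depending only on the data, and in particular $q\in L^\infty_t\big(H^{1/2}(\S)\cap L^{2\sigma+2}(\S)\big)$. (In case~(i) the energy need not be coercive, because $\beta$ may be negative; there one works instead with the smallness of the data, as described below.)

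\emph{Upgrade to $\D(\h)$ --- the main obstacle.} It remains to bound $\|\phi^\la(t,\cdot)\|_{H^2(\R^3)}$ from the information on $q$. The plan is to reinsert it into the charge equation \eqref{eq-chargsl}, with $\al$ replaced by $\nu(q)=\beta|q|^{2\sigma}q$, and to rerun the fixed-point estimate of the local theory \emph{globally} on $[0,T^*)$: in case~(ii) the conserved $H^1$-bound furnishes the constants that turn the contraction estimate into a \emph{linear} Gronwall inequality for the local-theory norm of $q$, which, via $\phi^\la=\psi+\G\nu(q)$ and the mapping properties of the single-layer potential $\G$, controls $\|\phi^\la(t,\cdot)\|_{H^2(\R^3)}$. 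For this to close, the nonlinearity $q\mapsto\nu(q)$ must be \emph{subcritical} relative to the smoothing available: $\L$ acts as a time-fractional integral of order $1/2$ (as is visible, in the spherical-harmonic representation, from the $\tau^{-1/2}$ behaviour of its kernel as $\tau\to0^+$), $\G$ is $3/2$-smoothing from $\S$ into $\R^3$, and $H^{1/2}(\S)$ embeds into the $L^{p}(\S)$-scale; balancing these exponents against the growth rate $2\sigma+1$ of $\nu$ yields exactly the threshold $\sigma<4/5$. In case~(i) the smallness of $\|\phi_0^\la\|_{H^2(\R^3)}$ propagates --- through $q_0=(\phi_0^\la)_{|\S}-(\G\nu(q_0))_{|\S}$, a contraction, and the fact that $\nu$ vanishes to order $2\sigma+1\geq2$ at $0$ --- to smallness of the data in every relevant norm, so the same estimate closes globally for \emph{every} $\sigma\geq1/2$, $\nu(q)$ now being a genuine perturbation. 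In all cases $\|\phi^\la(t,\cdot)\|_{H^2(\R^3)}$ stays bounded on $[0,T^*)$, contradicting the blow-up alternative unless $T^*=+\infty$. The technical core will be the sharp mapping properties of $\L$ on the spherical-harmonic side and the careful tracking of the Sobolev exponents on $\S$ that isolates the constant $4/5$.
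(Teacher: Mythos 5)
Your skeleton for item (ii) — blow--up alternative, conservation laws giving a priori bounds, then a Gronwall argument run on the charge equation — is the same as the paper's (which phrases the alternative in terms of $\|q(t)\|_{H^{3/2}(\S)}$ rather than $\|\phi^\la(t)\|_{H^2(\R^3)}$; the two are equivalent via Proposition \ref{pro-BC}). But there are two genuine gaps. First, the Gronwall inequality in $H^{3/2}(\S)$ does not close from the conservation--law bounds alone: the Schauder estimate \eqref{schauder_true} reads $\|\nu(q)\|_{H^{3/2}(\S)}\leq c\|q\|_{L^\infty(\S)}^{2\sigma}\|q\|_{H^{3/2}(\S)}$, so the inequality is linear only once $\|q(t)\|_{L^\infty(\S)}$ is a priori bounded — and the conserved quantities only give $H^{1/2}(\S)\cap L^4(\S)$ control, which on a two--dimensional manifold does not embed into $L^\infty$. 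The missing bridge is precisely the paper's Step (i): an a priori bound in $W^{3/4,\overline{r}}(\S)$ for some $\overline{r}\in(8/3,10/3)$, obtained by running the charge equation through the $L^p$--$L^q$ dispersive estimates for $I_t$ (Lemma \ref{lem-disp}) together with a fractional Leibniz rule, and then $W^{3/4,\overline{r}}(\S)\hookrightarrow L^\infty(\S)$ since $\overline{r}>8/3$. It is exactly here that $\sigma<4/5$ arises: integrability of $|t-s|^{-\de(p)}$ forces $\overline{r}<10/3$, while $\big\||q|^{2\sigma}\big\|_{L^{\overline{r}/(\overline{r}-2)}(\S)}\lesssim\|q\|_{L^4(\S)}^{2\sigma}$ forces $2\sigma\,\overline{r}/(\overline{r}-2)\leq 4$, and the two are compatible iff $\sigma<4/5$. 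Second, your heuristic for the threshold rests on the claim that $\L$ has a $\tau^{-1/2}$ kernel; the paper proves the symbol bound $|\rho(t,\ell)|\lesssim t^{-2/3}$ uniformly in $\ell$ and shows in Remark \ref{rem-global} (via Airy asymptotics of $J_\nu$ in the transition region) that the exponent $2/3$ is \emph{sharp}, so the $\tau^{-1/2}$ premise is false and the constant $4/5$ cannot be derived that way.

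For item (i) there is also a gap: smallness propagation by a continuation/bootstrap argument does not close by itself, because the naive estimate $\|\L\nu(q)(t)\|\lesssim\int_0^t|t-s|^{-2/3}\|q(s)\|_{H^{3/2}(\S)}^{2\sigma+1}\ds$ grows like $t^{1/3}$ even for uniformly bounded $q$. The paper instead performs a single contraction on $L^\infty\big([0,+\infty),H^{3/2}(\S)\big)$, and the essential mechanism is the time decay of $I_t$: splitting the Duhamel integral at $s=1$ and using $\|I_sg\|_{L^\infty(\S)}\lesssim s^{-3/2}\|g\|_{L^1(\S)}$ for $s>1$ (integrable at infinity) together with $s^{-2/3}$ for $s<1$ yields a bound on $\|\L\nu(q)\|_{L^\infty([0,+\infty),H^{3/2}(\S))}$ that is uniform in time. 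Your write--up never invokes this decay, and without it the small--data argument does not produce a global bound.
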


\begin{rem}
\label{rem-bastauno}
 Note that the smallness assumption on the regular part of $\psi_0$, displayed by item (i) of Theorem \ref{main2}, tacitly implies a smallness assumption on the initial charge $q_0$ too. Indeed, by \eqref{eq-stima-contr} with $\eta={\phi_\la}_{|\S}$ and standard Trace inequalities, one can see that
 \[
  \|q_0\|_{H^{3/2}(\S)}\leq 2\|{\phi_\la}_{|\S}\|_{H^{3/2}(\S)}\leq C\|\phi_\la\|_{H^{2}(\R^3)}\leq C\ep
 \]
\end{rem}

The proofs of Theorems \ref{main} and \ref{main2} is based on a discussion of the features of the function $\psi$ defined by the nonlinear analogous of \eqref{eq-duhamel_l}, i.e.
\begin{equation}
 \label{eq-duhamel}
 \psi(t,\x)=U_t \psi_0 (\x)-\i\int_0^t\int_{\S}\U(t-s,\x-\y)\,\nu\b{q(s,\y)}\dsy\ds,
\end{equation}
whenever the charge $q$ is the solution of the nonlinear analogous of \eqref{eq-chargsl}, i.e.
\begin{equation}
 \label{eq-charge}
 q(t,\x)+\i\b{ \L \nu(q)}(t,\x)= F_0(t,\x),
\end{equation}
a.k.a. \emph{charge equation}, which thus arises as the governing equation of the model and is the center of our investigation.

Some further comments are in order. As for the point delta models, local well--posedness and conservation laws are proved both for the defocusing case, i.e. $\beta>0$, and for the focusing case, i.e. $\beta<0$. Unfortunately, in contrast to those model, here a lower bound on the power of the nonlinearity is required. This is due to the fact that one cannot apply the Fixed Point theorem to \eqref{eq-charge} with a sufficiently low spacial regularity to allow non $C^2$--nonlinearities, even using the well known metric--weakening trick by Kato (introduced by \cite{K-87,K-89}). More details are provided by Remark \ref{sigma}. We are not able to establish whether this is only a technical issue due to our use of the decomposition in spherical harmonics or not. Our guess is that the former guess is true, but its overcoming is out of reach at the moment.

On the other hand, also the results on the global well--posedness displays restrictions that are not present in the point delta models. Indeed, although for small initial data it is possible to prove it without further assumptions on $\beta$ and $\sigma$, for general data we have to limit ourselves to the defocusing case and, moreover, we have to require an upper bound on the power of the nonlinearity. The reason lies again in the technical issues connected to the physical dimension of the support of the nonlinearity, which makes more difficult the use of the classical blow--up alternative argument.

More precisely, in the defocusing case, unless one assumes $\sigma<4/5$, the energy conservation yields a--priori estimates on the time growth of the charge with respect to spatial regularities that are weaker with respect to the required one for the blow--up alternative in $\D(\h)$, which is $H^{3/2}(\S)$ (as we will see in Section \ref{sec-global}). At the moment it is not clear whether this threshold is optimal or not. Again, the guess coming from point delta models is that globality should hold for any power in the defocusing case. However, in contrast to point delta models, here the spatial regularity of the charge plays a crucial role and requires different strategies, which cannot be straightforwardly extended to all the powers (more details are provided by Remark \ref{rem-global}). On the other hand, even if unlikely, it is not possible to exclude in principle that the non dimensionless of the support of the delta give rise to phenomena of loss of regularity along the flow (which thus would prevent global well--posedness in $\D(\h)$). We plan to study in future papers the scattering and the possible existence of blow--up solutions in order to better understand the behavior of the problem for long times.

Finally, we also mention that Theorem \ref{main2} does not address the focusing case for general initial data. Here the missing tool is a suitable version of the Gagliardo--Nirenberg inequality that estimate the potential energy, which is concentrated on the sphere, by means of the mass and the kinetic energy, which are spread on the whole $\R^3$. Once more, this is an issue strongly related to the non dimensionless nature of the support of the nonlinearity. Also this problem will be addressed by our future research.

\begin{rem}
Note that we focus on power nonlinearities for the sake of simplicity and because they are the most relevant ones from a physical point of view. However, also more general types of nonlinearities could be considered with our strategy (as in \cite{K-95} for the standard NLS equation).
\end{rem}


\subsection{Organization of the paper}
\label{subsec:org}

The paper is organized as follows.
\begin{itemize}
 \item In Section \ref{sec-basis} we recall some classical topics of analysis on $\S$ in order to fix notations; precisely:
 \begin{itemize}
  \item Section \ref{sec-sobs} concerns spherical harmonics and Sobolev spaces on $\S$;
  \item Section \ref{subsec-function} concerns Sobolev spaces for functions of time and space, where the space variable varies on a sphere;
  \item Section \ref{sec-BF} concerns Bessel functions, the Bessel--Fourier transform and its connection with the Fourier transform.
 \end{itemize}
 \item In Section \ref{sec-prel} we discuss some preliminary tools that are necessary for the proofs of the main results; precisely:
 \begin{itemize}
  \item Section \ref{sec-opLambda} addresses the properties (of the operators $I_t$ defined by \eqref{eq-I} -- Lemmas \ref{lem-disp} and \ref{lem-sobI} -- and) of the operator $\L$ defined by \eqref{eq-Lambda} (Propositions \ref{prop-cont}, \ref{prop-reg1} and \ref{prop-reg2} and Corollary \ref{cor-reg2});
  \item Section \ref{subsec:qzero} addresses the regularity of the trace on $\S$ of functions in $\D(\h)$ (Proposition \ref{pro-BC});
  \item Section \ref{subsec-regsource} addresses the regularity of the source term of the charge equation defined by \eqref{source} (Proposition \ref{prop-source}).
 \end{itemize}
 \item In Section \ref{sec-local} we prove local well--posedness of \eqref{eq-cauchy}, that is item (i) of Theorem \ref{main}, thanks to a careful analysis of the properties of the charge equation \eqref{eq-charge} (Proposition \ref{carica}).
 \item In Section \ref{sec-cons} we show conservation of mass and energy along the flow, that is item (ii) of Theorem \ref{main}.
 \item In Section \ref{sec-global} we deal with global well--posedness of \eqref{eq-cauchy}, that is Theorem \ref{main2}; precisely,
 \begin{itemize}
  \item Section \ref{subsec-globalsmall} studies the case of small initial data (i.e., item (i));
  \item Section \ref{subsec-globaldef} studies the defocusing case for general initial data (i.e., item (ii)) via a blow--up alternative argument (Lemma \ref{lem-bualternative}).
 \end{itemize}
\end{itemize}
Furthermore, Appendix \ref{sec-schauder} presents the proof of \eqref{schauder_true}, which is a crucial tool throughout the paper, whereas Appendix \ref{app-formula} presents the proof of \eqref{eq-integralbessel}, which is necessary to prove the regularity transfer from the charge $q$ to the function $\psi$ defined by \eqref{eq-duhamel} (Proposition \ref{ricostruzione}).


\subsection*{Fundings and acknowledgements} L.T. has been partially supported by the INdAM GNAMPA project 2022 ``Modelli matematici con singolarit\`a per fenomeni di interazione'' (CUP E55F22000270001) and by the PRIN 2022 ``Nonlinear dispersive equations in presence of singularities (NoDES)'' (CUP E53D23005450006).
D.F. and A.T. have been partially supported by the PRIN 2022 ``Singular interactions and Effective Models in Mathematical Physics'' (CUP H53D23001980006) and also acknowledge the support of GNFM - INdAM. We also wish to thank William Borrelli and Fabio Nicola for helpful suggestions concerning Sobolev spaces on manifolds.

\subsection*{Data availability statement}
Data sharing not applicable to this article as no datasets were generated or analysed during the current study.
 
\subsection*{Conflict of interest}
On behalf of all authors, the corresponding author states that there is no conflict of interest.


\section{Basics of analysis on $\S$}
\label{sec-basis}

Before starting any discussion of the results stated in Section \ref{subsec:results}, it is worth fixing some notation and recalling some well known facts about the analytical tools usually involved in the study of problems on $\S$.
   

\subsection{Sobolev spaces on $\S$}
\label{sec-sobs}

A crucial role for the definition of Sobolev spaces on $\S$ is played by the so--called \emph{spherical harmonics} (see, e.g., \cite{NIST}). For every fixed $\ell\in\N$ and every fixed $m\in\{-\ell,\dots,0,\dots,\ell\}$, the spherical harmonic of order $\ell$ and $m$, which we denote by $Y_{\ell,m}$, is the function $Y_{\ell,m}:\S\to\C$, defined by
\[
 Y_{\ell,m}(\x):=(-1)^m\sqrt{\f{(2\ell+1)(l-m)!}{4\pi(\ell+m)!}}\,\exp^{\imath m\phi}\, P_{\ell,m}(\cos\theta)
\]
where $\theta\in[0,\pi]$ and $\phi\in[0,2\pi]$ are the \emph{colatitude} and the \emph{longitude} (respectively) associated with $\x$, and $P_{\ell,m}$ is the \emph{associated Lagrange polynomial} of order $\ell$ and $m$, namely the smooth solution of
\[
 (1-s^2)P_{\ell,m}''(s)-2sP_{\ell,m}'(s)+\qu{\ell(\ell+1)-\tf{m^2}{1-s^2}}P_{\ell,m}(s)=0.
\]
Such functions are known to be the eigenfunctions of the Laplace--Beltrami operator on $\S$, i.e.
\begin{equation}
 \label{eq-diagLB}
 -\Delta_{\S}Y_{\ell,m}=\ell(\ell+1)Y_{\ell,m},
\end{equation}
with
\[
 \Delta_{\S}:=\f{1}{\sin\theta}\f{\partial}{\partial\theta}\bigg(\sin\theta\f{\partial}{\partial\theta}\bigg)+\f{1}{\sin^2\theta}\f{\partial^2}{\partial\phi^2},
\]
and an orthonormal basis of $L^2(\S)$, so that for every $g\in L^2(\S)$
\begin{gather}
 g= \somma  g_{\ell,m}  Y_{\ell,m},\\[.2cm]
 \|g\|_{L^2(\S)}^2:=\int_{\S}|g(\x)|^2\dsx=\somma|g_{\ell,m}|^2,
\end{gather}
with
\[
 g_{\ell,m}:=\scal{g}{Y_{\ell,m}}{L^2(\S)}:=\int_{\S}g^*(\x)Y_{\ell,m}(\x)\dsx.
\]
Consequently (see, e.g., \cite[Section 1.7]{HLM-15}), for every $\mu\in\R\backslash\{0\}$ one can define the Sobolev spaces $H^\mu(\S)$ equivalently as
\begin{equation}
 \label{eq-sob}
 H^\mu(\S):=\bigg\{g\in L^2(\S):[g]_{\sn{H}^\mu(\S)}^2:=\somma\ell^{2\mu}|g_{\ell,m}|^2<\infty\bigg\},
\end{equation}
or
\begin{equation}
 \label{eq-sob1}
 H^\mu(\S):=\left\{g\in L^2(\S):[g]_{\sn{H}^\mu(\S)}^2:=\big\|(-\Delta_{\S})^{\mu/2}g\big\|_{L^2(\S)}^2<\infty\right\},
\end{equation}
where $(-\Delta_{\S})^{\mu/2}$ can be easily deduced by \eqref{eq-diagLB}, endowed with the natural norm $\|g\|_{H^\mu(\S)}^2:=\|g\|_{L^2(\S)}^2+[g]_{\sn{H}^\mu(\S)}^2$. Note that in the following we often use the further equivalent norm $\|g\|_{H^\mu(\S)}^2:=\somma\langle\ell\rangle^{2\mu}|g_{\ell,m}|^2$, where $\langle\cdot\rangle$ denotes the \emph{japanese brakets} (i.e., $\langle\ell\rangle:=\sqrt{1+\ell^2}$).

However, there is also another definition of $H^\mu(\S)$, which reads as follows (see again \cite[Section 1.7]{HLM-15}). Let $U_1,U_2$ be two open sets of $\S$ containing the northern and the southern emispheres of $\S$, respectively, and let $\varphi_{j}:U_{j}\to B$, $j=1,2$,  be two smooth diffeomorphisms, where $B$ denotes the unit ball of $\R^2$. Then,
\begin{equation}
 \label{eq-equiv_sob}
 H^\mu(\S)=\left\{g\in L^2(\S):\pi_{j}[\chi_{j}g]\in H^\mu(\R^2),\: j=1,2\right\},
\end{equation}
with
\begin{equation}
 \label{eq-normlab}
 \|g\|_{H^\mu(\S)}^2:=\|\pi_1[\chi_1g]\|_{H^\mu(\R^2)}^2+\|\pi_2[\chi_2g]\|_{H^\mu(\R^2)}^2,
\end{equation}
where
\[
 \pi_{j}[v](\x):=\left\{
 \begin{array}{ll}
  v\big(\varphi_{j}^{-1}(\x)\big), & \text{if }\x\in B,\\[.2cm]
  0,                  & \text{otherwise}
 \end{array}
 \right.,\qquad j=1,2,
\]
and $\{\chi_1,\chi_2\}$ is a partition of the unity associated with the two emispheres of $\S$ and such that ${\rm supp}\{\chi_{j}\}\subset U_{j}$, $j=1,2$.  This definition does not depend on $U_{j}$, $\varphi_{j}$ or $\chi_j$, in the sense that different choices yield equivalent norms, and is equivalent to \eqref{eq-sob} and \eqref{eq-sob1} (see, e.g., \cite{HLM-15,LM-72}). However, it has the advantage that, using partitions of unity and change of coordinates by the diffeomorphisms $\varphi_1\,\varphi_2$, one can easily extend the usual embedding theorems for Sobolev spaces from $\R^2$ to $\S$ (see, e.g., \cite{DPV-12,HLM-15}). Moreover, it also allows to prove directly some classical Schauder estimates. More precisely, when $\mu>1$, recalling \eqref{eq-nonlinearity} and using
\beq
\label{infi}
\|g\|_{L^\infty(\S)} \leq c \|g \|_{H^\mu(\S)},\qquad\forall g\in H^\mu(\S),
\eeq
one can check that
\beq \label{schauder_true}
\| \nu (g) \|_{ H^{\mu}(\S) } \leq c_\mu\,\| g \|^{2\sigma}_{ L^\infty(\S) } \| g \|_{ H^{\mu}(\S) },\qquad\forall g\in H^\mu(\S),
\eeq
whenever $\sigma\geq\f{[\mu]}{2}$, whence
\beq \label{schauder}
\| \nu (g) \|_{ H^{\mu}(\S) } \leq c_\mu\,  \| g \|_{ H^{\mu}(\S) }^{2\sigma+1}
\eeq
 (the proof is reported in Appendix \ref{sec-schauder} for the sake of completeness).

\begin{rem}
 In the following we will equivalently use $L^2(\S)$ and $H^0(\S)$ in order to denote the space of the square integrable functions on the unit sphere of $\R^3$, since this does not give rise to misunderstandings.
\end{rem}

Finally, we recall that it is also possible to prove that definitions \eqref{eq-sob1} and \eqref{eq-equiv_sob} can be also extended to general $W^{\mu,p}(\S)$ spaces with $p\in(1,+\infty)$, i.e.
\begin{align}
\label{eq-sobp}
 W^{\mu,p}(\S):= & \left\{g\in L^p(\S):[g]_{\sn{W}^{\mu,p}(\S)}^p:=\big\|(-\Delta_{\S})^{\mu/2}g\big\|_{L^p(\S)}^p<\infty\right\}\\[.2cm]
 \label{eq-sobp2} = & \left\{g\in L^p(\S):\pi_{j}[\chi_{j}g]\in W^{\mu,p}(\R^2),\: j=1,2\right\}
\end{align}
(see, e.g., \cite[Sections I.$5$ and I.$6$]{T-81} and \cite{S-67,S-83,A-98}).


\subsection{Function spaces on $I\times\S$}
\label{subsec-function}

As the main focus of the paper is the study of the time--dependent problem \eqref{eq-cauchy}, it is also convenient to recall Sobolev spaces for functions of time and space, where the space variable varies on the unit spheres; that is, Sobolev spaces for functions $g:I\times\S\to\C$, with $I$ an interval of the real line.

Exploiting definitions mentioned in the previous section, one can immediately define, for every $\mu\in\R$ and $\alpha>0$ fixed,
\[
 L^2\big(I,H^\mu(\S)\big):=\bigg\{g\in L^2(I\times\S):[g]_{L^2(I,\sn{H}^\mu(\S))}^2:=\int_{I}[g(t)]_{\sn{H}^\mu(\S)}^2\dt<+\infty\bigg\},
\]
and
\begin{multline*}
 H^\alpha\big(I,H^\mu(\S)\big):=\\[.4cm]
 =\left\{
 \begin{array}{ll}
  \displaystyle \left\{g\in L^2\big(I,H^\mu(\S)\big):g^{(j)}\in L^2\big(I,H^\mu(\S)\big),\,j\in\{1,\dots,\alpha\}\right\}, & \text{if }\alpha\in\N\setminus\{0\},\\[.6cm]
  \displaystyle \left\{g\in H^{[\alpha]}\big(I,H^\mu(\S)\big):\big[g^{([\al])}\big]_{\sn{H}^{\al-[\alpha]}\big(I,H^\mu(\S)\big)}<+\infty\right\}, & \text{if }\alpha\not\in\N,
 \end{array}
 \right.
\end{multline*}
where
\[
 [g]_{\sn{H}^{\al-[\alpha]}(I,H^\mu(\S))}^2:=\int_{I\times I}\f{\|g(t)-g(s)\|_{H^\mu(\S)}^2}{|t-s|^{1+2(\alpha-[\alpha])}}\ds\dt
\]
and $g(t)=g(t,\cdot):\S\to\C$ denotes the function that one obtains fixing the value of $t$. Note that these are Hilbert spaces when endowed with the natural norm.

It is also worth mentioning that the order in which the seminorms are considered can be exchanged. That is, for instance,
\begin{multline*}
 [g]_{\sn{H}^\al(I,\sn{H}^\mu(\S))}^2=\\[.4cm]
 =\left\{
 \begin{array}{ll}
  \displaystyle \int_{I\times I}\f{[g(t)-g(s)]_{\sn{H}^\mu(\S)}^2}{|t-s|^{1+2\alpha}}\ds\dt=\sum_{\ell=0}^\infty\sum_{m=-\ell}^\ell \ell^{2\mu}[g_{\ell,m}]_{\sn{H}^\alpha(\R)}^2, & \text{if }\alpha\in(0,1),\\[.6cm]
  \displaystyle \bigg[\f{\partial g}{\partial t}\bigg]_{L^2(I,\sn{H}^\mu(\S))}^2=\int_I\left[\tf{\partial g}{\partial t}(t)\right]_{\sn{H}^\mu(\S)}^2\dt=\sum_{\ell=0}^\infty\sum_{m=-\ell}^\ell \ell^{2\mu}\|g'_{\ell,m}\|_{L^2(I)}^2, & \text{if }\alpha=1,
 \end{array}
 \right.
\end{multline*}
and similarly when $\alpha>1$, where $g_{\ell,m}:\R\to\C$ are the functions defined by
\[
 g_{\ell,m}(t):=\scal{g(t)}{Y_{\ell,m}}{L^2(\S)}. 
\]
Moreover, whenever $I=\R$, one may write time Sobolev regularity by means of the Fourier transform with respect to the time $t$, i.e.
\[
\widehat{g} (\ome, \x) := \frac{1}{\sqrt{2\pi}} \int_\R \exp^{-\i \ome t} g(t,\x) \dt,
\]
and thus
\begin{equation}
\label{eq-timef}
[g]_{\sn{H}^\al(\R,\sn H^\mu(\S))}^2 = \int_\R |\ome|^{2\al} [\widehat{g} (\ome) ]_{\sn{H}^\mu(\S)}^2 \, d\ome= \somma \ell^{2\mu}  \int_\R |\ome|^{2\al} |\widehat{g}_{\ell,m}(\ome)|^2 \, d\ome.
\end{equation}

Finally, we also recall that
\begin{gather*}
 C^0\big(I,H^\mu(\S)\big):=\gr{g:I\times\S\to\C: \lim_{t\to s} \|g(t) -g(s) \|_{H^\mu (\S)} =0,\: \forall s \in I }\\[.2cm]
 L^\infty(I,H^\mu(\S)):=\gr{g:I\times\S\to\C:\esup_{t\in I}\|g(t)\|_{H^\mu(\S)}<+\infty},
\end{gather*}
endowed with the natural norms. The definition of $C^n\big(I,H^\mu(\S)\big)$ and $W^{n,\infty}(I,H^\mu(\S))$ with $n\in\N$ is straightforward.


\subsection{Bessel functions and Bessel--Fourier transform}
\label{sec-BF}

The last reminder concerns Bessel functions and the interaction between the spherical harmonics decomposition and the Fourier transform of $\R^3$, i.e.
\[
\F g(\k):=\f{1}{(2\pi)^{\f{3}{2}}}\int_{\R^3}\exp^{-\imath\x\cdot\k}g(\x)\dx.
\]

First, recall the definition of the Bessel function of the first kind of order $\eta$ (see, e.g., \cite{NIST}):
\[
 J_\eta(t):=\bb{\f{t}{2}}^\eta\sum_{j=1}^\infty\f{(-1)^j}{\Ga(j+1)\Ga(j+\eta+1)}\bb{\f{t}{2}}^{2j}.
\]
Note that, when $\eta$ is real and positive, Bessel functions satisfy the following estimates:
\begin{gather} 
  |J_\eta(x)|\leq\f{c}{|x|^{\f{1}{3}}},\qquad\forall \eta>0,\,x\in\R \label{landau1}\\[.2cm]
  |J_\eta(x)|\leq \f{c}{\eta^{\f{1}{3}}} ,\qquad\forall \eta>0,\,x\in\R \label{landau2}
 \end{gather}
with $c$ independent of $\eta$ and $x$ (see \cite{L-00} and \cite[pag. 357]{S-93}). Note that, allowing an $\eta$--dependence of $c$ in \eqref{landau1}, one could establish a stronger estimate in $|x|$, i.e.
\begin{equation}
\label{eq-unuseful}
|J_\eta(x)|\leq\f{c_\eta}{|x|^{\f{1}{2}}},\qquad\forall x>0,
\end{equation}
with $c_1(\eta)$ with a power--like growth at infinity (see, e.g., \cite{O-06}). However, as the uniformity of the constant with respect to $\eta$ is one on the main tools used throughout the paper, we will always prefer \eqref{landau1} to \eqref{eq-unuseful}.

We recall, now, that the Fourier transform preserves the orthogonal decomposition given by $(Y_{\ell,m})_{\ell,m}$. More precisely, using the Jacobi--Anger expansion of the plane wave in $\R^3$, i.e.
\begin{equation}
 \label{eq-plainw}
 \exp^{\i\,\x\cdot\y}=(2\pi)^{\f{3}{2}}\b{|\x||\y|}^{-\f{1}{2}}\sum_{\ell=0}^\infty\i^\ell J_{\ell+\f{1}{2}}\b{|\x||\y|}\sum_{m=-\ell}^\ell Y_{\ell,m}\left(\f{\x}{|\x|}\right)Y_{\ell,m}^*\left(\f{\y}{|\y|}\right)
\end{equation}
(see, e.g., \cite{F-17}), one can prove that, whenever $g(\x) =\somma f_{\ell,m} (|\x|) Y_{\ell,m} (\x / |\x|)$, there results 
\beq \label{fourier1}
\F g(\k) = \somma (-\i)^\ell \widetilde{g}_{\ell,m} (|\k|) Y_{\ell,m} \left(\f{\k}{|\k|}\right),
\eeq
where
\[
 \widetilde{g}_{\ell,m} (k):= \int_0^{+\infty} \frac{J_{\ell+1/2}(kr)}{\sqrt{kr}}r^2 \, g_{\ell,m}(r)\,dr
\]
is, up to the extra factor $r^2/\sqrt{r}$, Bessel--Fourier transform of $f$, a.k.a. the Hankel transform of $g$ (see also \cite[Chapter IV.3]{SW-71}). By analogous computations one can also see that
\[
\F^{-1}\F g  (\x) = \somma \i^\ell \widetilde{g}_{\ell,m} (|\x|) Y_{\ell,m} \left(\f{\x}{|\x|}\right).
\]
In addition, exploiting the orthogonality of the Bessel functions in $L^2\big([0,+\infty),r\,dr\big)$ (for an easy proof see \cite{P-14}), one can also show that $\|\widetilde{g}_{\ell,m}\|_{L^2 ([0,+\infty),r^2\,dr)}=\|g_{\ell,m}\|_{L^2 ([0,+\infty),r^2\,dr)}$, i.e. the Bessel--Fourier transform is unitary on $L^2 \big([0,+\infty),r^2\,dr\big)$, and that $\widetilde{\widetilde{g}}_{\ell,m}=g_{\ell,m}$, i.e. the Bessel--Fourier transform is involutory.

Finally, we note that \eqref{fourier1} implies:
\begin{itemize}
 \item[(i)] $(\F g )_{\ell,m}(|\k|)=(-\imath)^\ell\widetilde{g}_{\ell,m}(|\k|)$ and, for every borel function $\varphi:\R\to\R$,
 \beq \label{nota}
  (\F \varphi(-\Delta)\, g )_{\ell,m} (|\k|) = (-\imath)^\ell \varphi(|\k|^2) \widetilde{g}_{\ell,m}(|\k|),
 \eeq
 where $\varphi(-\Delta)$ is defined by standard Functional Calculus;
 \item[(ii)] whenever $h:\S \mapsto \C$ is sufficiently smooth, as it can be identified with the measure $h\delta_{\S}$, its Fourier transform $\F h :=\F h\delta_{\S}  : \R^3\to \C$ is well defined and smooth and there results
\begin{equation}
\label{eq-rinota}
 (\F h )_{\ell, m} (|\k|)=(-\imath)^\ell\, \widetilde{h}_{\ell,m}=(-\imath)^\ell \frac{h_{\ell,m}}{\sqrt{|\k|}}\, J_{\ell+1/2} (|\k|),
\end{equation}
which amounts to \cite[Eq. $(3.5.91)$]{S-15}; in particular $\F \delta_{\S} (\k) = \frac{ J_{1/2} (|\k|)}{\sqrt{|\k|}}$ (see \cite[Thm. 3.5.13 and following Remark]{S-15}).
\end{itemize}


\section{Preliminary results}
\label{sec-prel}

In this section we establish some technical results, which are required in the proofs of Theorems \ref{main} and \ref{main2}:
\begin{itemize}
 \item[(i)] the mapping properties of the operator $\Lambda$ present in the charge equation \eqref{eq-charge}, and defined by \eqref{eq-Lambda};
 \item[(ii)] the regularity of the trace on $\S$ of the functions in the domain of the nonlinear map $\h$, defined by \eqref{eq-non_dom1} (or \eqref{eq-non_dom2});
 \item[(iii)] the regularity of the source term of \eqref{eq-charge}, defined by \eqref{source}.
\end{itemize}


\subsection{Properties of the operator $\Lambda$}
\label{sec-opLambda}

The behavior of the solutions of \eqref{eq-charge} is strongly affected by the features of the operator $\Lambda$ defined by \eqref{eq-Lambda}. In particular, for our purposes the most relevant ones are the mapping properties between the function spaces defined in Section \ref{subsec-function}.

As a first step, we establish an $L^p(\S)$--$L^q(\S)$ estimate for the family of operators $(I_t)_{t>0}$ defined by \eqref{eq-I}. Preliminarily, we note that, using the Jacobi--Anger expansion of the plane wave of $\R^3$ given by \eqref{eq-plainw}, it is possible to check that $I_t$ acts as a multiplication operator with respect to the decomposition in spherical harmonics. Precisely,
\beq \label{symbol}
\lf( I_t g \ri)_{\ell,m} = \rho(t,\ell) \, g_{\ell,m}, \qquad\text{with}\qquad \rho(t,\ell)= \f{(-\i)^{\ell+3/2}\,\exp^{\frac{\i}{2t}}}{2t}J_{\ell +1/2} \lf( \frac{1}{2t} \ri) .
\eeq
This is not surprising since the integral kernel $\I(t,\x,\y)$ only depends on $\x\cdot\y$ and such kernels are well known to give rise to convolution operators, which thus can be diagonalized by a suitable transform. In the following we will often refer to $\rho(t,\ell)$ as the \emph{symbol} of the operator $I_t$. Note also that such a symbol could be also computed by the Funk--Hecke formula (see, e.g., \cite{DX-13}).

\begin{lem}
\label{lem-disp}
Let $t>0$, $p\in[1,2]$ and $r\in[2,+\infty]$, with $p^{-1}+r^{-1}=1$. Then, there exists $c>0$, independent of $p$ and $r$, such that
\beq \label{disp}
\| I_t g \|_{L^r (\S)} \leq \frac{c}{t^{\de(p)}} \, \|g\|_{L^p (\S)}, \qquad\forall g\in L^p(\S), 
\eeq
with
\begin{equation}
\label{eq-deltap}
 \de(p) := \frac{5}{3p} -\f16.
\end{equation}
\end{lem}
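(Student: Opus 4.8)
The plan is to exploit the diagonalization \eqref{symbol}: since $I_t$ acts as the Fourier multiplier $g_{\ell,m}\mapsto \rho(t,\ell)g_{\ell,m}$ on $\S$, the operator is a convolution-type operator, and the natural route to a dispersive $L^p$--$L^r$ estimate is Riesz--Thorin interpolation between the two endpoint cases $p=r=2$ and $p=1,\,r=\infty$. First I would handle the $L^2$--$L^2$ endpoint: here $\|I_tg\|_{L^2(\S)}^2=\sum_{\ell,m}|\rho(t,\ell)|^2|g_{\ell,m}|^2$, so it suffices to bound $\sup_\ell|\rho(t,\ell)|$. From the explicit formula $\rho(t,\ell)=\tfrac{(-\i)^{\ell+3/2}\exp^{\i/2t}}{2t}J_{\ell+1/2}(1/2t)$ and the uniform Bessel bound \eqref{landau1}, namely $|J_{\ell+1/2}(1/2t)|\leq c(2t)^{1/3}$, we get $|\rho(t,\ell)|\leq \tfrac{c}{2t}(2t)^{1/3}=c\,t^{-2/3}$, uniformly in $\ell$. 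Since $\delta(2)=\tfrac{5}{6}-\tfrac16=\tfrac23$, this is exactly the claimed rate at $p=2$.

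**The $L^1$--$L^\infty$ endpoint.** For $p=1,\,r=\infty$ the estimate $\|I_tg\|_{L^\infty(\S)}\leq \tfrac{c}{t^{\delta(1)}}\|g\|_{L^1(\S)}$ with $\delta(1)=\tfrac53-\tfrac16=\tfrac32$ should follow from a pointwise bound on the kernel $\I(t,\x,\y)$. From \eqref{eq-kernI}, $|\I(t,\x,\y)|=\bigl|(4\i\pi t)^{-3/2}\bigr|=(4\pi t)^{-3/2}$, which is bounded by $c\,t^{-3/2}$ uniformly in $\x,\y\in\S$; hence $|I_tg(\x)|\leq (4\pi t)^{-3/2}\int_{\S}|g(\y)|\dsy=(4\pi t)^{-3/2}\|g\|_{L^1(\S)}$. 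This gives the $(1,\infty)$ endpoint with the correct power $t^{-3/2}$.

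**Interpolation and conclusion.** With the two endpoints in hand I would apply the Riesz--Thorin interpolation theorem (or rather its formulation for a family of linear operators on the fixed measure space $(\S,\mathrm{dS})$, with $t$ a fixed parameter) to interpolate between $(L^2\to L^2)$ with norm $\lesssim t^{-2/3}$ and $(L^1\to L^\infty)$ with norm $\lesssim t^{-3/2}$. For $p\in[1,2]$ write $\tfrac1p=\tfrac{1-\vartheta}{2}+\tfrac{\vartheta}{1}$, i.e. $\vartheta=\tfrac2p-1\in[0,1]$; the conjugate exponent $r$ satisfies $\tfrac1r=\tfrac{1-\vartheta}{2}+\tfrac{\vartheta}{\infty}=\tfrac{1-\vartheta}{2}$, consistent with $p^{-1}+r^{-1}=1$. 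The interpolated bound is then $\lesssim (t^{-2/3})^{1-\vartheta}(t^{-3/2})^{\vartheta}=t^{-[\,\tfrac23(1-\vartheta)+\tfrac32\vartheta\,]}$, and a direct computation of the exponent gives $\tfrac23+\tfrac56\vartheta=\tfrac23+\tfrac56(\tfrac2p-1)=\tfrac{5}{3p}-\tfrac16=\delta(p)$, matching \eqref{eq-deltap}. Since Riesz--Thorin yields a constant controlled by a product of the endpoint constants (hence independent of $p$ and $r$), the uniformity claim follows.

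**Main obstacle.** The only genuinely nontrivial ingredient is the uniform-in-$\ell$ control of the symbol at the $L^2$ endpoint, which rests entirely on the uniform Bessel estimate \eqref{landau1} (the constant $c$ there being independent of the order $\eta=\ell+1/2$); this is precisely why the paper insisted earlier on preferring \eqref{landau1} over the $\eta$-dependent \eqref{eq-unuseful}. The $(1,\infty)$ endpoint is immediate from the flat modulus of the kernel, and the interpolation step is standard, so no further difficulty is expected — one should only be slightly careful that Riesz--Thorin applies to complex-valued kernels on a finite measure space, which it does.
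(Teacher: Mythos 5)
Your proof is correct and follows essentially the same route as the paper: the $L^2\to L^2$ endpoint via the symbol bound $\sup_\ell|\rho(t,\ell)|\leq c\,t^{-2/3}$ coming from the $\ell$--uniform Bessel estimate \eqref{landau1}, the $L^1\to L^\infty$ endpoint from the flat modulus of the kernel \eqref{eq-kernI}, and Riesz--Thorin interpolation to conclude. The exponent arithmetic and the uniformity of the constant are also as in the paper.
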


\begin{proof}
First, combining \eqref{symbol} and \eqref{landau1}, there results
\beq \label{stimaL2}
\| I_t g \|_{L^2 (\S)} \leq \sup_{\ell\in \N} |\rho(t, \ell)| \|g\|_{L^2 (\S)} \leq  \frac{c}{t^{2/3}} \|g\|_{L^2 (\S)}.
\eeq
On the other hand, by \eqref{eq-kernI} it is straightforward that
\beq \label{stimaL1}
\| I_t g \|_{L^\infty (\S)} \leq  \frac{c}{t^{3/2}} \|g\|_{L^1 (\S)}.
\eeq
Thus, by the Riesz--Thorin theorem (see, e.g., \cite[Theorem 2.1]{LP-15}), one obtains \eqref{disp}.
\end{proof}

In addition, we can also prove that the operators $I_t$ display a regularizing effect with respect to the Sobolev spacial regularity.

\begin{lem}
\label{lem-sobI}
Let $t>0$, $\mu\in\R$ and $z\in[0,1]$. Then, there exists $c>0$, independent of $\mu$ and $z$, such that
\beq
\label{stimasobo}
\| I_t g \|_{H^{\mu +\frac{1-z}{3} } (\S)}  \leq  \frac{c}{t^{1 -\f{z}{3}}} \|g\|_{H^{\mu} (\S)}, \qquad\forall g\in H^\mu(\S).
\eeq
\end{lem}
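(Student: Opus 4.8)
The plan is to exploit the fact, recorded in \eqref{symbol}, that $I_t$ acts diagonally on the spherical harmonics basis with symbol $\rho(t,\ell)$, so that \eqref{stimasobo} reduces to a pointwise (in $\ell$) estimate on $\langle\ell\rangle^{\frac{1-z}{3}}|\rho(t,\ell)|$. Indeed, writing $g=\somma g_{\ell,m}Y_{\ell,m}$ and using the norm $\|h\|_{H^\nu(\S)}^2=\somma\langle\ell\rangle^{2\nu}|h_{\ell,m}|^2$, one has
\[
\|I_tg\|_{H^{\mu+\frac{1-z}{3}}(\S)}^2=\somma\langle\ell\rangle^{2\mu+\frac{2(1-z)}{3}}|\rho(t,\ell)|^2|g_{\ell,m}|^2\leq\Big(\sup_{\ell\in\N}\langle\ell\rangle^{\frac{1-z}{3}}|\rho(t,\ell)|\Big)^2\|g\|_{H^\mu(\S)}^2,
\]
so everything comes down to bounding that supremum by $c\,t^{-(1-z/3)}$ uniformly in $\mu$ and $z$.

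For the supremum I would split according to the size of $\ell$ relative to $1/(2t)$, and interpolate between the two Bessel estimates \eqref{landau1} and \eqref{landau2}. Recall $|\rho(t,\ell)|=\frac{1}{2t}\,|J_{\ell+1/2}(1/(2t))|$. From \eqref{landau2} we get $|\rho(t,\ell)|\leq \frac{c}{2t}\,(\ell+1/2)^{-1/3}$, which controls $\langle\ell\rangle^{1/3}|\rho(t,\ell)|\leq c/t$; this is exactly the endpoint $z=0$. From \eqref{landau1}, $|\rho(t,\ell)|\leq \frac{c}{2t}\,(2t)^{1/3}=c\,t^{-2/3}$, which gives $z=1$. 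The claim for intermediate $z\in[0,1]$ follows by writing $|\rho(t,\ell)|=|\rho(t,\ell)|^{z}|\rho(t,\ell)|^{1-z}$ and applying \eqref{landau1} to the first factor and \eqref{landau2} to the second: this yields
\[
\langle\ell\rangle^{\frac{1-z}{3}}|\rho(t,\ell)|\leq c\,\Big(\tfrac{1}{t}\Big)^{z}\Big(t^{-2/3}\Big)^{1-z}\langle\ell\rangle^{\frac{1-z}{3}}(\ell+\tfrac12)^{-\frac{1-z}{3}}\leq \frac{c}{t^{z+\frac{2}{3}(1-z)}}=\frac{c}{t^{1-\frac{z}{3}}},
\]
since $\langle\ell\rangle(\ell+1/2)^{-1}$ is bounded above uniformly in $\ell\in\N$ and all constants are $\eta$-independent by the remark following \eqref{landau2}. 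Uniformity in $\mu$ is automatic because $\mu$ only enters as the common weight $\langle\ell\rangle^{2\mu}$ on both sides.

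The only genuinely delicate point is making the interpolation rigorous without losing uniformity: one must be a little careful that the mixed estimate $|\rho(t,\ell)|^z|\rho(t,\ell)|^{1-z}$ is legitimate (it is, since it is the same nonnegative number raised to powers $z$ and $1-z$) and that the elementary inequality $\langle\ell\rangle\leq c\,(\ell+1/2)$ for all $\ell\in\N$ holds with an absolute constant (it does, with $c=2$, say, the worst case being $\ell=0$ where $\langle 0\rangle=1$ and $\ell+1/2=1/2$). I would also note that the case $\ell=0$ can alternatively be absorbed into the $L^2$ bound \eqref{stimaL2}, so no special treatment of low modes is needed. Once the supremum bound is in hand, summing over $\ell,m$ against $\langle\ell\rangle^{2\mu}|g_{\ell,m}|^2$ finishes the proof; this last step is routine.
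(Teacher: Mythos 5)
Your proof is correct and follows essentially the same route as the paper: diagonalize $I_t$ on spherical harmonics via \eqref{symbol} and interpolate between \eqref{landau1} and \eqref{landau2} by splitting $|\rho|=|\rho|^z|\rho|^{1-z}$, exactly as the paper does. The only blemish is that in your displayed interpolation the two $t$--factors are transposed --- applying \eqref{landau1} to the power--$z$ factor gives $(t^{-2/3})^z$ and \eqref{landau2} to the power--$(1-z)$ factor gives $\big(t^{-1}(\ell+\tfrac12)^{-1/3}\big)^{1-z}$, so the exponent is $\tfrac{2z}{3}+(1-z)=1-\tfrac z3$ rather than $z+\tfrac23(1-z)$ --- but your verbal description of the assignment and your final bound $c\,t^{-(1-z/3)}$ are the correct ones.
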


\begin{proof}
 First, interpolating \eqref{landau1} and \eqref{landau2}, there results
\[
| J_{\ell +1/2} (x)| \leq  \frac{c}{|x|^{\f{z}{3}} \langle \ell \rangle^{\f{1-z}{3}}  },
\]
with $c$ independent of $\mu$ and $z$. Hence
\beq \label{landau}
 | \rho(\ell, t)| \leq  \frac{c}{t^{1-\f{z}{3}} \langle \ell \rangle^{\f{1-z}{3}}  },
\eeq
which immediately implies \eqref{stimasobo}.
\end{proof}

Now, we discuss the properties of the operator $\L$ (defined by \eqref{eq-Lambda}) in the next three propositions.

\begin{prop}
 \label{prop-cont}
 Let $\mu\in\R$, $z\in(0,1]$, $r\in\big(\frac{3}{z},+\infty\big]$ and $T>0$. Then, there exists $C>0$, independent of $\mu$, $z$, $r$ and $T$, such that
\beq \label{completa}
 \|\L g \|_{L^\infty ([0,T],H^{\mu +\frac{1-z}{3} } (\S))} \leq C\bigg(\f{3(r-1)}{rz-3}\bigg)^{\f{r-1}{r}}T^{\frac{r-1}{r}-\f{3-z}{3} }\|g\|_{L^r ([0,T],H^\mu(\S))}
\eeq
for every $g\in L^r([0,T],H^\mu(\S))$. Moreover $\L g\in C^0 \big([0,T],H^{\mu +\frac{1-z}{3}} (\S)\big) $.
\end{prop}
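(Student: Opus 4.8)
The plan is to prove \eqref{completa} by writing $\L g(t,\x) = \int_0^t \big(I_{t-s}g(s,\cdot)\big)(\x)\,\ds$ and estimating the $H^{\mu+\frac{1-z}{3}}(\S)$--norm under the integral sign. First I would apply Lemma \ref{lem-sobI} to get, for fixed $s<t$,
\[
 \big\| I_{t-s}g(s,\cdot)\big\|_{H^{\mu+\frac{1-z}{3}}(\S)} \leq \f{c}{(t-s)^{1-\f{z}{3}}}\,\|g(s,\cdot)\|_{H^\mu(\S)},
\]
with $c$ independent of $\mu$ and $z$. Integrating in $s$ over $[0,t]$ and applying Hölder's inequality with exponents $r$ and $r'=\f{r}{r-1}$ gives
\[
 \|\L g(t,\cdot)\|_{H^{\mu+\frac{1-z}{3}}(\S)} \leq c\,\Big(\int_0^t (t-s)^{-(1-\f{z}{3})r'}\ds\Big)^{1/r'}\,\|g\|_{L^r([0,T],H^\mu(\S))}.
\]
The time integral $\int_0^t \tau^{-(1-z/3)r'}\,d\tau$ converges precisely when $(1-\f{z}{3})r'<1$, i.e. when $r>\f{3}{z}$, which is exactly the hypothesis $r\in(\f{3}{z},+\infty]$; evaluating it yields $\f{t^{1-(1-z/3)r'}}{1-(1-z/3)r'}$, and a short computation shows $1-(1-\f{z}{3})r' = \f{r'}{r}\big(\f{r-1}{r'}-(1-\f{z}{3})\big)$ type rearrangement gives the exponent $\f{r-1}{r}-\f{3-z}{3}$ on $T$ (after bounding $t\leq T$) and the constant $\big(\f{1}{1-(1-z/3)r'}\big)^{1/r'} = \big(\f{3(r-1)}{rz-3}\big)^{\f{r-1}{r}}$. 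Taking the supremum over $t\in[0,T]$ then delivers \eqref{completa}, with the constant $C$ coming only from $c$ and hence independent of $\mu,z,r,T$. The case $r=+\infty$ is handled separately (or as a limit), with $r'=1$ and the integral $\int_0^t (t-s)^{-(1-z/3)}\ds = \f{t^{z/3}}{z/3}$ converging for all $z\in(0,1]$.

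For the continuity statement $\L g\in C^0([0,T],H^{\mu+\frac{1-z}{3}}(\S))$, I would argue by a density/approximation argument: first establish continuity when $g$ is a nice function, say $g\in C^0([0,T],H^\mu(\S))$ (or even a simple function valued in $H^\mu(\S)$), by splitting $\L g(t+h,\cdot)-\L g(t,\cdot)$ into the ``new'' piece $\int_t^{t+h} I_{t+h-s}g(s,\cdot)\,\ds$, which is $O(|h|^{z/3})$ in $H^{\mu+\frac{1-z}{3}}(\S)$ by the same estimate as above, and the ``old'' piece $\int_0^t \big(I_{t+h-s}-I_{t-s}\big)g(s,\cdot)\,\ds$, whose integrand tends to $0$ in $H^{\mu+\frac{1-z}{3}}(\S)$ as $h\to0$ for a.e.\ $s$ (by continuity of $\tau\mapsto I_\tau$ away from $0$, which follows from the smoothness in $\tau$ of the symbol $\rho(\tau,\ell)$ together with the uniform-in-$\ell$ bound \eqref{landau}) and can be dominated near $s=t$ using an integrable majorant; dominated convergence then gives $\to 0$. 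Finally, since \eqref{completa} shows $\L:L^r([0,T],H^\mu(\S))\to L^\infty([0,T],H^{\mu+\frac{1-z}{3}}(\S))$ is bounded and $C^0([0,T],H^\mu(\S))$ is dense in $L^r([0,T],H^\mu(\S))$ for $r<\infty$, the continuity of $\L g$ for general $g$ follows by a standard uniform-limit argument (the uniform limit of continuous $H^{\mu+\frac{1-z}{3}}(\S)$-valued functions is continuous).

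I expect the routine part — the Hölder estimate and the explicit constant bookkeeping — to be straightforward, modulo care in tracking the exponents so that the stated powers of $T$ and the stated constant $\big(\f{3(r-1)}{rz-3}\big)^{\f{r-1}{r}}$ come out exactly. The main obstacle will be the continuity claim: one must be a little careful that the ``old piece'' integrand is genuinely dominated by an integrable function uniformly in small $h$ near the singularity $s=t$, and that the map $\tau\mapsto I_\tau$ is strongly continuous on $(0,\infty)$ into the relevant operator norm; both follow from \eqref{landau} and the explicit form of $\rho(\tau,\ell)$, but they need to be invoked cleanly. The endpoint $r=+\infty$ and the borderline behavior as $r\downarrow \f{3}{z}$ (where the constant blows up, consistently with the statement) also deserve a remark but cause no real difficulty.
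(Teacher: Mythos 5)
Your derivation of \eqref{completa} is exactly the paper's: Lemma \ref{lem-sobI} under the integral sign followed by H\"older in time, with the same bookkeeping producing the constant $\big(\f{3(r-1)}{rz-3}\big)^{\f{r-1}{r}}$ and the exponent $\f{r-1}{r}-\f{3-z}{3}$ (your computation $1-(1-\f{z}{3})r'=\f{rz-3}{3(r-1)}$ checks out, and the endpoint $r=+\infty$ is handled the same way). Where you genuinely diverge is the continuity claim. The paper does \emph{not} use your naive splitting $\int_t^{t+h}+\int_0^t(I_{t+h-s}-I_{t-s})$; instead it changes variables to write
\[
 \L g(t+h)-\L g(t)=\int_0^h I_{t+h-s}\,g(s)\ds+\int_0^t I_{t-s}\big(g(s+h)-g(s)\big)\ds,
\]
estimates both pieces by the already--proved H\"older bound, and concludes for \emph{general} $g\in L^r$ in one stroke from the absolute continuity of the integral and the mean continuity (continuity of translations) in the Bochner space $L^r([0,T],H^\mu(\S))$. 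Your route — strong continuity of $\tau\mapsto I_\tau$ away from $\tau=0$ plus a dominated--convergence argument for continuous $g$, then density of $C^0$ in $L^r$ combined with the boundedness $\L:L^r\to L^\infty$ — is also valid, but it costs you two extra verifications the paper avoids: the uniform integrable majorant near $s=t$ for the ``old'' piece (fine for $h>0$ since $(t+h-s)^{-(1-z/3)}\leq(t-s)^{-(1-z/3)}$, but left--continuity needs the roles of $t$ and $t+h$ swapped), and a separate remark that the case $r=+\infty$, where density fails, reduces to a finite $r>\f{3}{z}$ because $[0,T]$ is bounded. The paper's translation trick buys uniformity over all $g\in L^r$ at the price of invoking mean continuity for vector--valued $L^r$, which it justifies in a remark; your version is more elementary in its ingredients but longer. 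Both are correct.
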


\begin{proof}
Using \eqref{stimasobo}, one sees that for a.e. $t\in(0,T]$
\begin{equation}
\label{eq-stimautile}
\| \L g(t) \|_{H^{\mu +\frac{1-z}{3} } (\S)} \leq\int_0^t  \| I_{t-s} g(s) \|_{H^{\mu +\frac{1-z}{3} } (\S)} ds \leq c \int_0^t  \frac{1}{|t-s|^{1 -z/3}} \|g(s)\|_{H^{\mu} (\S)} ds.
\end{equation}
Thus, \eqref{completa} follows from H\"older inequality.

It is then left to prove that $\L g$ is continuous in $t$ with values in $H^{\mu +\frac{1-z}{3}}$. First, easy computations yield
\[
 \L g(t+h,\x)-\L g(t,\x)=\int_0^h\big(I_{t+h-s}g(s)\big)(\x)\ds+\int_0^t\big(I_{t-s}(g(s+h)-g(s))\big)(\x)\ds.
\]
Hence, arguing as before, one finds that
\begin{multline*}
 \|\L g(t+h)-\L g(t) \|_{H^{\mu +\frac{1-z}{3} } (\S)}\\[.2cm]
 \leq C\bigg(\f{3(r-1)}{rz-3}\bigg)^{\f{r-1}{r}}\bigg\{\left[(t+h)^{\frac{r-1}{r}-\f{3-z}{3}}-t^{\frac{r-1}{r}-\f{3-z}{3}}\right]\|g\|_{L^r ([0,h],H^\mu(\S))}+\\[.2cm]
 +t^{\frac{r-1}{r}-\f{3-z}{3}}\|g(\cdot+h)-g\|_{L^r ([0,t],H^\mu(\S))}\bigg\}.
\end{multline*}
Therefore, since when $h\to0$ the former term in the curly brakets converges to zero by the continuity of the powers and the absolute continuity of the Lebesgue integral and the latter converges to zero by the \emph{mean continuity property}, the claim is proved.
\end{proof}

\begin{rem}
\label{rem-esponenti}
 Note that by the assumptions on $z$ and $r$, $\f{3(r-1)}{rz-3}$ and, above all, $\frac{r-1}{r}-\f{3-z}{3} $ are positive. 
\end{rem}

\begin{rem}
 The mean continuity property is a well known property of $L^p$--spaces of functions with real values (see, e.g., \cite{O-71}). However, it can be easily generalized to \emph{Bochner spaces} using the density of smooth functions with respect to time (see, e.g., \cite[Section $5.2.9$]{E-10}).
\end{rem}

\begin{rem}
 As they play a crucial role throughout the paper, we single out the extremal cases of Proposition \ref{prop-cont}, i.e. $r=+\infty$ and $z=1$:
\beq
 \label{eq-inf_contr}
  \|\L g \|_{C^0 ([0,T],H^{\mu +\frac{1-z}{3} } (\S))}\leq \f{3C}{z}\, T^{\f{z}{3}}\|g\|_{L^\infty([0,T],H^\mu(\S))},
\eeq
and, for $r>3$,
\beq
 \|\L g \|_{C^0 ([0,T],H^{\mu  } (\S))}\leq C\bigg(\f{3r-3}{r-3}\bigg)^{\f{r-1}{r}} T^{\f{r-3}{3r}}\|g\|_{L^r ([0,T],H^\mu(\S))}. \label{alternativa}
\eeq
Clearly, combining the two results one obtains
\begin{equation}
 \label{eq-final}
 \|\L g \|_{C^0 ([0,T],H^{\mu} (\S))}\leq 3CT^{1/3}\|g\|_{L^\infty([0,T],H^\mu(\S))}.
\end{equation}
\end{rem}

\begin{prop}
 \label{prop-reg1}
 Let $\al,\,\mu\geq0$ and $T>0$. If $g \in H^\al\big(\R, H^\mu(\S)\big)$, with $\mathrm{supp}\{g\}\subseteq[0,T]\times\S$, then $\L g \in L_{loc}^2\big(\R, H^\mu(\S)\big)\cap \sn H^{\al+1/7}\big(\R, H^\mu(\S)\big)$.
\end{prop}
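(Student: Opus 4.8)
The plan is to analyze $\L g$ by splitting the claim into its two regularity assertions and to work as much as possible on the level of the spherical harmonic components, so that everything reduces to one--dimensional convolution estimates with the kernel $\rho(t,\ell)$. Write $g=\somma g_{\ell,m}(t)\,Y_{\ell,m}$ with $\mathrm{supp}\{g\}\subseteq[0,T]\times\S$, and recall from \eqref{symbol} that $(\L g)_{\ell,m}(t)=\int_0^t \rho(t-s,\ell)\,g_{\ell,m}(s)\ds$, i.e. $(\L g)_{\ell,m}=\rho(\cdot,\ell)\mathds{1}_{(0,\infty)}\ast g_{\ell,m}$ on $\R$ once we extend $g_{\ell,m}$ by zero. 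The first, easy part is the $L^2_{loc}(\R,H^\mu(\S))$ membership: since $g\in H^\al(\R,H^\mu(\S))\hookrightarrow L^2(\R,H^\mu(\S))$ and is supported in $[0,T]$, on any bounded interval $[0,T']$ with $T'\geq T$ one has by \eqref{eq-final} (the $r=\infty,z=1$ case of Proposition \ref{prop-cont}, applied with $g$ replaced by $g$ restricted to $[0,T']$, using $\|g\|_{L^\infty}$ bounded — or more cleanly by the estimate \eqref{eq-stimautile} followed by Young's inequality for the convolution of the integrable weight $|t|^{-1+z/3}\mathds{1}_{[0,T']}$ with $L^2$) that $\|\L g\|_{L^2([0,T'],H^\mu(\S))}\leq C(T')\|g\|_{L^2([0,T'],H^\mu(\S))}<\infty$, and for $t$ larger we simply note the integral still runs only over $s\in[0,T]$, so $\L g$ is controlled on every bounded interval; hence $\L g\in L^2_{loc}$.

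The substantive part is the time--regularity gain, $\L g\in \sn H^{\al+1/7}(\R,H^\mu(\S))$. Here I would pass to the time Fourier transform as in \eqref{eq-timef}: by the convolution theorem, $\widehat{(\L g)}_{\ell,m}(\ome)=\sqrt{2\pi}\,\widehat{k_\ell}(\ome)\,\widehat{g}_{\ell,m}(\ome)$, where $k_\ell(t):=\rho(t,\ell)\mathds{1}_{(0,\infty)}(t)$. By \eqref{eq-timef} it then suffices to show a uniform (in $\ell$) bound of the form
\begin{equation*}
 |\ome|^{1/7}\,|\widehat{k_\ell}(\ome)|\leq C,\qquad\forall\,\ome\in\R,\ \ell\in\N,
\end{equation*}
for then $|\ome|^{2(\al+1/7)}|\widehat{(\L g)}_{\ell,m}(\ome)|^2\leq 2\pi C^2\,|\ome|^{2\al}|\widehat{g}_{\ell,m}(\ome)|^2$, and summing $\langle\ell\rangle^{2\mu}$ times this over $\ell,m$ and integrating in $\ome$ gives $[\L g]_{\sn H^{\al+1/7}(\R,\sn H^\mu(\S))}\leq\sqrt{2\pi}\,C\,[g]_{\sn H^\al(\R,\sn H^\mu(\S))}$. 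So the whole matter is reduced to a pointwise decay estimate on $\widehat{k_\ell}$. To get it, recall $\rho(t,\ell)=\frac{(-\i)^{\ell+3/2}\exp^{\i/(2t)}}{2t}J_{\ell+1/2}(1/(2t))$; substituting $u=1/(2t)$ one sees $\widehat{k_\ell}(\ome)$ is, up to constants, $\int_0^\infty u^{-1}\exp^{\i u}J_{\ell+1/2}(u)\exp^{-\i\ome/(2u)}\,du$. The key input is the uniform Bessel bound $|J_{\ell+1/2}(u)|\leq c\,|u|^{-1/3}$ from \eqref{landau1} (with $c$ independent of $\ell$), which already makes the integrand $O(u^{-4/3})$ at infinity; near $u=0$ the factor $u^{\ell+1/2}$ from the series expansion of the Bessel function kills the $u^{-1}$. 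One then estimates the oscillatory integral: split at $|u|\sim|\ome|^{a}$ for a suitable exponent $a$, bound the high--$u$ piece trivially using $u^{-4/3}$ integrability (giving a gain like $|\ome|^{-a/3}$), and on the low--$u$ piece integrate by parts once in $u$ exploiting the oscillation $\exp^{-\i\ome/(2u)}$ whose phase has derivative $\sim\ome/u^2$ (giving a gain in negative powers of $\ome$), and optimize $a$; the bookkeeping yields a net decay $|\ome|^{-1/7}$ uniformly in $\ell$. (The precise exponent $1/7$ is exactly what this split--and--optimize produces; the role of $\al\geq0$ is only to make $g$ at least $L^2$ in time so the statement is non--vacuous, and $\mu\geq0$ plays no role in this part.)

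The main obstacle — and the only place real work is needed — is precisely this uniform oscillatory--integral bound on $\widehat{k_\ell}(\ome)$, because one must simultaneously (i) keep the constant independent of $\ell$, which forbids using the sharper but $\ell$--dependent decay \eqref{eq-unuseful} and restricts us to \eqref{landau1}, and (ii) handle both the singularity of $1/t$ at $t=0^+$ (equivalently $u\to\infty$) and the oscillation of $\exp^{\i/(2t)}$, which is fast near $t=0$ and interacts with the Fourier phase $\exp^{-\i\ome t}$. I would treat the endpoint contributions carefully (the function $k_\ell$ is smooth on $(0,\infty)$ and decays, with a controlled, $\ell$--independent singularity structure at the origin coming from $\rho(t,\ell)=O(t^{-2/3})$ as $t\to0^+$ by \eqref{landau1} and rapid decay as $t\to\infty$ by \eqref{landau2}), so that the integration by parts produces no boundary terms and all error terms are again governed by \eqref{landau1}. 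Once the bound $|\ome|^{1/7}|\widehat{k_\ell}(\ome)|\leq C$ is in hand, the assembly via \eqref{eq-timef} is immediate and, together with the $L^2_{loc}$ part above, completes the proof.
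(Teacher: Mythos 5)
Your proposal is correct and follows essentially the same route as the paper: the $L^2_{loc}$ part via the componentwise kernel bound $|H(t-s)\rho(t-s,\ell)|\leq c|t-s|^{-2/3}$ (the paper uses a Schur test where you use Young's inequality, which is equivalent here — and note that your first alternative via \eqref{eq-final} would require $g$ to be $L^\infty$ in time, which fails for $\al<1/2$, so your Young's-inequality version is the right one), and the $\sn H^{\al+1/7}$ part via the reduction to the uniform multiplier bound $|\widehat{H\rho(\cdot,\ell)}(\ome)|\lesssim|\ome|^{-1/7}$. The paper proves that bound by precisely your split--plus--integration--by--parts--plus--optimization on the oscillatory integral, splitting at $t=a=|\ome|^{-4/7}$, which corresponds to your cutoff $|u|\sim|\ome|^{3/7}$ after the substitution $u=\ome t$.
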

\begin{proof}
We can split the proof in two parts.

\emph{Part (i): $\L g \in L_{loc}^2\big(\R, H^\mu(\S)\big)$}. It is sufficient to fix arbitrary $T_1<T_2$ and prove that $\L g \in L^2\big([T_1,T_2], H^\mu(\S)\big)$. By \eqref{symbol}, we have
\begin{equation}
\label{eq-Lambdadec}
\lf(\L g (t) \ri)_{\ell, m} = \int_0^t \rho(t-s,\ell) g_{\ell,m}(s) \ds.
\end{equation}
As a first consequence, since $\mathrm{supp}\{g\}\subseteq[0,T]\times\S$, when $T_2\leq0$ the claim is straightforward. Fix, then, $T_2>0$. Using again that $\mathrm{supp}\{g\}\subseteq[0,T]\times\S$, on can write $\lf(\L g (t) \ri)_{\ell, m}$ as
\begin{equation}
\label{eq-lambdaA}
\lf(\L g (t) \ri)_{\ell, m} = 
\int_0^{T_2} H(t-s) \rho (t-s, \ell) g_{\ell,m}(s) \ds,\qquad\forall t\in[0,T_2]
\end{equation}
where $H(t)$ is Heavyside function. Now, observing that, by \eqref{landau1}, $|H(t-s) \rho (t-s, \ell)|\leq c|t-s|^{-2/3}$, with $c$ independent of $\ell$ and $m$, one can use the Schur test between $L^2(0,T_2)$ and $L^2(T_1,T_2)$ (see, e.g., \cite{HS-78}), with test functions identically equal to 1, to get that
\beq 
\| \lf(\L g (\cdot) \ri)_{\ell, m} \|_{L^2(T_1,T_2)} \leq  c_{T_1,T_2} \| g_{\ell,m}\|_{L^2(0,T_2)},
\eeq
with $c_{T_1,T_2}$ still independent of $\ell$ and $m$. As a consequence,
\begin{align*}
\| \L g \|_{ L^2([T_1,T_2], H^\mu(\S) ) }^2 &= \int_{T_1}^{T_2} \somma \langle \ell \rangle^{2\mu} \lf| \lf(\L g (t) \ri)_{\ell, m}\ri|^2 \dt  \\
&  = \somma  \langle \ell \rangle^{2\mu} \int_{T_1}^{T_2} \lf| \lf(\L g (t) \ri)_{\ell, m}\ri|^2 \dt \\
&  \leq c_{T_1,T_2}^2\somma \langle  \ell \rangle^{2\mu}\| g_{\ell,m}\|_{L^2(0,T_2)}^2 \\
& \leq c_{T_1,T_2}^2 \| g \|_{L^2([0,T_2], H^\mu (\S)}^2<+\infty.
\end{align*}

\emph{Part (ii): $\L g \in \sn H^{\al+1/7}\big(\R, H^\mu(\S)\big)$}. Combining \eqref{eq-lambdaA} with $T_2=T$ and the fact that $\mathrm{supp}\{g\}\subseteq[0,T]$, there results that
\[
\lf(\L g (t) \ri)_{\ell, m} = \big((H\rho(\cdot,\ell)) \ast g_{\ell,m} \big) (t),
\]
so that the Fourier transform with respect to time reads
\beq \label{fouprod}
\widehat{ \lf(\L g(\cdot)  \ri)_{\ell, m} }(\ome) = \widehat{H \rho(\cdot,\ell)}(\ome) \, \widehat{  g_{\ell,m} }(\ome).
\eeq
Let us discuss $\widehat{H \rho(\cdot,\ell)}$. By definition
\begin{equation}
\label{eq-trasfH}
 \widehat{H \rho(\cdot,\ell)}(\ome) = \frac{(-\i)^{\ell+3/2}}{\sqrt{8\pi}}\int_0^{+\infty} \exp^{-\i \ome t} \exp^{\i/2t} J_{\ell+1/2} \lf( \frac{1}{2t}\ri) \frac{\dt}{t}
\end{equation}
and thus
\[
 |\widehat{H \rho(\cdot,\ell)}(\ome)|\leq c\bigg(\underbrace{\int_0^{1}\bigg|J_{\ell+1/2} \lf( \frac{1}{2t}\ri)\bigg| \frac{\dt}{t}}_{=:I_1}+\underbrace{\int_1^{+\infty}\bigg|J_{\ell+1/2} \lf( \frac{1}{2t}\ri)\bigg| \frac{\dt}{t}}_{=:I_2}\bigg).
\]
Now, by \eqref{landau1}, it is straightforward that $I_1\leq c$, with $c$ independent of $\ell$. On the other hand, since from \cite[Eq. $10.9.4$]{NIST} there results that
\[
 \left|J_{\ell+1/2}\left( \frac{1}{2t}\right)\right|\leq\f{c}{4^{\ell+1/2}(\ell+1)!\,t^{\ell+1/2}},
\]
one can check that $I_2\leq c$, with $c$ independent of $\ell$, as well, and thus
\begin{equation}
 \label{eq-Hbound}
 |\widehat{H \rho(\cdot,\ell)}(\ome)|\leq c,\qquad\forall\ome\in\R.
\end{equation}
On the other hand, an easy change of variable yields
\[
 \widehat{H \rho(\cdot,\ell)}(\ome) = \frac{(-\i)^{\ell+3/2}}{\sqrt{8\pi}}\int_0^{+\infty} \exp^{\i \ome t} \exp^{-\i/2t} J_{\ell+1/2} \lf( \ome t \ri) \frac{\dt}{t},
\]
so that, for any fixed $a>0$, $\widehat{H \rho(\cdot,\ell)}(\ome)=\varphi_1(\omega)+\varphi_2(\omega)$, with
\[
\begin{array}{l}
\displaystyle \varphi_1(\ome) := \frac{(-\i)^{\ell+3/2}}{\sqrt{8\pi}}\int_0^a \exp^{\i \ome t} \exp^{-\i/2t} J_{\ell+1/2} \lf( \ome t \ri)\frac{\dt}{t},\\[.6cm]
\displaystyle \varphi_2(\ome): = \frac{(-\i)^{\ell+3/2}}{\sqrt{8\pi}}\int_a^{+\infty} \exp^{\i \ome t} \exp^{-\i/2t} J_{\ell+1/2} \lf( \ome t \ri)\frac{\dt}{t}.
\end{array}
\]
Using again \eqref{landau1}, one immediately obtains
\beq \label{g_2}
|\varphi_2(\ome) |\leq \f{c}{(a|\ome|)^{1/3}}.
\eeq
Moreover, integrating by parts, since $(-2\imath \exp^{-\imath/2t})'=\f{\exp^{-\imath/2t}}{t^2}$, we have 
\begin{align}
\int_0^a \exp^{\i \ome t} \exp^{-\i/2t} J_{\ell+1/2} \lf( \ome t \ri)\frac{\dt}{t} =& -2\i a \exp^{-\i/(2a) +\i \ome a} J_{\ell +1/2}(\ome a) \label{pezzo1} \\
& +2\i \int^a_0 \exp^{\i \ome t} \exp^{-\i/2t} J_{\ell+1/2} \lf( \ome t \ri)\dt \label{pezzo2} \\
& -2\ome \int^a_0\exp^{\i \ome t} \exp^{-\i/2t} t\, J_{\ell+1/2} \lf( \ome t \ri)\dt \label{pezzo3}\\
& +2\i\ome \int^a_0\exp^{\i \ome t} \exp^{-\i/2t} t\, J_{\ell+1/2}' \lf( \ome t \ri)\dt \label{pezzo4}.
\end{align}
and thus $\varphi_1=\varphi_1^1+\varphi_1^2+\varphi_1^3+\varphi_1^4$, with $\varphi_1^j$ given by the multiplication between $\frac{(-\i)^{\ell+3/2}}{\sqrt{8\pi}}$ and the r.h.s. of \eqref{pezzo1}, \eqref{pezzo2}, \eqref{pezzo3} and \eqref{pezzo4}, respectively. By \eqref{landau2}, one immediately sees that
\[
 |\varphi_1^1(\ome)|\leq ca,\quad |\varphi_1^2(\ome)|\leq ca\quad\text{and}\quad|\varphi_1^3(\ome)|\leq c|\ome| a^2,\qquad\forall \ome\in\R,
\]
while, in order to estimate $\varphi_1^4$ one first recalls that
\[
2  J_{\ell+1/2}'=  J_{\ell+3/2}-  J_{\ell-1/2}
\]
(see, e.g., \cite[Eq. $8.471.2$]{GR-07}) and then argues as before obtaining
\[
 |\varphi_1^4(\ome)|\leq c|\ome| a^2,\qquad\forall \ome\in\R.
\]
Therefore, setting $a=|\ome|^{-4/7}$, one finds that
\[
 |\widehat{H \rho(\cdot,\ell)}(\ome)|\leq c\bigg(\f{1}{|\ome|^{4/7}}+\f{1}{|\ome|^{1/7}}\bigg),\qquad\forall \ome\in\R,
\]
and thus, combining with \eqref{eq-Hbound}, there results
\begin{equation}
\label{eq-stimebuone}
  |\widehat{H \rho(\cdot,\ell)}(\ome)|\leq \f{c}{|\ome|^{1/7}},\qquad\forall \ome\in\R.
\end{equation}
Summing up,
\begin{align*}
[\L g]_{\sn H^{\al+1/7}(\R, H^\mu(\S)) }^2
&=  \somma \langle \ell \rangle^{2\mu} [(\L g(\cdot))_{\ell,m}]_{\sn  H^{\al+1/7}(\R) }^2 \\
&= \somma \langle \ell \rangle^{2\mu} \int_\R |\ome|^{2(\al+1/7)} \lf| \widehat{ (\L g(\cdot))_{\ell,m}}(\ome) \ri|^2\dome \\
&\leq c \somma \langle \ell \rangle^{2\mu} \int_\R |\ome|^{2\al} \lf| \widehat{  g_{\ell, m} }(\ome) \ri|^2 \dome
 = c\somma \langle \ell \rangle^{2\mu}  [ g_{\ell,m}]_{ H^{\al}(\R) }^2 \\
&= c\, [g]_{\sn H^{\al}(\R, H^\mu(\S)) }^2<+\infty,
\end{align*}
which completes the proof.
\end{proof}

\begin{prop}
 \label{prop-reg2}
 Let $\al,\,\mu\geq0$ and $T>0$. Let also $g \in H^\al\big([0,T], H^\mu(\S)\big)$. Then, whenever one of the following conditions is satisfied:
\begin{itemize}
\item[{(i)}] $0\leq\al<1/2$,
\item[{(ii)}] $1/2< \al <3/2$ and $g(0)=g(T)=0$,
\end{itemize}
there results that $\L g \in H^{\al+1/7}\big([0,T], H^\mu(\S)\big)$. Moreover, in both cases, there exists $c(T)>0$, only depending $T$, such that
\[
\|\L g\|_{ H^{\al+1/7}([0,T], H^\mu(\S)) }\leq c(T) \| g\|_{ H^{\al}([0,T], H^\mu(\S)) }.
\]
\end{prop}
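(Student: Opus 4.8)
The plan is to deduce Proposition~\ref{prop-reg2} from Proposition~\ref{prop-reg1} by an extension argument that converts functions on $[0,T]$ into compactly supported functions on $\R$ to which Proposition~\ref{prop-reg1} applies, while checking that the operator $\L$ is (essentially) insensitive to the extension. First I would observe that $\L g(t)$ for $t\in[0,T]$ only depends on the values of $g$ on $[0,t]\subseteq[0,T]$, because of the causal structure in \eqref{eq-Lambda} (the integral runs over $[0,t]$); hence if $\widetilde g$ is any extension of $g$ to $\R$ with $\mathrm{supp}\{\widetilde g\}\subseteq[0,T]\times\S$ (in the sense that $\widetilde g$ vanishes for $t<0$), then $\L\widetilde g$ restricted to $[0,T]$ coincides with $\L g$. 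So it suffices to produce an extension $\widetilde g\in H^\al\big(\R,H^\mu(\S)\big)$ of $g$ with support in $[0,T]\times\S$ and with $\|\widetilde g\|_{H^\al(\R,H^\mu(\S))}\leq c(T)\|g\|_{H^\al([0,T],H^\mu(\S))}$, and then invoke Proposition~\ref{prop-reg1} to get $\L\widetilde g\in \sn H^{\al+1/7}\big(\R,H^\mu(\S)\big)$, and finally restrict back to $[0,T]$, noting that $L^2_{loc}(\R,H^\mu(\S))\cap \sn H^{\al+1/7}(\R,H^\mu(\S))$ restricts continuously into $H^{\al+1/7}([0,T],H^\mu(\S))$ (the $L^2$ part is automatic and the homogeneous seminorm on $[0,T]$ is dominated by the one on $\R$).

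The heart of the matter is therefore the existence of such a support-preserving extension, and this is exactly where the dichotomy between cases (i) and (ii) enters. In case (i), $0\le\al<1/2$, one can extend simply by zero outside $[0,T]$: it is classical that for $\al<1/2$ the zero-extension operator $H^\al([0,T])\to H^\al(\R)$ is bounded (no compatibility condition at the endpoints is needed, since the Gagliardo seminorm picks up no boundary contribution below the threshold $1/2$). Applying this coordinatewise in the spherical-harmonic expansion, using the characterization \eqref{eq-timef} and the fact that $[g]^2_{\sn H^\al(\R,\sn H^\mu(\S))}=\somma \ell^{2\mu}[g_{\ell,m}]^2_{\sn H^\al(\R)}$, yields the desired bounded extension $H^\al([0,T],H^\mu(\S))\to H^\al(\R,H^\mu(\S))$ with support in $[0,T]$. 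In case (ii), $1/2<\al<3/2$ with $g(0)=g(T)=0$, the zero-extension is again bounded precisely because the vanishing of the trace at the endpoints is the obstruction to $H^\al$-regularity of the naive extension when $1/2<\al<3/2$; so the same coordinatewise argument applies once one knows $g(0)=g(T)=0$ forces $g_{\ell,m}(0)=g_{\ell,m}(T)=0$ for each $\ell,m$, which is immediate from continuity of the trace map on $H^\al\ni\al>1/2$ and from $g\in C^0([0,T],H^\mu(\S))$ by Sobolev embedding in time.

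The main obstacle I anticipate is not any single estimate but keeping the constants and the $\ell$-uniformity honest: Proposition~\ref{prop-reg1}'s gain of $1/7$ derivatives comes from the uniform-in-$\ell$ bound \eqref{eq-stimebuone} on $\widehat{H\rho(\cdot,\ell)}$, so one must make sure the extension step does not secretly reintroduce an $\ell$-dependence — which it does not, since the zero-extension bound in the time variable is the same for every Fourier mode $g_{\ell,m}$ and the $\langle\ell\rangle^{2\mu}$ weights are merely summed. A secondary technical point is the endpoint $\al=1/2$, deliberately excluded in both (i) and (ii): there the zero-extension is not bounded, which is why the statement has a gap at $1/2$; I would simply note this and not attempt to cover it. Finally, once $\L\widetilde g$ is obtained on $\R$, the restriction to $[0,T]$ must be shown to land in the \emph{inhomogeneous} space $H^{\al+1/7}([0,T],H^\mu(\S))$; this follows because $\L\widetilde g\in L^2_{loc}$ handles the $L^2$ part on the bounded interval $[0,T]$ and the fractional seminorm over $[0,T]\times[0,T]$ is bounded above by the one over $\R\times\R$, giving the claimed estimate with $c(T)$ absorbing the $L^2_{loc}$ constant from Part~(i) of Proposition~\ref{prop-reg1} (which depended on $T_1,T_2$, here $0,T$) together with the extension constant.
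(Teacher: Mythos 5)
Your proposal is correct and follows essentially the same route as the paper: the paper also extends $g$ by zero to $\R$ (checking boundedness of the extension componentwise on the $g_{\ell,m}$, uniformly in $\ell,m$, via \cite[Lemma 2.1]{CFNT-17}, with the endpoint conditions playing exactly the role you describe), applies Proposition \ref{prop-reg1} to the extension, and restricts back to $[0,T]$ using that $\L$ only sees the values of $g$ on $[0,t]$.
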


\begin{rem}
 Note that the condition $g(0)=g(T)=0$ is well defined since
 \[
  H^\al\big([0,T], H^\mu(\S)\big)\hookrightarrow\C^0\big([0,T], H^\mu(\S)\big),\qquad\text{when}\quad\alpha>1/2,
 \]
 (where such embedding is established, e.g., in \cite[Section $1.7$]{CM-12} in the integer case and can be proved, e.g., by using \cite[Theorem 8.2]{DPV-12} on the components $g_{\ell,m}(\cdot)$ in the fractional case).
\end{rem}

\begin{proof}
First, we define the function $G:\R\times\S\to\C$ such that
\[
G(t,\x) :=
\begin{cases}
g(t,\x) & \text{if }(t,\x)\in [0,T]\times\S \\
0 & \text{otherwise}.
\end{cases}
\]
As a consequence,
\[
G_{\ell,m}(t) :=
\begin{cases}
g_{\ell,m}(t) & \text{if }t\in [0,T] \\
0 & \text{otherwise}.
\end{cases}
\]
Note also that, whenever $1/2< \al <3/2$ and $\|g(0)\|_{H^\mu(\S)}=\|g(T)\|_{H^\mu(\S)}=0$, $g_{\ell,m}(0)=g_{\ell,m}(T)=0$ for every $\ell$ and $m$, so that $G_{\ell,m}(0)=G_{\ell,m}(T)=0$ for every $\ell$ and $m$. Then, by \cite[Lemma 2.1]{CFNT-17}, we have that $G_{\ell, m} \in H^\al (\R)$ with $\| G_{\ell,m}\|_{H^\al (\R)} \leq c \| g_{\ell,m}\|_{H^\al (0,T)}$ ($c$ being independent of $\ell$ and $m$). Thus $G \in H^\al(\R, H^\mu(\S))$ and 
\beq \label{rogna}
\| G \|_{ H^\al(\R, H^\mu(\S)) }\leq c \|  g \|_{ H^\al([0,T], H^\mu(\S)) }.
\eeq
Now, since by construction $\mathrm{supp}\{G\}\subseteq[0,T]$, $G$ satisfies the assumptions of Proposition \ref{prop-reg2} and so $\L G \in H^{\al+1/7}([0,T], H^\mu(\S))$. Finally, since again by construction $\L G (t,\x) = \L g (t,\x)$ for all $(t,\x) \in [0,T]\times\S$, the proof is complete.
\end{proof}

Such a proposition has the following immediate corollay, which claims that one can actually drop the assumption $g(T)=0$ in the condition (ii).

\begin{cor}
 \label{cor-reg2}
 Let $\al,\,\mu\geq0$ and $T>0$. Let also $g \in H^\al\big([0,T], H^\mu(\S)\big)$. Then, whenever one of the following conditions is satisfied:
\begin{itemize}
\item[{(i)}] $0\leq\al<1/2$,
\item[{(ii)}] $1/2< \al <3/2$ and $g(0)=0$,
\end{itemize}
there results that $\L g \in H^{\al+1/7}\big([0,T], H^\mu(\S)\big)$. Moreover, in both cases, there exists $c(T)>0$, only depending $T$, such that
\[
\|\L g\|_{ H^{\al+1/7}([0,T], H^\mu(\S)) }\leq c(T) \| g\|_{ H^{\al}([0,T], H^\mu(\S)) }.
\]
\end{cor}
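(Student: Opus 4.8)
The plan is to deduce Corollary~\ref{cor-reg2} from Proposition~\ref{prop-reg2} by enlarging the time interval; item~(i) requires nothing new, since it is item~(i) of Proposition~\ref{prop-reg2} verbatim, so I only discuss item~(ii). Given $g\in H^\al([0,T],H^\mu(\S))$ with $1/2<\al<3/2$ and $g(0)=0$, the idea is to build an extension $\widetilde g\in H^\al([0,2T],H^\mu(\S))$ with $\widetilde g=g$ on $[0,T]\times\S$, $\widetilde g(0)=\widetilde g(2T)=0$, and $\|\widetilde g\|_{H^\al([0,2T],H^\mu(\S))}\leq C(T)\,\|g\|_{H^\al([0,T],H^\mu(\S))}$. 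Then Proposition~\ref{prop-reg2}(ii), applied on $[0,2T]$ in place of $[0,T]$, will give $\L\widetilde g\in H^{\al+1/7}([0,2T],H^\mu(\S))$; and since for $t\leq T$ the integral $\L\widetilde g(t,\x)=\int_0^t\big(I_{t-s}\widetilde g(s,\cdot)\big)(\x)\ds$ only sees the values of $\widetilde g$ on $[0,t]\subseteq[0,T]$, where $\widetilde g=g$, one has $\L\widetilde g=\L g$ on $[0,T]\times\S$, so restriction will finish the job.

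To construct $\widetilde g$, first I would fix a cut--off $\eta\in C^\infty(\R)$ with $\eta\equiv1$ on $[0,T]$ and $\eta\equiv0$ on a neighbourhood of $2T$, together with a bounded linear extension operator $E\colon H^\al(0,T)\to H^\al(\R)$ (Stein's, say). Working on each Fourier component -- which is legitimate because $\al>1/2$, so the values $g_{\ell,m}(0)$ are well defined and vanish for all $\ell,m$ -- I would put $\widetilde g_{\ell,m}:=\eta\cdot(E g_{\ell,m})$ and $\widetilde g:=\somma\widetilde g_{\ell,m}\,Y_{\ell,m}$. On $[0,T]$ one has $\widetilde g=g$, hence $\widetilde g(0)=g(0)=0$, while $\widetilde g\equiv0$ near $2T$ by the choice of $\eta$. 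Since both $E$ and multiplication by the fixed compactly supported $C^\infty$ function $\eta$ are bounded on the relevant time--Sobolev spaces, with operator norms depending only on $\al$ and $T$ (they act on the $t$ variable alone, hence commute with the projections onto $Y_{\ell,m}$), summing the componentwise bounds $\|\widetilde g_{\ell,m}\|_{H^\al(0,2T)}\leq C(T)\,\|g_{\ell,m}\|_{H^\al(0,T)}$ against the weights $\langle\ell\rangle^{2\mu}$ and using the order--exchange of seminorms recalled in Section~\ref{subsec-function} yields $\widetilde g\in H^\al([0,2T],H^\mu(\S))$ with the desired bound.

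It then remains to assemble the pieces: $\widetilde g$ satisfies the hypotheses of Proposition~\ref{prop-reg2}(ii) on $[0,2T]$, so $\L\widetilde g\in H^{\al+1/7}([0,2T],H^\mu(\S))$ with $\|\L\widetilde g\|_{H^{\al+1/7}([0,2T],H^\mu(\S))}\leq c(2T)\,\|\widetilde g\|_{H^\al([0,2T],H^\mu(\S))}$; since $\L\widetilde g$ and $\L g$ coincide on $[0,T]\times\S$, restricting and chaining with the bound on $\widetilde g$ gives $\L g\in H^{\al+1/7}([0,T],H^\mu(\S))$ and $\|\L g\|_{H^{\al+1/7}([0,T],H^\mu(\S))}\leq c(T)\,\|g\|_{H^\al([0,T],H^\mu(\S))}$ with $c(T):=c(2T)\,C(T)$.

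The one step that is not pure bookkeeping is the uniformity in $(\ell,m)$ of the extend--then--cut--off operation; I expect this to be routine, precisely because those operations touch only the time variable and therefore commute with the spherical harmonics decomposition, so their constants depend on $\al$ and $T$ but on neither $\ell$, $m$ nor $\mu$. Note also that no new restriction on $\al$ appears: the range $1/2<\al<3/2$ is all that is used, and it is preserved when we move from $[0,T]$ to $[0,2T]$.
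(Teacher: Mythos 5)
Your proposal is correct and follows essentially the same route as the paper: extend $g$ to $[0,2T]$ so that the extension vanishes at both endpoints, apply Proposition~\ref{prop-reg2}(ii) there, and use the causal (Volterra) structure of $\L$ to conclude that $\L$ of the extension coincides with $\L g$ on $[0,T]$. The only difference is the extension mechanism — you use a Stein extension followed by a smooth cutoff, while the paper reflects $g$ evenly about $t=T$ and checks the uniform componentwise bound directly (invoking the off--diagonal estimate of \cite[Lemma 2.1]{CFNT-17} for $\al\in[1,3/2)$) — and both yield constants independent of $\ell,m$, so the argument goes through.
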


\begin{proof}
 Consider the function $G:\R\times\S\to\C$ defined by
\[
G(t,\x) :=
\begin{cases}
g(t,\x) & \text{if }(t,\x)\in [0,T]\times\S \\
g(2T-t,\x) & \text{if }(t,\x)\in [T,2T]\times\S \\
0 & \text{otherwise}.
\end{cases}
\]
so that,
\[
G_{\ell,m}(t) :=
\begin{cases}
g_{\ell,m}(t) & \text{if }t\in [0,T] \\
g_{\ell,m}(2T-t) & \text{if }t\in [T,2T] \\
0 & \text{otherwise}.
\end{cases}
\]
As a consequence, one can check that $\| G_{\ell,m}\|_{H^\al (0,2T)} \leq c \| g_{\ell,m}\|_{H^\al (0,T)}$, $c$ being independent of $\ell$ and $m$ (the proof is straightforward when $\alpha\in[0,1]\setminus\big\{\f{1}{2}\big\}$, whereas it requires to argue as in the proof of \cite[Lemma 2.1]{CFNT-17} to estimate the ``off--diagonal'' terms when $\alpha\in\big[1,\f{3}{2}\big)$). Thus $G \in H^\al([0,2T], H^\mu(\S))$, whence it satisfies the assumptions of Proposition \ref{prop-reg2} on the interval $[0,2T]$, which implies $\L G \in H^{\al+1/7}([0,2T], H^\mu(\S))$. Then, one concludes observing again that, by construction, $\L G (t,\x) = \L g (t,\x)$ for all $(t,\x) \in [0,T]\times\S$.
\end{proof}


\subsection{Regularity of the trace on $\S$ of functions in $\D(\h)$}
\label{subsec:qzero}

Here we discuss the regularity of $u_{|\S}$ for functions $u\in\D(\h)$, or equivalently, the regularity of $q$ in \eqref{eq-non_dom2}.

The natural guess, descending from the linear case, is that $q\in H^{3/2}(\S)$. To this aim, since the trace operator is bounded and surjective from $H^2(\R^3)$ to $H^{3/2}(\S)$, it is sufficient to prove that for any given function $\eta\in H^{3/2}(\S)$, there exists a unique $q\in H^{3/2}(\S)$ that solves, for some value of $\lambda>0$,
\[
 q(\x)+\G \nu(q ) (\x)=\eta(\x),\qquad\forall\x\in\S;
\]
that is,
\beq \label{BC}
q (\x) + \int_{\S}     \T^\l (\x,\y) \nu\big(q(\y)\big)  \dsy = \eta(\x),\qquad\forall\x\in\S,
\eeq
where $\T^\l (\x,\y):=G^\l (\x-\y)$ for every $\x, \, \y\in \S$. Therefore, we state the following result.

\begin{prop} \label{pro-BC}
Let $\beta\in\R$, $\sigma\geq1/2$ and $\eta\in H^{3/2}(\S)$. Then, there exists $\l>0$ such that there exists a unique $q\in H^{3/2} (\S)$ that solves \eqref{BC}. In addition
\begin{equation}
 \label{eq-stima-contr}
 \|q\|_{H^{3/2} (\S)}\leq 2\|\eta\|_{H^{3/2} (\S)}.
\end{equation}
\end{prop}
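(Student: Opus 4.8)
The plan is to solve \eqref{BC} by a fixed point argument for the map
\[
\Phi(q)(\x):=\eta(\x)-\int_{\S}\T^\l(\x,\y)\,\nu\big(q(\y)\big)\dsy
\]
on a closed ball of $H^{3/2}(\S)$, choosing $\l>0$ large enough to make the operator norm of $\G(\cdot)_{|\S}$ small. First I would record the mapping and smoothing properties of the operator $\T^\l$ acting on $H^\mu(\S)$: since $\T^\l(\x,\y)=G^\l(\x-\y)$ is the trace on $\S\times\S$ of the Green function of $-\lap+\l$, and $G^\l$ has a single integrable singularity of order $|\x|^{-1}$, the associated operator on $\S$ gains regularity and, crucially, its norm decays as $\l\to+\infty$. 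Concretely I expect a bound of the form $\|\G h_{|\S}\|_{H^{3/2}(\S)}\le C\,\omega(\l)\,\|h\|_{H^{3/2}(\S)}$ with $\omega(\l)\to0$; this is most transparent in spherical harmonics, where $\G(\cdot)_{|\S}$ is the multiplication operator with symbol $g^\l_\ell$ given by a product of modified Bessel functions $I_{\ell+1/2}(\sqrt\l)K_{\ell+1/2}(\sqrt\l)$, whose large-$\l$ behaviour is $O(\l^{-1/2})$ uniformly in $\ell$ while retaining the $\langle\ell\rangle^{-1}$ decay that accounts for the two-derivative gain (hence the jump from $H^{3/2}$ to, morally, $H^{7/2}$, of which we only need $H^{3/2}$).

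Next I would combine this with the Schauder-type estimate \eqref{schauder}: since $\mu=3/2>1$ and the hypothesis $\sigma\ge1/2=[\mu]/2$ (here $[\mu]=1$) is exactly what \eqref{schauder_true}--\eqref{schauder} require, we get $\|\nu(q)\|_{H^{3/2}(\S)}\le c\,\|q\|_{H^{3/2}(\S)}^{2\sigma+1}$. Hence, on the ball $B_R:=\{q:\|q\|_{H^{3/2}(\S)}\le R\}$ with $R:=2\|\eta\|_{H^{3/2}(\S)}$,
\[
\|\Phi(q)\|_{H^{3/2}(\S)}\le \|\eta\|_{H^{3/2}(\S)}+C\,\omega(\l)\,c\,R^{2\sigma+1},
\]
so choosing $\l$ so large that $C\,\omega(\l)\,c\,R^{2\sigma+1}\le\|\eta\|_{H^{3/2}(\S)}$, i.e. $C\,\omega(\l)\,c\,(2\|\eta\|)^{2\sigma}\le1/2$, makes $\Phi$ map $B_R$ into itself, which also yields the quantitative bound \eqref{eq-stima-contr}. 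For contractivity I would use the elementary inequality $|\nu(z_1)-\nu(z_2)|\le c\,(|z_1|^{2\sigma}+|z_2|^{2\sigma})|z_1-z_2|$ together with the product/Schauder estimate in $H^{3/2}(\S)$ (again legitimate because $H^{3/2}(\S)$ is an algebra embedded in $L^\infty$, by \eqref{infi}) to obtain $\|\nu(q_1)-\nu(q_2)\|_{H^{3/2}(\S)}\le c\,R^{2\sigma}\|q_1-q_2\|_{H^{3/2}(\S)}$ on $B_R$, whence $\|\Phi(q_1)-\Phi(q_2)\|_{H^{3/2}(\S)}\le C\,\omega(\l)\,c\,R^{2\sigma}\|q_1-q_2\|_{H^{3/2}(\S)}\le\tfrac12\|q_1-q_2\|_{H^{3/2}(\S)}$ after possibly enlarging $\l$. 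Banach's fixed point theorem then gives existence and uniqueness of $q\in B_R$; uniqueness in all of $H^{3/2}(\S)$ (not just in $B_R$) follows because any solution $q$ automatically satisfies $\|q\|\le\|\eta\|+C\omega(\l)c\|q\|^{2\sigma+1}$, and for $\l$ large this forces $\|q\|\le R$ by a continuity/bootstrap argument on the sublevel structure of the scalar inequality.

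The main obstacle I anticipate is establishing the decay-in-$\l$ estimate for $\T^\l$ on $H^{3/2}(\S)$ with a constant uniform in the harmonic index $\ell$ — that is, showing $\sup_\ell\langle\ell\rangle\,g^\l_\ell\to0$ as $\l\to\infty$ where $g^\l_\ell$ is the spherical-harmonic symbol of $\G(\cdot)_{|\S}$. This requires uniform (in order) asymptotics of $I_{\ell+1/2}(\sqrt\l)K_{\ell+1/2}(\sqrt\l)$, for which the relevant uniform bound is the classical one $I_\nu(x)K_\nu(x)\le \tfrac{1}{2\sqrt{\nu^2+x^2}}$ (or its analogue), giving $g^\l_\ell\lesssim(\ell^2+\l)^{-1/2}$ and hence $\langle\ell\rangle g^\l_\ell\lesssim \langle\ell\rangle/\sqrt{\ell^2+\l}$, which is not uniformly small — so one actually needs the sharper two-factor decay $g^\l_\ell\lesssim \langle\ell\rangle^{-1}(\ell^2+\l)^{-1/2}$ coming from the fact that the trace of the Green function on $\S$ gains two derivatives, not one, and this finer estimate (equivalently, controlling $\T^\l$ from $H^{3/2}$ to $H^{3/2}$ rather than merely $H^{3/2}$ to $H^{1/2}$) is the delicate computational point; everything else is a routine contraction-mapping argument.
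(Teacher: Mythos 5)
Your overall strategy coincides with the paper's: solve \eqref{BC} by a fixed point argument for $q\mapsto \eta-T^\l\nu(q)$ on the ball of radius $2\|\eta\|_{H^{3/2}(\S)}$, taking $\l$ large so that the operator norm of $T^\l$ is small, and using \eqref{schauder} for the invariance of the ball. Your symbol computation is also essentially the paper's: the integral $T^\l_\ell=\int_0^{+\infty}\frac{kJ^2_{\ell+1/2}(k)}{k^2+\l}\,dk$ equals $I_{\ell+1/2}(\sqrt\l)K_{\ell+1/2}(\sqrt\l)$, and the uniform bound you quote gives $|T^\l_\ell|\le c\,\l^{-1/2}$ uniformly in $\ell$ (the paper settles for $c\,\l^{-1/3}$ via the Landau bound on $J_\nu$). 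However, the ``main obstacle'' you single out is a red herring: since $T^\l$ is a multiplication operator in the spherical--harmonic basis, its $H^{3/2}(\S)\to H^{3/2}(\S)$ norm is exactly $\sup_\ell|T^\l_\ell|$ --- the weights $\langle\ell\rangle^{3/2}$ appear on both sides and cancel --- so no $\ell$--decay of the symbol is required, and the ``sharper two--factor decay'' $\langle\ell\rangle^{-1}(\ell^2+\l)^{-1/2}$ you say is needed is in fact false (for fixed $\l$ the symbol behaves like $(2\ell)^{-1}$ as $\ell\to\infty$; the single--layer trace gains one derivative, not two). Fortunately none of this is needed.

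The genuine gap is in your contraction step. You claim $\|\nu(q_1)-\nu(q_2)\|_{H^{3/2}(\S)}\le c\,R^{2\sigma}\|q_1-q_2\|_{H^{3/2}(\S)}$ ``because $H^{3/2}(\S)$ is an algebra embedded in $L^\infty$,'' but the algebra property controls products, not differences of a non--smooth composition: $\nu(z)=\beta|z|^{2\sigma}z$ is only $C^1$ (barely $C^{1,1}$ at $\sigma=1/2$), and a Lipschitz estimate for $q\mapsto\nu(q)$ in $H^{3/2}$ would require roughly two derivatives of $\nu$, which you do not have for general $\sigma\ge 1/2$. This is precisely why the paper does \emph{not} contract in the $H^{3/2}(\S)$ metric: it uses Kato's metric--weakening trick, proving contractivity in the $L^\infty(\S)$ distance on the $H^{3/2}$--ball $X$ (which is complete for that distance by Banach--Alaoglu), using only the pointwise inequality \eqref{eq-nonlin} together with the elementary kernel bound $\|T^\l g\|_{L^\infty(\S)}\le c\,\l^{-1/2}\|g\|_{L^\infty(\S)}$. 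Without either that device or a proof of the $H^{3/2}$ Lipschitz estimate, your contraction does not close. A further minor point: your argument for uniqueness in all of $H^{3/2}(\S)$ from the scalar inequality $x\le A+\ep x^{2\sigma+1}$ does not work as stated, since that inequality also admits arbitrarily large solutions; the contraction only yields uniqueness in the ball $X$ (which is all the paper's own proof establishes as well).
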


\begin{proof}
Preliminarily, we denote by $T^\l$ the operator defined by the integral kernel $\T^\l (\x,\y)$. Then, we may divide the proof in three steps.

\emph{Step (i): $L^\infty(\S)$--estimate for $T^\l$}. We aim at proving
\beq \label{stimuccia}
\| T^\l g \|_{  L^\infty (\S) } \leq \f{c}{\sqrt{\l}}\|  g \|_{  L^\infty (\S) },\qquad\forall g\in L^\infty (\S).
\eeq
Since by \eqref{eq-green} $ \T^\l (\x,\y)\geq 0$, it is sufficient to prove that 
\[
\int_{\S} \T^\l (\x,\y) \dsy \leq \f{c}{\sqrt{\l}},\qquad \text{for a.e.}\quad \x \in \S.
\]
For any fixed $\x\in\S$, choosing the angle between $\x$ and $\y$ as the colatitude $\theta$, there results
\begin{align*}
\int_{\S} \T^\l (\x,\y) \dsy & = \f{1}{2\sqrt2} \int_0^\pi \f{ \exp^{-\sqrt{2 \l (1-\cos \theta)}}}{ \sqrt{1-\cos \theta}}\sin\theta\,d\theta 
=  \f{1}{2\sqrt2} \int^2_0 \f{ \exp^{-\sqrt{2\l a}}}{\sqrt{a}}\,da \\
&\leq  \f{1}{2\sqrt2} \int^{+\infty}_0 \f{ \exp^{-\sqrt{2\l a}}}{\sqrt{a}}\,da =  \f{\sqrt{2}}{4\sqrt{\l}},
\end{align*}
which, thus, proves \eqref{stimuccia}.

\emph{Step (ii): $H^\mu(\S)$--estimates for the operator defined by $\T^\l$}. We aim at proving
\beq \label{stimalineare} 
\| T^\l g\|_{H^\mu (\S) } \leq c \, \l^{-1/3} \|g \|_{H^\mu (\S)},\qquad\forall g\in H^\mu (\S),
\eeq
for any fixed $\mu\in(1,2)$. As we made for $I_t$ in Section \ref{sec-opLambda}, we start by establishing a suitable representation of the operator with respect to the decomposition in spherical harmonics. First, by \eqref{nota} we have that
\[
 \F (-\Delta+\lambda)^{-1}u (\k)=\sum_{\ell=0}^\infty\sum_{m=-\ell}^\ell (-\imath)^\ell\f{\widetilde{u}_{\ell,m}}{|\k|^2+\lambda}Y_{\ell,m}\bigg(\f{\k}{|\k|}\bigg).
\]
Furthermore, using \eqref{eq-plainw} and the $L^2(\S)$--orthonormality of the spherical harmonics, there results
\begin{align*}
 (-\Delta+\lambda)^{-1}u\,(\x) & =\F^{-1} \F (-\Delta+\lambda)^{-1}u (\x)\\[.2cm]
 & =\sum_{\ell=0}^\infty\sum_{m=-\ell}^\ell Y_{\ell,m}\bigg(\f{\x}{|\x|}\bigg)\int_0^{+\infty} \frac{k^2}{\sqrt{k|\x|}} J_{\ell+1/2}(k|\x|) \frac{\widetilde{u}_{\ell,m}(k)}{k^2+\l} \dkm
\end{align*}
and thus
\beq \label{fourier2}
\big((-\Delta+\lambda)^{-1}u\big)_{\ell,m} (|\x|) =\int_0^{+\infty} \frac{k^2}{\sqrt{k|\x|}} J_{\ell+1/2}(k|\x|) \frac{\widetilde{u}_{\ell,m}(k)}{k^2+\l} \dkm.
\eeq
Now, since by standard potential theory and \eqref{eq-pot}
\[
 (-\Delta+\lambda)\G g=g\delta_{\S}
\]
and since (arguing as at the end of Section \ref{sec-BF}) \eqref{fourier2} can be proved also for $u=g\delta_{\S}$ with $g\in H^\mu(\S)$, using \eqref{eq-rinota} there results
\[
 (\G g)_{\ell,m}(|\x|)=\f{g_{\ell,m}}{\sqrt{|\x|}}\int_0^{+\infty}\f{k J_{\ell+1/2}(k)J_{\ell+1/2}(k|\x|)}{k^2+\lambda}\dkm.
\]
Finally, since $T^\lambda g\,(\x)=\G g_{|\S}\,(\x)$, for every $\x\in\S$, we have that
\beq \label{actionT}
\big( T^\l g\big)_{\ell,m} = T^\l_\ell \, g_{\ell, m}, \qquad\text{with}\qquad T^\l_{\ell}:=\int_0^{+\infty} \frac{k J^2_{\ell+1/2} (k)}{k^2+\l}\dkm.
\eeq
As a consequence, using \eqref{landau1} we obtain
\beq \label{stimetta}
|\T^\l_{\ell}|\leq c \int_0^{+\infty} \frac{k^{1/3} }{(k^2+\l) } \, {\rm d}k \leq c \l^{-1/3},
\eeq
with $c$ independent of $\ell$, which immediately yields \eqref{stimalineare}.

\emph{Step (iii): claim of the proposition}. In order to complete the proof it is sufficient to show that there exists $\lambda>0$ such that the map
\[
{\tau}^\l( q) := -T^\l \big(\nu( q)\big)+ \eta
\]
is a contraction in 
\[
X:=\left\{ q\in H^{3/2}(\S):\|q\|_{H^{3/2}(\S)} \leq 2 \| \eta \|_{H^{3/2}(\S)}\right\}, 
\]
with respect to a proper metric which make $X$ complete. Let us prove, first, that $X$ is preserved by $\tau^\lambda$. By \eqref{stimalineare} and \eqref{schauder}, we have that
\begin{equation}
\label{eq-preservation}
\|  \tau^\l( q) \|_{H^{3/2}(\S)}  \leq c_{\beta,\sigma}\, \big(2 \| \eta \|_{H^{3/2}(\S)}\big)^{2\sigma+1} \l^{-1/3} + \| \eta^\l \|_{H^{3/2}},\qquad\forall q\in X,
\end{equation}
and, thus, the claim is proved for a sufficiently large $\l$. It is left to discuss contractivity with respect to a suitable metric. Consider the $L^\infty(\S)$--one. It is not difficult to see that it makes $X$ complete. Indeed, an $L^\infty(\S)$--Cauchy sequence in $X$ converges in $L^\infty(\S)$ to a limit which has to belong to $X$ since the sequence is also weakly convergent in $H^\mu(\S)$ by Banach--Alaoglu. Then, fix arbitrary $q_1,\,q_2\in \big(X,\|\cdot\|_{L^\infty(\S)}\big)$. Since
\[
 \tau^\lambda(q_1)-\tau^\lambda(q_2)=T^\l \big(\nu( q_2)-\nu( q_1)\big),
\]
combining \eqref{stimalineare}, \eqref{infi} and the fact that
\begin{equation}
\label{eq-nonlin}
 \lf| |z_1|^{2\sigma}z_1 -  |z_2|^{2\sigma}z_2 \ri| \leq c\big(|z_1|^{2\sigma} + |z_2|^{2\sigma} \big)|z_1 -z_2|,\qquad\forall z_1,\,z_2\in\C,
\end{equation}
one can check that
\[
 \|\tau^\lambda(q_1)-\tau^\lambda(q_2)\|_{L^\infty(\S)}\leq c\l^{-1/3}\| \nu(q_1)-\nu(q_2) \|_{L^\infty ( \S )}\leq c_{\beta,\sigma}\l^{-1/3}\| q_1 - q_2 \|_{L^\infty ( \S )} .
\]
Hence, $\tau^\l$ is contractive again for $\l$ large enough.
\end{proof}

\begin{rem}
\label{rem-invla}
 Note that, since $\nu(q)\in H^{3/2}(\S)$ whenever $q\in H^{3/2}(\S)$ (from \eqref{schauder_true}), one can argue as in the linear case to prove that $\D(\h)$ is independent of $\lambda$.
\end{rem}

\begin{rem}
Note also that, combining Proposition \ref{pro-BC} with the fact that the $H^{3/2}(\S)$--regularity is preserved by $\nu$ (again by \eqref{schauder_true}), the knowledge of the regular part of $u\in \D (\h)$ for some fixed $\lambda$ allows to reconstruct $q$, and thus $u$ itself. Note that the converse is false in general (as a consequence of Remark \ref{rem-invla}).
\end{rem}


\subsection{Regularity of the source term of (\ref{eq-charge})}
\label{subsec-regsource}

Here we discuss the regularity of the source term $F_0$ of \eqref{eq-charge}, defined by \eqref{source}. Clearly, since $\psi_0\in\D(\h)$, for any fixed $\l>0$ we can decompose it as $F_0=F_{0,1}+F_{0,2}$, where
\[
\left\{
\begin{array}{l}
 \displaystyle F_{0,1} (t,\x):= (U_t\phi_{0}^\l  )_{|\S}(\x),\\[.2cm]
 \displaystyle F_{0,2} (t,\x) :=-(U_t\G \nu(q_0))_{|\S}(\x),
\end{array}
\right.
\qquad\forall (t,\x)\in[0,+\infty)\times\S,
\]
with $\phi_{0}^\l\in H^2(\R^3)$ and (by Proposition \ref{pro-BC}) $q_0\in H^{3/2}(\S)$.

As in the previous results, it is convenient to decompose these quantities with respect to the basis of the spherical harmonics. Using \eqref{eq-plainw}, \eqref{nota}, \eqref{eq-rinota} and the $L^2(\S)$--orthonormality of the spherical harmonics as in Step (ii) of the proof of Proposition \ref{pro-BC}, and recalling that by functional calculus $U_t=\exp^{-\i\Delta t}$ and $U_t\G \nu(q_0)=\big(\exp^{-\i\Delta t}(-\Delta+\lambda)^{-1}\big)\nu(q_0)\delta_{\S}$, one can check that
\begin{align}
 \label{eq-F1}(F_{0,1})_{\ell,m}(t)&= \int_0^{+\infty} r^{3/2}\exp^{-\i t r^2} J_{\ell+1/2} (r) (\widetilde{\phi_0^\l})_{\ell,m} (r) \,dr\\[.2cm]
 (F_{0,2})_{\ell,m}(t)&=f_{2,\ell}(t)\big(\nu(q_0)\big)_{\ell,m},\quad\text{with}\quad f_{2,\ell}(t):=\int_0^{+\infty}  \frac{r\,\exp^{-\i t r^2}}{r^2+\l}  J^2_{\ell+1/2} (r)\,dr.
\end{align}
In addition, note that, since $q_0:={\psi_0}_{|\S}$ and $U_0=\mathbb{I}$, one has $F_0(0)\equiv q_0$.

\begin{prop} \label{prop-source}
Let $\beta\in\R$, $\sigma\geq1/2$ and $\psi_0=\phi_{0}^\l-\G \nu(q_0) \in \D(\h)$. Then, for any fixed $\l>0$,
\begin{align}
& \| F_0 \|_{C^0([0,T], H^{3/2}(\S)) }\lesssim_{\l} \big(\| \varphi_0^\l \|_{H^{2}(\R^3)}+   \|q_0\|_{H^{3/2}(\S)}^{2\sigma+1} \big) \label{sourceinfinito},\\[.2cm]
& \|F_0 \|_{ H^\al ([0,T],H^{2/3}(\S) )} \lesssim_{\l,\alpha,T} \big(\| \varphi_0^\l \|_{H^{2}(\R^3)}+   \|q_0\|_{H^{3/2}(\S)}^{2\sigma+1} \big), \label{fase1}\\[.2cm]
& \|F_0-q_0 -\i \L \nu(q_0) \|_{ H^1 ([0,T],H^{2/3}(\S) )} \lesssim_{\l,T} \big(\| \varphi_0^\l \|_{H^{2}(\R^3)}+   \|q_0\|_{H^{3/2}(\S)}^{2\sigma+1} \big)\label{fase2},
\end{align}
for every $T>0$ and every $\alpha\in\big(0,\tfrac{9}{14}\big)$.
\end{prop}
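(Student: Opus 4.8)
The plan is to estimate each of the three quantities by working componentwise in the spherical harmonics decomposition, using the explicit integral formulas \eqref{eq-F1} for $(F_{0,1})_{\ell,m}$ and the analogous formula for $(F_{0,2})_{\ell,m}$, and then reassembling by summing against the weights $\langle\ell\rangle^{2\mu}$ that define the $H^\mu(\S)$ norms. The basic mechanism in all three cases is the same: one substitutes $s=r^2$ to turn the oscillatory integrals into genuine Fourier transforms in the time variable, so that the time--Sobolev norms \eqref{eq-timef} become integrals of $|\omega|^{2\alpha}$ against the squared modulus of a multiplier, and then one uses the uniform Bessel bounds \eqref{landau1} and \eqref{landau2} to control that multiplier with constants independent of $\ell$. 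The decomposition $F_0=F_{0,1}+F_{0,2}$ already provided allows treating the two pieces separately: $F_{0,1}$ depends on the regular part $\phi_0^\lambda\in H^2(\R^3)$ through its Bessel--Fourier transform, for which one has the unitarity and Sobolev characterizations from Section \ref{sec-BF}, while $F_{0,2}$ factors as a fixed ($\ell$--dependent but $m$--independent) time multiplier $f_{2,\ell}(t)$ times the constant $(\nu(q_0))_{\ell,m}$, and the $H^{3/2}(\S)$ size of $\nu(q_0)$ is controlled by $\|q_0\|_{H^{3/2}(\S)}^{2\sigma+1}$ via the Schauder estimate \eqref{schauder}.

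For \eqref{sourceinfinito} I would bound the $C^0$--in--time, $H^{3/2}(\S)$--in--space norm: for $F_{0,1}$, estimate $|(F_{0,1})_{\ell,m}(t)|$ by $\int_0^\infty r^{3/2}|J_{\ell+1/2}(r)|\,|(\widetilde{\phi_0^\lambda})_{\ell,m}(r)|\,dr$, split the $r$--integral at a threshold depending on $\ell$, use \eqref{landau2} (the $\ell^{-1/3}$ bound) on the low--frequency part and \eqref{landau1} (the $|r|^{-1/3}$ decay) on the high--frequency part, and Cauchy--Schwarz against the weight that turns $\int r^4|(\widetilde{\phi_0^\lambda})_{\ell,m}|^2\,dr$ summed with $\langle\ell\rangle^4$ into $\|\phi_0^\lambda\|_{H^2(\R^3)}^2$ (using that $H^2(\R^3)$ corresponds to the weight $(|k|^2+1)^2$ on the Bessel--Fourier side, and that $\langle\ell\rangle^{3}/r^{\cdots}$ is absorbed). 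For $F_{0,2}$ one needs a uniform-in-$t$ bound $|f_{2,\ell}(t)|\lesssim_\lambda \langle\ell\rangle^{-2/3}$ coming from $|J_{\ell+1/2}(r)|^2\lesssim \langle\ell\rangle^{-2/3}|r|^{-2/3}$ and the integrability of $|r|^{1/3}/(r^2+\lambda)$; this loses no powers of $\ell$ relative to $\nu(q_0)\in H^{3/2}$, so the sum closes. Continuity in $t$ follows from dominated convergence on each component plus the uniform summability just obtained (a standard Dini/Weierstrass argument).

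For \eqref{fase1} and \eqref{fase2} the substitution $s=r^2$, $r=\sqrt s$ recasts $(F_{0,1})_{\ell,m}(t)=\frac12\int_0^\infty s^{1/4}e^{-its}J_{\ell+1/2}(\sqrt s)(\widetilde{\phi_0^\lambda})_{\ell,m}(\sqrt s)\,ds$, whose Fourier transform in $t$ is (a multiple of) $|\omega|^{1/4}J_{\ell+1/2}(\sqrt{|\omega|})(\widetilde{\phi_0^\lambda})_{\ell,m}(\sqrt{|\omega|})$, and similarly for $f_{2,\ell}$. Then $\|F_0\|_{H^\alpha([0,T],H^{2/3}(\S))}$ is dominated by the $\mathring H^\alpha(\R,H^{2/3}(\S))$ norm of a compactly--modified extension (using a cutoff in time, or the extension lemma \cite[Lemma 2.1]{CFNT-17} as in Proposition \ref{prop-reg2}), which by \eqref{eq-timef} is $\sum_{\ell,m}\langle\ell\rangle^{4/3}\int_\R|\omega|^{2\alpha}|\widehat{(F_0)_{\ell,m}}(\omega)|^2\,d\omega$; plugging in the Bessel bounds, the condition $\alpha<9/14$ is exactly what makes the $\omega$--integral converge after accounting for the $|\omega|^{1/4}$ Jacobian factor, the $|\omega|^{-1/6}$ (i.e. $|J_{\ell+1/2}(\sqrt{|\omega|})|\lesssim|\omega|^{-1/6}$) decay from \eqref{landau1}, and the loss from passing from the $H^2(\R^3)$ weight to the $H^{2/3}(\S)$ weight. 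For \eqref{fase2} one subtracts off the first Taylor term in $t$: since $F_0(0)=q_0$ and (differentiating the Duhamel formula, or directly) the linear-in-$t$ behaviour near $0$ is captured by $-\i\L\nu(q_0)$ — this is visible because $\frac{d}{dt}$ brings down a factor $-\i s$ inside the integrals, and $q_0+\i\L\nu(q_0)$ is precisely the first-order part of the Duhamel expansion of $F_0$ — the difference gains one extra power of $|\omega|^{-1}$ in its Fourier transform, which upgrades the regularity from $H^\alpha$ (any $\alpha<9/14$) all the way to $H^1$ while keeping the same spatial space $H^{2/3}(\S)$; the remaining $\omega$--integral with weight $|\omega|^2$ then converges by the same Bessel estimates.

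The main obstacle is the bookkeeping of the two competing weights: on the spatial side one must trade the $H^2(\R^3)$ smoothness of $\phi_0^\lambda$ (weight $(|k|^2+1)^2$ against $\int r^2\,dr$, i.e.\ effectively $\langle\ell\rangle^4$ together with $r^4$) down to only $H^{2/3}(\S)$ (weight $\langle\ell\rangle^{4/3}$) and $H^{3/2}(\S)$ (weight $\langle\ell\rangle^{3}$) respectively, while on the temporal side one must simultaneously extract enough decay in $\omega$ from the Bessel factor and the change-of-variables Jacobian to afford $|\omega|^{2\alpha}$; keeping all constants uniform in $\ell$ throughout (so that the $\ell$--sum genuinely reduces to a Sobolev norm of the data) is what forces the use of \eqref{landau1}--\eqref{landau2} rather than the sharper but $\ell$--dependent \eqref{eq-unuseful}, and pinning down the precise exponent $9/14$ requires care in how the threshold splitting and the interpolated Bessel bound are balanced — exactly the same $\tfrac17$--type arithmetic that appears in Proposition \ref{prop-reg1}.
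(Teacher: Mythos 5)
Your overall framework (spherical harmonics, the Bessel--Fourier representation, the split $F_0=F_{0,1}+F_{0,2}$, and \eqref{schauder} to control $\nu(q_0)$) matches the paper's, but two of your key steps do not close as described. First, for \eqref{sourceinfinito} you propose to prove the $H^{3/2}(\S)$ bound on $F_{0,1}$ by Cauchy--Schwarz on the Bessel integral together with the uniform bounds \eqref{landau1}--\eqref{landau2}. This fails: you need $\sum_{\ell,m}\langle\ell\rangle^{3}|(F_{0,1})_{\ell,m}(t)|^2\lesssim\|\phi_0^\l\|_{H^2(\R^3)}^2$, which after Cauchy--Schwarz against the $H^2$ weight requires $\int_0^{+\infty}r\,J_{\ell+1/2}^2(r)(1+r^2)^{-2}\,dr\lesssim\langle\ell\rangle^{-3}$; the bounds $|J_{\ell+1/2}(r)|\lesssim\min(r^{-1/3},\ell^{-1/3})$ yield at best $\langle\ell\rangle^{-2/3}$, because they are blind to the super--exponential smallness of $J_{\ell+1/2}(r)$ for $r\ll\ell$, which is what actually produces the trace gain. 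The paper sidesteps this entirely: $U_t\phi_0^\l\in C^0(\R,H^2(\R^3))$ by Stone's theorem, and the standard trace theorem $H^2(\R^3)\to H^{3/2}(\S)$ then gives \eqref{sourceinfinito} for $F_{0,1}$ with no Bessel analysis at all.

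Second, your explanation of the threshold $\alpha<\tfrac{9}{14}$ is not where it comes from. The piece $F_{0,1}$ is in fact in $H^{1}$ in time with values in $H^{2/3}(\S)$ (this is what the paper proves en route to \eqref{fase2}: the weight $|\ome|^{2}=k^4$ is paid by the $H^2(\R^3)$ regularity of $\phi_0^\l$, while the reduced spatial weight is absorbed by $J^2_{\ell+1/2}\lesssim\ell^{-2/3}$), so no $\ome$--integral for $F_{0,1}$ produces $\tfrac{9}{14}$. The obstruction sits in the term $\i\L\nu(q_0)$ that must be added back to pass from \eqref{fase2} to \eqref{fase1}: $\nu(q_0)$ is constant in time, its restriction to $[0,T]$ has time Sobolev regularity only up to (not including) $\tfrac12$, and $\L$ gains exactly $\tfrac17$ by Corollary \ref{cor-reg2}, whence $\tfrac{9}{14}=\tfrac12+\tfrac17$. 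Relatedly, for \eqref{fase2} your claim that ``the difference gains one extra power of $|\ome|^{-1}$'' does not identify the cancellation actually needed on the singular piece: one must observe that $-\i\L\nu(q_0)=\big((U_t-\mathbb{I})\,{\mathcal G}^0\nu(q_0)\big)_{|\S}$, so that $F_{0,2}(t)-F_{0,2}(0)-\i\L\nu(q_0)=-\big((U_t-\mathbb{I})(\G\nu(q_0)-{\mathcal G}^0\nu(q_0))\big)_{|\S}$, and the relevant radial kernel improves from $(r^2+\l)^{-1}$ to $-\l\,r^{-2}(r^2+\l)^{-1}$; it is this extra decay that makes the $t$--derivative of the $F_{0,2}$ contribution absolutely convergent, uniformly in $\ell$ and $t$. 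Without these ingredients neither \eqref{fase1} nor \eqref{fase2} follows from the estimates you describe.
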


\begin{rem}
 Although $\L$ is defined in \eqref{eq-Lambda} for functions of time and space, the extension to functions of space only is straightforward. In particular
 \[
  \big(\L \nu(q_0)\big)(t,\x):=\int_0^t\big(I_{t-s}\nu(q_0)\big)(\x)\ds.
 \]
\end{rem}

\begin{proof}
We divide the proof in three parts according to the three claims of the statement.

\emph{Part (i): proof of \eqref{sourceinfinito}}. On the one hand, since $\phi_{0}^\l\in H^2(\R^3)$, by the Stone's theorem $U_t \phi_0^\l \in C^0(\R,H^2(\R^3))$ with $\|U_{(\cdot)} \varphi_0^\l\|_{C^0([0,T], H^{2}(\S))}\lesssim \| \varphi_0^\l \|_{H^{2}(\R^3)}$, for every $T>0$. Hence, by standard Trace theory, $F_{0,1}\in C^0\big(\R, H^{3/2}(\S)\big) $ with $\| F_{0,1} \|_{C^0([0,T], H^{3/2}(\S)) }\lesssim\| \varphi_0^\l \|_{H^{2}(\R^3)} $, for every $T>0$. On the other hand, arguing as in \eqref{stimetta}, one can see that $f_{2,\ell}$ are continuous functions such that
\[
| f_{2,\ell}(t) | \leq c_\l,\qquad\forall t\in\R,
\]
with $c_\l$ independent from $\ell$. As a consequence, for every fixed $t\in\R$, $\|F_{0,2}(t+h)-F_{0,2}(t)\|_{H^{3/2}(\S)}\to0$, as $h\to 0$, by dominated convergence and, furthermore,
\[
 \|F_{0,2}(t)\|_{H^{3/2}(\S)}\leq c_\l\|\nu(q_0)\|_{H^{3/2}(\S)}\leq c_\l\|q_0\|_{H^{3/2}(\S)}^{2\sigma+1},\qquad\forall t\in\R,
\]
thus proving \eqref{sourceinfinito} (one could actually prove that $F_{0,2}\in C^\delta\big(\R, H^{3/2}(\S)\big)$, for all $\delta\in \big(0,\f{1}{3}\big)$, but this goes beyond the aims of the proof).

\emph{Part (ii): proof of \eqref{fase2}}. Preliminarily, we note that, since $F_0(0)\equiv q_0$, we can rewrite $F_0$ as
\[
F_0(t,\x)-q_0(\x) -\i \big(\L \nu(q_0)\big)(t,\x)= G_1(t,\x) + G_2(t,\x),\qquad\forall (t,\x)\in\R\times\S,
\]
with
\[
G_1(t):=F_{0,1} (t,\x)- F_{0,1} (0,\x) \quad\text{and} \quad G_2 (t,\x): = F_{0,2} (t,x)-F_{0,2} (0,x) -\i \big(\L \nu(q_0)\big)(t,\x).
\]
Again, the main tool is to establish the decomposition of these quantities with respect to the basis of the spherical harmonics. A straightforward computation shows that
\begin{equation}
\label{eq-G1}
 (G_1)_{\ell,m}(t)= \int_0^{+\infty} r^{3/2}\big(\exp^{-\i t r^2}-1\big) J_{\ell+1/2} (r) (\widetilde{\phi_0^\l})_{\ell,m} (r) \,dr,
\end{equation}
while for $(G_2)_{\ell,m}(t)$ some further effort is required. First we see that
\[
 (F_{0,2})_{\ell,m}(t)-(F_{0,2})_{\ell,m}(0)=\big(\nu(q_0)\big)_{\ell,m}\int_0^{+\infty}  \frac{r\,\big(\exp^{-\i t r^2}-1\big) }{r^2+\l}  J^2_{\ell+1/2} (r)\,dr.
\]
In addition, from the Spectral theorem we have that
\[
-\i \int_0^t U_{t-s} \ds= \big(U_t-\mathbb{I}\big)(-\Delta+\ep)^{-1}-\imath\ep\int_0^tU_{t-s}(-\Delta+\ep)^{-1}\ds,
\]
for any fixed $\ep>0$, while, from \eqref{eq-Lambda}, \eqref{eq-I} and \eqref{eq-kernI}, we have that
\[
 -\i \big(\L \nu(q_0)\big)(t,\cdot)=\bigg(-\imath\int_0^t\big(U_{t-s}(\nu(q_0)\delta_{\S})\big)(\x)\ds\bigg)_{\big|\S}.
\]
Thus, combining the two relations, there results
\[
 -\i \big(\L \nu(q_0)\big)(t,\cdot) =\big(  (U_t-\mathbb{I}) \, {\mathcal G}^0  \nu(q_0) \big)_{|\S}.
\]
Hence, since
\[
 F_{0,2} (t,\cdot)-F_{0,2} (0,\cdot)=-\big(  (U_t-\mathbb{I}) \, \G  \nu(q_0) \big)_{|\S},
\]
one obtains
\[
 F_{0,2} (t,\cdot)-F_{0,2} (0,\cdot)-\i \big(\L \nu(q_0)\big)(t,\cdot) = -\left(  (U_t-\mathbb{I}) \, \big(\G  \nu(q_0)-{\mathcal G}^0  \nu(q_0)\big) \right)_{\big|\S},
\]
whence
\[
\left\{
\begin{array}{ll}
 \displaystyle (G_2)_{\ell,m}(t)=g_{2,\ell}(t) \, \big(\nu(q_0)\big)_{\ell,m}\qquad\text{with}\\[.4cm]
 \displaystyle g_{2,\ell}(t):=\int_0^{+\infty} r\,\big(\exp^{-\i t r^2}-1\big)J^2_{\ell+1/2} (r)\bigg(\frac{1}{r^2+\l}-\f{1}{r^2}\bigg)  \,dr\\[.4cm]
 \displaystyle \hspace{1.25cm}=-\l\int_0^{+\infty} \f{J^2_{\ell+1/2} (r)}{r\,(r^2+\l)}\big(\exp^{-\i t r^2}-1\big)\,dr.
 \end{array}
 \right.
\]

Now, using \eqref{eq-F1} and changing variables, there results
\[
(F_{0,1})_{\ell,m}(t)=\f{1}{2} \int_0^{+\infty} \exp^{-\i t \ome} {\ome^{1/4}} J_{\ell+1/2} (\sqrt{\ome}) (\widetilde{\phi_0^\l})_{\ell,m} (\sqrt{\ome})\dome=\widehat{g}_{\ell,m}(t),
\]
with
\[
 {g}_{\ell,m}(\ome):=\sqrt{\f{\pi}{2}}\,\mathds{1}_{[0,+\infty)}(\ome){|\ome|^{1/4}} J_{\ell+1/2} (\sqrt{|\ome|}) (\widetilde{\phi_0^\l})_{\ell,m} (\sqrt{|\ome|}),
\]
so that, by\eqref{eq-timef}, \eqref{landau2} and \eqref{fourier1}, 
\begin{align*}
\|F_{0,1} \|_{ H^1 (\R ,H^{2/3}(\S) ) }^2
&=\f{\pi}{2} \somma  \langle \ell \rangle^{2/3}\int_0^{+\infty} \langle \ome\rangle^2 \sqrt{\ome} J^2_{\ell+1/2} (\sqrt{\ome}) \big|(\widetilde{\phi_0^\l})_{\ell,m} (\sqrt{\ome})\big|^2 \dome\\
&=\pi\somma \langle \ell \rangle^{2/3} \int_0^{+\infty} (k^2 +k^6) J^2_{\ell+1/2} (k)  \big|(\widetilde{\phi_0^\l})_{\ell,m} (k)\big|^2 \dkm\\
&\leq c \somma \int_0^{+\infty} (k^2 +k^6) \big|(\widetilde{\phi_0^\l})_{\ell,m} (k)\big|^2 \dkm=c \|\phi_0^\l\|^2_{H^2(\R^3)}.\\
\end{align*}
Since, in addition, $F_{0,1}(0,\cdot)={\phi_0^\l}_{|\S}$, we have that
\[
 \|F_{0,1}(0,\cdot) \|_{ H^1 ([0,T] ,H^{2/3}(\S) ) }^2\leq c T \|\phi_0^\l\|^2_{H^2(\R^3)}
\]
and thus
\begin{equation}
\label{eq-G1stima}
 \|G_1 \|_{ H^1 ([0,T] ,H^{2/3}(\S) ) }\leq c \sqrt{1+T}\, \|\phi_0^\l\|_{H^2(\R^3)},\qquad\forall T>0.
\end{equation}

It is, then, left to estimate $\|G_2 \|_{ H^1 ([0,T] ,H^{2/3}(\S) ) }$. First, one can check by dominated convergence that
\[
 g_{2,\ell}'(t)=\imath\l\int_0^{+\infty} \f{r\,J^2_{\ell+1/2} (r)}{(r^2+\l)}\exp^{-\i t r^2}\,dr,
\]
so that, by \eqref{landau1},
\[
 \|g_{2,\ell}\|_{C^1([0,T])}\leq c_\l \int_0^{+\infty} \frac{r\,J^2_{\ell+1/2} (r)}{r^2+\l}\,dr\leq c_\l <+\infty,\qquad\forall T>0.
\]
Hence, arguing as in Part (i) and using \eqref{schauder}, there results that $\|G_2 \|_{ C^1 ([0,T] ,H^{3/2}(\S) ) }\leq c_\l\|q_0\|^{2\sigma+1}_{H^{3/2}(\S)}$, for every $T>0$, and thus, as
\[
  C^1 \big([0,T] ,H^{3/2}(\S) \big) \hookrightarrow H^1\big([0,T],H^{3/2}(\S)\big) \hookrightarrow  H^1\big([0,T],H^{2/3}(\S)\big),
\]
one obtains
\[
 \|G_2 \|_{ H^1 ([0,T] ,H^{2/3}(\S) ) }\leq c_\l\|q_0\|^{2\sigma+1}_{H^{3/2}(\S)},\qquad\forall T>0,
\]
which combined with \eqref{eq-G1stima} completes the proof.

\emph{Part (iii): proof of \eqref{fase1}}. Combining \eqref{fase2} with
\[
 H^{3/2}(\S)\hookrightarrow H^{2/3}(\S)\qquad\text{and}\qquad H^\al\big([0,T],H^{2/3}(\S)\big)\hookrightarrow  H^1\big([0,T],H^{2/3}(\S)\big),
\]
one can see that it is sufficient to estimate $\| \L \nu(q_0) \|_{ H^{\alpha}([0,T],H^{3/2}(\S) )}$. First we note that, for any function $g\in H^{3/2}(\S)$,  $g\equiv\chi_{[0,T]}g$ on $[0,T]\times\S$. Moreover, since
\[
\widehat{ \chi_{[0,T]}}(\ome) =\f{\imath}{\sqrt{2\pi}} \frac{\exp^{-\imath\ome T}-1}{\ome},
\]
$\chi_{[0,T]} g \in  H^{\gamma}\big([0,T],H^{3/2}(\S)\big) $, for every $\gamma\in\big(0,\f{1}{2}\big)$. Hence, by Corollary \ref{cor-reg2},
\beq \label{intermedia}
\| \L g \|_{ H^{\beta+1/7}([0,T],H^{3/2}(\S) )} \leq c_{\beta,T}\|g\|_{H^{3/2} (\S)},\qquad\forall \beta\in\big(0,\tf{1}{2}\big),
\eeq
and thus, setting $g=\nu(q_0)$, there results
\beq \label{intermedia2}
\| \L \nu(q_0) \|_{ H^{\alpha}([0,T],H^{3/2}(\S) )} \leq c_{\alpha,T} \|q_0\|_{H^{3/2}(\S)}^{2\sigma+1},\qquad\forall \alpha\in\big(0,\tf{9}{14}\big),
\eeq
which completes the proof.
\end{proof}


\section{Local well--posedness: proof of Theorem \ref{main} -- item (i)}
\label{sec-local}

The main tool for the proof of local well--posedness of \eqref{eq-cauchy} in $\D(\h)$ is establishing existence and uniqueness of the solutions of \eqref{eq-charge} with a suitable regularity.

\begin{prop} \label{carica}
Let $\beta\in\R$, $\sigma\geq1/2$ and $\psi_0=\phi_{0}^\l-\G \nu(q_0) \in \D(\h)$. Then:
\begin{itemize}
 \item[(i)] there exists $T_0>0$ for which there is a unique solution of \eqref{eq-charge} in $C^0\big([0,T_0], H^{3/2}(\S) \big)$;
 \item[(ii)] if $q$ is the unique solution of \eqref{eq-charge} in $C^0\big([0,T], H^{3/2}(\S) \big)$, for some $T>0$, then $q\in H^1\big([0,T],L^{2}(\S)\big)$.
\end{itemize}
\end{prop}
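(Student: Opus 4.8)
\emph{Proof plan.}

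\emph{Part (i).} The plan is to recast the charge equation \eqref{eq-charge} as the fixed point problem $q=\mathcal T(q)$ for
\[
\mathcal T(q)(t,\cdot):=F_0(t,\cdot)-\i\,\big(\L\nu(q)\big)(t,\cdot),
\]
and to run a Banach fixed point argument on the ball $X_{T_0,R}:=\{q:\|q\|_{C^0([0,T_0],H^{3/2}(\S))}\leq R\}$, with $R$ twice the ($T$--independent) bound for $\|F_0\|_{C^0([0,T],H^{3/2}(\S))}$ furnished by \eqref{sourceinfinito} and $T_0$ to be chosen small. \emph{Self--mapping:} for $q\in X_{T_0,R}$ the Schauder estimate \eqref{schauder} (legitimate since $\sigma\geq\tf12=\tf{[3/2]}2$) gives $\nu(q)\in L^\infty([0,T_0],H^{3/2}(\S))$, so \eqref{eq-final} yields $\|\L\nu(q)\|_{C^0([0,T_0],H^{3/2}(\S))}\leq 3CT_0^{1/3}c_{3/2}R^{2\sigma+1}$, which together with \eqref{sourceinfinito} is $\leq R$ for $T_0$ small; continuity in $t$ of $\L\nu(q)$ with values in $H^{3/2}(\S)$ is the ``Moreover'' part of Proposition \ref{prop-cont}, so $\mathcal T(q)\in X_{T_0,R}$. \emph{Contraction:} since $\nu$ need not be $C^2$ when $\sigma$ is close to $\tf12$, a direct contraction in $H^{3/2}(\S)$ is unavailable, so — exactly as in the proof of Proposition \ref{pro-BC} — I would contract in the weaker distance $d(q_1,q_2):=\|q_1-q_2\|_{C^0([0,T_0],L^2(\S))}$: by \eqref{eq-final} with $\mu=0$, the pointwise inequality \eqref{eq-nonlin}, and \eqref{infi} (so $\|q_i\|_{L^\infty(\S)}\leq c\|q_i\|_{H^{3/2}(\S)}\leq cR$),
\[
d\big(\mathcal T(q_1),\mathcal T(q_2)\big)\leq c\,T_0^{1/3}R^{2\sigma}\,d(q_1,q_2),
\]
a contraction for $T_0$ small. \emph{Completeness:} $X_{T_0,R}$ is $d$--complete by a Banach--Alaoglu argument (a $d$--Cauchy sequence converges in $C^0([0,T_0],L^2(\S))$, and the pointwise $H^{3/2}$--bound survives by weak lower semicontinuity), again as in Proposition \ref{pro-BC}. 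The fixed point $q$ then satisfies $q=\mathcal T(q)$, hence belongs to $C^0([0,T_0],H^{3/2}(\S))$ (both $F_0$ and $\L\nu(q)$ lie there) and $q(0)=F_0(0)=q_0$, since $\L$ vanishes at $t=0$.

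For uniqueness in all of $C^0([0,T_0],H^{3/2}(\S))$ I would take two solutions $q_1,q_2$ with the same datum, put $M:=\max_i\|q_i\|_{C^0([0,T_0],H^{3/2}(\S))}$, and use $q_1-q_2=-\i\,\L(\nu(q_1)-\nu(q_2))$ together with $\|I_\tau\|_{L^2(\S)\to L^2(\S)}\leq c\tau^{-2/3}$ (from \eqref{stimaL2}), \eqref{eq-nonlin} and \eqref{infi} to get
\[
\|(q_1-q_2)(t)\|_{L^2(\S)}\leq cM^{2\sigma}\int_0^t|t-s|^{-2/3}\|(q_1-q_2)(s)\|_{L^2(\S)}\ds;
\]
a singular Gronwall inequality (equivalently, iterating the contraction estimate over consecutive short subintervals, using that $q_1\equiv q_2$ already where it has been established) forces $q_1\equiv q_2$ on $[0,T_0]$.

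\emph{Part (ii).} Here the plan is a finite bootstrap on the time regularity, powered by the $1/7$--gain of $\L$. Using \eqref{fase2} I would split
\[
q=\widetilde F_0+q_0-\i\,\L\big(\nu(q)-\nu(q_0)\big),\qquad \widetilde F_0:=F_0-q_0-\i\,\L\nu(q_0)\in H^1\big([0,T],H^{2/3}(\S)\big),
\]
and set $h:=\nu(q)-\nu(q_0)$, which vanishes at $t=0$ because $q(0)=q_0$. The crucial point is that $q\mapsto\nu(q)$ maps $H^{\al}\big([0,T],L^2(\S)\big)$ into itself for each $\al\in(0,1)$: by \eqref{eq-nonlin} applied pointwise on $\S$ and the a priori bound $\|q\|_{L^\infty([0,T],L^\infty(\S))}\leq c\|q\|_{C^0([0,T],H^{3/2}(\S))}$ coming from \eqref{infi}, one has $\|\nu(q)(t)-\nu(q)(s)\|_{L^2(\S)}\lesssim\|q(t)-q(s)\|_{L^2(\S)}$, so the Gagliardo seminorm of $\nu(q)$ is dominated by that of $q$ (subtracting the time--constant $\nu(q_0)$ does not affect seminorms). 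Hence, \emph{if} $q\in H^{\al}\big([0,T],L^2(\S)\big)$ with $0\leq\al<1$ and $\al\neq\tf12$, then $h\in H^{\al}\big([0,T],L^2(\S)\big)$ with $h(0)=0$, Corollary \ref{cor-reg2} (case (i) for $\al<\tf12$, case (ii) for $\al>\tf12$) gives $\L h\in H^{\al+1/7}\big([0,T],L^2(\S)\big)$, and since $\widetilde F_0\in H^1([0,T],H^{2/3}(\S))\hookrightarrow H^{\al+1/7}([0,T],L^2(\S))$ we deduce $q\in H^{\al+1/7}\big([0,T],L^2(\S)\big)$. Starting from $q\in C^0([0,T],H^{3/2}(\S))\hookrightarrow H^0([0,T],L^2(\S))$ and applying this step seven times along $\al=0,\tf17,\tf27,\tf37,\tf47,\tf57,\tf67$ (the chain steps over $\tf12$ without hitting it) yields $q\in H^1\big([0,T],L^2(\S)\big)$, which is the claim.

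\emph{Main obstacle.} I expect the delicate part to be Part (i): the lack of $C^2$ regularity of $\nu$ at $\sigma=\tf12$ rules out a straightforward contraction at the natural regularity $H^{3/2}(\S)$ and forces Kato's two--metric scheme, after which one must still verify that the $H^{3/2}$--ball is a \emph{complete} metric space for the auxiliary $C^0L^2$ distance (the Banach--Alaoglu step) and that the weak--topology fixed point is genuinely $C^0$ in time into $H^{3/2}(\S)$ (the bootstrap via $q=\mathcal T(q)$). Part (ii) is more routine once one realizes that the bootstrap must be run at the $L^2(\S)$ level in space — which is exactly what makes the composition estimate \eqref{eq-nonlin} usable with no hypothesis on $\nu$ beyond $\sigma>0$ — while \eqref{fase2} isolates, once and for all, the component of the source that is not regularized in time by $\L$.
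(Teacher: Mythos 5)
Your proposal is correct and follows essentially the same route as the paper: a Banach fixed point for $\LL(q)=-\i\L\nu(q)+F_0$ on an $H^{3/2}(\S)$--ball contracted in the weaker $C^0L^2(\S)$ metric (with Banach--Alaoglu completeness) for part (i), and a seven--step $1/7$--bootstrap via Corollary \ref{cor-reg2} and the source estimates \eqref{fase1}--\eqref{fase2} for part (ii). The only (harmless) differences are that you run the bootstrap on the shifted equation $q-q_0=-\i\L(\nu(q)-\nu(q_0))+\widetilde F_0$ from the outset, whereas the paper switches to it only once the exponent passes $1/2$, and that you spell out uniqueness beyond the fixed--point ball via a singular Gronwall argument, which the paper leaves implicit.
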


\begin{proof}
It is convenient to divide the proof in two steps.

\emph{Step (i).} It is sufficient to show that, for $T>0$ sufficiently small, the map
\begin{equation}
\label{eq-mappa}
\LL (q) := -\i \L \nu (q)  + F_0,
\end{equation}
is a contraction in
\[
X:=\left\{ q\in  L^\infty\big([0,T], H^{3/2}(\S) \big): \| q\|_{  L^\infty([0,T], H^{3/2}(\S) ) }\leq R \right\} 
\]
for a fixed $R> \| F_0 \|_{ L^\infty([0,T], H^{3/2}(\S) ) }$, with respect to a proper metric that make $X$ complete. Continuity can be easily established afterwards combining \eqref{eq-final} and \eqref{sourceinfinito}.

Using again \eqref{eq-final}, \eqref{sourceinfinito} and \eqref{schauder}, we find that
\[
\| \LL( q)\|_{  L^\infty([0,T], H^{3/2}(\S) ) } \leq c\, T^{1/3} R^{2\sigma+1} + \| F_0 \|_{  L^\infty([0,T], H^{3/2}(\S) ) },\qquad\forall q\in X,
\]
and thus $X$ is preserved by $\LL$ for $T$ small enough. It is left to discuss contractivity with respect to a suitable metric. Consider the $L^\infty\big([0,T],L^2(\S)\big)$--one. It clearly makes $X$ complete since an $L^\infty\big([0,T],L^2(\S)\big)$--Cauchy sequence in $X$ converges in $L^\infty\big([0,T],L^2(\S)\big)$ to a limit which has to belong to $X$ since the sequence
is also weakly$^*$ convergent in $L^\infty([0,T], H^{3/2}(\S) )$ by Banach–Alaoglu. Then, fix $q_1,\, q_2\in \big(X,\|\cdot\|_{L^\infty([0,T],L^2(\S))}\big)$. Since
\begin{equation}
\label{eq-banaleutile}
 \LL(q_1)-\LL(q_2)=\L\big(\nu(q_2)-\nu(q_1)\big),
\end{equation}
combining \eqref{eq-final}, \eqref{eq-nonlin} and \eqref{schauder}, one can find that
\begin{align*}
\| \LL (q_1) - \LL (q_2) \|_{  L^\infty([0,T], L^2(\S) ) } 
& \leq c T^{1/3}  \|  \nu( q_2) - \nu(q_1)  \|_{  L^\infty([0,T], L^2(\S) ) } \\
& \leq c T^{1/3} \, R^{2\sigma}  \|   q_1 - q_2  \|_{ L^\infty([0,T], L^2(\S) )},
\end{align*}
and thus $\LL$ is contractive again for $T>0$ sufficiently small.

\emph{Step (ii).} Here we use a standard iterative bootstrap argument. Let $q$ be a solution of \eqref{eq-charge} in $ C^0\big([0,T], H^{3/2}(\S) \big)$. As a consequence $q \in L^2([0,T], L^{2}(\S) ) $, as well, and satisfies 
\beq \label{boot}
q = -\i \L \nu (q)+F_0.
\eeq
Since, arguing as before, $\nu(q)\in C^0\big([0,T], H^{3/2}(\S)\big )\subset L^2\big([0,T], L^2(\S) \big)$, by item (i) of Corollary \ref{cor-reg2} there results that $\L \nu(q ) \in H^{1/7} \big([0,T], L^{2}(\S) \big) $. In addition, as from \eqref{fase1} $F_0 \in H^{1/7} \big([0,T], L^{2}(\S) \big) $, by \eqref{boot} we have that $q\in H^{1/7} \big([0,T], L^{2}(\S) \big)$, which concludes the first iteration. Now, since by \eqref{eq-nonlin}
\[
\| \nu(q)(t) -\nu(q)(s) \|_{L^2(\S) } \leq \| q\|_{  C^0([0,T], H^{3/2}(\S) )}^{2\sigma}\| q(t) - q(s) \|_{ L^2 (\S) },\qquad\forall t,\,s\in[0,T],
\]
$\nu(q) \in H^{1/7} \big([0,T], L^{2}(\S) \big)$. Hence, arguing as before, one obtains $q\in H^{2/7} \big([0,T], L^{2}(\S) \big)$ and, by means of two further analogous iterations, one gets $q\in H^{4/7} \big([0,T], L^{2}(\S) \big)$.

Here, a new issue arises since both $\nu(q) \in H^{4/7} \big([0,T], L^{2}(\S) \big)$, preventing the use item (i) of Corollary \ref{cor-reg2}, and $\big\|\big(\nu(q)\big)(t,\cdot)\big\|_{L^2(\S)}\neq0$, preventing the use item (ii) of Corollary \ref{cor-reg2}. However, one can rewrite \eqref{boot} as
\beq
\label{boot2}
q-q_0 = -\i \L \big( \nu (q)-\nu(q_0) \big) + F_0-q_0 -\i \L \nu(q_0).
\eeq
Thus $\nu (q)-\nu(q_0)\in H^{4/7} \big([0,T], L^{2}(\S) \big)$ and $\big\|\big(\nu(q)\big)(t,\cdot)-\nu(q_0)\big\|_{L^2(\S)}=0$, so that one may exploit item (ii) of Corollary \ref{cor-reg2} getting $\L \big( \nu (q)-\nu(q_0) \big)\in H^{5/7} \big([0,T], L^{2}(\S) \big)$. Moreover, as by \eqref{fase2} we have $F_0-q_0 -\i \L \nu(q_0) \in H^{5/7} \big([0,T], L^{2}(\S) \big) $, there results $q-q_0\in H^{5/7} \big([0,T], L^{2}(\S) \big)$, whence $q\in H^{5/7} \big([0,T], L^{2}(\S) \big)$. Iterating such procedure twice more one finally gets the claim. 
\end{proof}

\begin{rem}
 Notice that, on any interval $[0,T]$ where \eqref{eq-charge} admits a solution, the nonlinearity enjoys the same regularity of $q$, that is
\beq
\label{eq-nureg}
\nu(q) \in C^0\big([0,T], H^{3/2}(\S) \big) \cap  H^1\big([0,T],L^2(\S)\big).
\eeq
\end{rem}

\begin{rem}
\label{sigma}
Step (i) of the proof above clearly shows the reasons why we have to require $\sigma\geq1/2$. Indeed, on the one hand, as we saw by Section \ref{sec-opLambda}, we are able to manage the operator $\L$ only on functions which are $H^\mu(\S)$--regular (with $\mu\geq0$) in space; while, on the other hand, we have to use \eqref{schauder} with $\mu>1$, which requires $\sigma\geq1/2$. This is a major difference with point delta models, where one may avoid the use of \eqref{schauder} by applying Fixed Point theorem in $L^\infty([0,T])$, since the spacial part is absent and $\L$ is a time--only operator.
\end{rem}

Now, before presenting the proof of item (i) of Theorem \ref{main}, we have to introduce a further auxiliary result. To this aim, denote by $V_t$ the restriction of the operator $U_t$ to functions defined on $\S$. As a consequence, its integral kernel is given by $\U (t, \x-\y)_{\big||\y|=1}$. Note also that we
\begin{equation}
\label{eq-group}
 V_{t+s}= U_t \, V_s= U_s \, V_t,
\end{equation}
by the group properties of $U_t$.

\begin{prop} \label{ricostruzione}
Let  $g \in H^1\big([0,T], L^2(\S)\big) $ and $\displaystyle h(t,\x) := \int_0^t\big( V_{t-s}  g(s)\big)(\x) \ds$. Then 
\begin{equation}
\label{eq-primastima}\|h\|_{ C^0([0,T], L^2 (\R^3) )} \lesssim_T \|g \|_{ L^2([0,T], L^2(\S))}.
\end{equation}
\end{prop}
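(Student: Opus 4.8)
The plan is to pass immediately to the spherical--harmonic/Fourier picture, in which $V_{t-s}$ becomes a Fourier multiplier, and to read off \eqref{eq-primastima} from Plancherel in time combined with the \emph{uniform} Bessel bound \eqref{landau2}. One point deserves care at the outset: the integral defining $h$ does not converge absolutely in $L^2(\R^3)$, since the free kernel $\U(t-s,\x-\y)$ carries a non--integrable $(t-s)^{-3/2}$ singularity at $s=t$ and $g(s)\delta_{\S}\notin L^2(\R^3)$, so the oscillation in $s$ is essential. I would therefore first prove the estimate for $g\in C_c^\infty\big((0,T)\times\S\big)$, where $h$ is given by an a.e.-convergent integral and the computation below is classical, and then define $h$ and conclude for every $g\in H^1\big([0,T],L^2(\S)\big)$ -- in fact for every $g\in L^2\big([0,T],L^2(\S)\big)$ -- by density, the final bound involving only $\|g\|_{L^2([0,T],L^2(\S))}$.

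For such a $g$, writing $g(s)=\somma g_{\ell,m}(s)Y_{\ell,m}$ and using that $V_{t-s}g(s)=U_{t-s}\big(g(s)\delta_{\S}\big)$, that $U_\tau$ acts on the Fourier side as multiplication by $\exp^{-\i|\k|^2\tau}$ (cf.\ \eqref{nota}), and that $\F[g(s)\delta_{\S}]$ is given by \eqref{eq-rinota}, I get
\[
\big(\F h(t,\cdot)\big)_{\ell,m}(|\k|)=(-\i)^\ell\,\frac{J_{\ell+1/2}(|\k|)}{\sqrt{|\k|}}\int_0^t \exp^{-\i|\k|^2(t-s)}\,g_{\ell,m}(s)\,\ds .
\]
Then Plancherel on $\R^3$, the $L^2(\S)$--orthonormality of $(Y_{\ell,m})$, the unimodularity of $\exp^{-\i|\k|^2 t}$ and the substitution $\ome=|\k|^2$ (so $|\k|\,{\rm d}|\k|=\tfrac12\,\dome$) turn this into
\[
\|h(t,\cdot)\|_{L^2(\R^3)}^2=\frac12\somma\int_0^{+\infty}J_{\ell+1/2}(\sqrt{\ome})^2\,\lf|\int_0^t\exp^{\i\ome s}\,g_{\ell,m}(s)\,\ds\ri|^2\dome .
\]

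At this point the estimate is one line. By \eqref{landau2}, $J_{\ell+1/2}(\sqrt{\ome})^2\leq c$ with $c$ independent of $\ell$ and $\ome$, while $\ome\mapsto\int_0^t\exp^{\i\ome s}g_{\ell,m}(s)\,\ds$ is, up to a fixed constant, the Fourier transform in time of $g_{\ell,m}\mathds{1}_{[0,t]}$, so that by Plancherel in $\ome$
\[
\int_0^{+\infty}\lf|\int_0^t\exp^{\i\ome s}\,g_{\ell,m}(s)\,\ds\ri|^2\dome\;\leq\;2\pi\,\|g_{\ell,m}\|_{L^2(0,t)}^2\;\leq\;2\pi\,\|g_{\ell,m}\|_{L^2(0,T)}^2 .
\]
Summing over $\ell$ and $m$ and using $\somma\|g_{\ell,m}\|_{L^2(0,T)}^2=\|g\|_{L^2([0,T],L^2(\S))}^2$ gives \eqref{eq-primastima} pointwise in $t$, even with a constant independent of $t$ and of $T$.

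It remains to upgrade the pointwise bound to continuity. For $t'\leq t$ in $[0,T]$, the group law \eqref{eq-group} gives $h(t)-h(t')=\int_{t'}^{t}V_{t-s}g(s)\,\ds+\big(U_{t-t'}-\mathbb{I}\big)h(t')$: the first term is $h$ at time $t$ for the datum $g\,\mathds{1}_{[t',t]}$, so by the estimate just proved its $L^2(\R^3)$--norm is $\lesssim\|g\|_{L^2([t',t],L^2(\S))}\to0$ by absolute continuity of the integral, while the second tends to $0$ because $h(t')\in L^2(\R^3)$ and $(U_\tau)_\tau$ is strongly continuous on $L^2(\R^3)$. I expect the only genuine obstacle to be the well--posedness of $h$ flagged in the first paragraph (non--absolute convergence of the defining integral); once that is settled, the whole argument is the short computation above, the half--order smoothing in time being entirely encoded in the $\ell$--uniform Bessel bound \eqref{landau2}.
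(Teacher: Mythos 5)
Your proof is correct, and at the decisive step it takes a genuinely different (and leaner) route than the paper. The paper expands $\lf|\int_0^t \exp^{\i k^2 s}g_{\ell,m}(s)\ds\ri|^2$ into a double integral in $(s,s')$, identifies the oscillatory kernel $O_\ell(s-s')=\int_0^{+\infty}kJ_{\ell+1/2}^2(k)\exp^{-\i k^2(s-s')}\dkm$, evaluates it in closed form by the contour argument of Appendix \ref{app-formula} (this is the sole purpose of \eqref{eq-integralbessel}), bounds $|O_\ell(\tau)|\lesssim|\tau|^{-2/3}$ uniformly in $\ell$ via \eqref{landau1}, and closes with a Schur test on $L^2(0,T)$, producing a constant $c_T\sim T^{1/3}$. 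You instead substitute $\ome=k^2$, pull out the uniform bound $J_{\ell+1/2}^2(\sqrt\ome)\leq c$ from \eqref{landau2}, and apply Plancherel in $\ome$ to $g_{\ell,m}\mathds{1}_{[0,t]}$ (extending the integral from $(0,+\infty)$ to $\R$ by positivity); this bypasses the entire appendix, and as a bonus yields a constant independent of $T$, which is stronger than the stated $\lesssim_T$. Your continuity argument (group law plus the quantitative estimate on $[t',t]$, versus the paper's appeal to dominated convergence) and your preliminary remark that the defining integral is not absolutely convergent in $L^2(\R^3)$ -- so that $h$ must be understood via the Fourier--side formula or by density from smooth data -- are both sound; the latter point is in fact glossed over in the paper as well. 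The one thing the paper's heavier machinery buys elsewhere is the explicit kernel $O_\ell$ and its $|\tau|^{-2/3}$ decay, but for this proposition alone your argument is complete and preferable.
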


\begin{proof}
First, using \eqref{fourier1} and \eqref{eq-rinota}, we obtain
\[
\F[h](t,\k) = \exp^{-\i |\k|^2 t} \somma (-\i)^\ell   Y_{\ell,m} \lf( \f{\k}{|\k|} \ri) \int_0^t \exp^{\i |\k|^2 s} \f{J_{\ell+1/2}(|\k|) }{\sqrt |\k|} g_{\ell,m} (s) \ds,
\]
so that, for every $t\in[0,T]$,
\begin{align}
\| h(t)\|_{L^2 (\R^3)}^2 & = \somma \int_0^{+\infty} k^2 \lf|  \f{J_{\ell+1/2}(k) }{\sqrt k} \int_0^t \exp^{\i k^2 s}  g_{\ell,m} (s) \ds \ri|^2 \, dk \nonumber  \\
& = \somma \int_0^{+\infty} k J_{\ell+1/2}^2 (k)\lf|   \int_0^t \exp^{\i k^2 s}  g_{\ell,m} (s) \ds \ri|^2 \, dk \nonumber\\
& =  \somma    \int_0^t\int_0^t  g_{\ell,m}^* (s)  g_{\ell,m} (s')   \lf( \int_0^\infty k J_{\ell+1/2}^2 (k)  \exp^{-\i k^2 (s-s') } \, dk \ri) \ds \ds' \nonumber \\
& =  \somma    \int_0^t g_{\ell,m}^* (s) \int_0^t O_\ell (s'-s) g_{\ell,m} (s')     \ds' \ds. \nonumber\\
& \leq \somma \|g_{\ell,m}\|_{L^2(0,T)}\bigg\|\int_0^t O_\ell (s'-(\cdot)) g_{\ell,m} (s')     \ds'\bigg\|_{L^2(0,T)},\label{assoluto}
\end{align}
with
\begin{equation}
\label{eq-Oelle}
 O_\ell(\tau):=\int_0^{+\infty}kJ_{\ell+1/2}^2(k)\exp^{\imath k^2\tau}\dkm.
\end{equation}
Moreover, as we show in Appendix \ref{app-formula},
\beq \label{eq-integralbessel}
O_\ell (\tau) = \frac{\exp^{\i(\ell+3/2)\frac{\pi}{2} } }{2\tau} \exp^{-\frac{\i}{2\tau} } J_{\ell+1/2} \lf( \f{1}{2\tau} \ri),
\eeq
which implies, by \eqref{landau1},
\beq 
|O_\ell (\tau) | \leq \f{c}{|\tau|^{2/3} }.
\eeq
Hence, using the Schur test between $L^2(0,T)$ and itself (e.g., \cite{HS-78})
\[
 \bigg\|\int_0^t O_\ell (s'-(\cdot)) g_{\ell,m} (s')     \ds'\bigg\|_{L^2(0,T)}\leq c_T \,\|g_{\ell,m}\|_{L^2(0,T)}
\]
and therefore, combining with \eqref{assoluto},
\[
\| h(t)\|_{L^2 (\R^3)}^2 \leq c_T  \somma \| g_{\ell, m} \|_{L^2(0,T)}^2  = \|g \|_{ L^2([0,T], L^2(\S))}^2,
\] 
which proves \eqref{eq-primastima} for the $L^\infty \big([0,T], L^2 (\R^3) \big)$--norm. It is, then, left to prove that $h \in C^0\big([0,T], L^2 (\R^3) \big)$. However, this can be easily checked arguing as before and using dominated convergence to prove that $\|h(t+\ep) -f (t) \|_{L^(\R^3) }\to 0$, as $\ep\to0$, for every $t\in[0,T]$.
\end{proof}

\begin{proof}[Proof of Theorem \ref{main} -- item (i)]
In view of Proposition \ref{carica}, it is sufficient to prove that, given a solution $q$ of \eqref{eq-charge} in $C^0\big([0,T],H^{3/2}(\S)\big)$, the function $\psi$ defined by \eqref{eq-duhamel} satisfies \eqref{eq-regularity} and \eqref{eq-cauchy} in $L^2(\R^3)$ for every $t\in[0,T]$. Indeed, arguing as in the linear case, one can prove that any solution of \eqref{eq-cauchy} satisfying \eqref{eq-regularity} has to fulfill \eqref{eq-duhamel}--\eqref{eq-charge} as well, and thus uniqueness for \eqref{eq-cauchy} is equivalent to uniqueness for \eqref{eq-charge}, which is proved by Proposition \ref{carica}.

As a first step, we recall that by Stone's Theorem
\begin{equation}
\label{eq-stone}
 \f{d}{ds}U_{-s}=-\imath\Delta U_{-s},
\end{equation}
so that
\begin{equation}
\label{eq-consstone}
\frac{d}{ds} \lf( -\i (-\Delta+\l)^{-1} \exp^{\i \l s}U_{-s} \ri) = \exp^{\i \l s}U_{-s}.
\end{equation}
Hence, combining \eqref{eq-duhamel} with \eqref{eq-consstone}, \eqref{eq-group}, the properties of the Green's potentials, the commutation between $U_t$ and $(-\Delta+\l)^{-1}$ and the integration by parts for operator-valued functions, there results
\begin{align*}
\psi (t) &= U_t \lf(\phi_0^\l -\G \nu(q_0) \ri) -\i \int_0^t V_{t-s} \nu \big(q(s)\big) \ds \\
&= U_t \lf(\phi_0^\l -\G  \nu(q_0) \ri) -\i U_t \int_0^t \exp^{\i \l s}V_{-s}\, \exp^{-\i \l s}\nu \big(q(s)\big) \ds \\
& = U_t \lf(\phi_0^\l -\G  \nu(q_0) \ri) -\i U_t \int_0^t \frac{d}{ds} \lf( -\i (-\Delta+\l)^{-1} \exp^{\i \l s}V_{-s} \ri) \, \exp^{-\i \l s}\nu \big(q(s)\big) \ds \\
&=  U_t \phi_0^\l -\G  \nu(q(t)) +(-\Delta+\l)^{-1} \int_0^t V_{t-s}  \lf[ -\i\l \nu \big(q(s)\big) + \partial_s\nu\big(q(s)\big)\ri] \ds\\
&=\phi^\l (t)-\G  \nu(q(t)),
\end{align*}
where
\beq \label{parteregolare}
\phi^\l (t) := U_t \varphi_0^\l +(-\Delta+\l)^{-1} \int_0^t V_{t-s}  \lf[ -\i\l \nu \big(q(s)\big) + \partial_s\nu\big(q(s)\big)\ri] \ds,
\eeq
(we omitted the $\x$ dependence and used $\partial_s$ for partial derivatives with respect to $s$ for the sake of simplicity). Now, again by the Stone's theorem $U_{(\cdot)} \phi_0^\l \in C^0\big([0,T], H^2(\R^3) \big)$. Moreover, by \eqref{eq-nureg}, we have that $\nu(q),  \partial_s\nu(q) \in L^2([0,T], L^2(\S))$ and, thus, by Proposition \ref{ricostruzione} the boundedness of $(-\Delta+\l)^{-1}:L^2(\R^3)\to H^{2}(\R^3)$ and \eqref{parteregolare}, there results that $\phi^\l\in C^0\big([0,T], H^2(\R^3) \big)$. Hence, since by \eqref{eq-duhamel}--\eqref{eq-charge} it is straightforward that $\psi_{|\S}\equiv q$, and since in view of Remark \ref{rem-opnorm}
\[
 \|\psi(t)\|_{\D(\h)}\leq \|\phi^\l(t)\|_{H^2(\R^3)}+(1+\l)\big\|\G\nu\big(q(t)\big)\big\|_{L^2(\R^3)},
\]
in order to get $\psi\in C^0\big([0,T], \D (\h)\big)$ it is sufficient to show that $\G\nu(q)\in C^0\big([0,T], L^2(\R^3) \big)$. However, this can be easily obtained as by Proposition \ref{carica}
\[
 \big\|\G\nu\big(q(t+h)\big)-\G\nu\big(q(t)\big)\big\|_{L^2(\R^3)}\leq \f{c}{\l}\|q\|_{C^0([0,T],H^{3/2}(\S))}^{2\sigma}\|q(t+h)-q(t)\|_{H^{3/2}(\S)}.
\]

Then, observing that the above regularity and \eqref{eq-duhamel} imply $\psi(0)\equiv\psi_0$, it is left to prove that $\psi\in C^{1}\big([0,T],L^2(\R^3)\big)$ and the the equation in \eqref{eq-cauchy} is satisfied in $L^2(\R^3)$, for all $t\in[0,T]$. However, straightforward calculations on  \eqref{parteregolare} yield
\[
\i \f{\partial\phi^\l}{\partial t} = -\Delta \phi^\l+\l\G\nu(q) +\i  \G\f{\partial\nu(q)}{\partial t},
\]
and thus, since $\f{\partial\psi}{\partial t}=\f{\partial\phi^\l}{\partial t}-\G\f{\partial\nu(q)}{\partial t}$, the regularity proved above implies that
\[
 \psi  \in C^1 \big([0,T], L^2(\S)\big),
\]
while from \eqref{eq-nonact} one obtains that
\[
 \imath \f{\partial\psi}{\partial t}=\h\psi.
\]
\end{proof}


\section{Conservation laws: proof of Theorem \ref{main} -- item (ii)}
\label{sec-cons}

This section is devoted to the proof of the conservation laws associated with \eqref{eq-cauchy}: the conservation of the mass, i.e. \eqref{massa}, and the conservation of the energy, i.e. \eqref{energia}. 

Clearly, in the following we tacitly assume that $\psi$ is the function defined by \eqref{eq-duhamel} which satisfies item (i) of Theorem \ref{main} on a fixed interval $[0,T]$, and that $q\equiv\psi_{|\S}$ satisfies \eqref{eq-charge} with the regularity provided by Proposition \ref{carica}. In such a way, we can neglect in the following proof any regularity issue, since the features of $\psi$ and $q$ make all the steps rigorous. In addition, we recall that all the scalar products below have to be meant as antilinear in the first component.

\begin{proof}[Proof of Theorem \ref{main} --  item (ii)]
 We divide the proof in two parts.
 
 \emph{Part (i): proof of \eqref{massa}.} Since $\psi\in C^1\big([0,T],L^2(\R^3)\big)$, we have that $M(t):=M\big[\psi(t)\big]$ is of class $C^1$.  Let us prove, then, that
\beq \label{massa1}
 M'(t)=0,\qquad\forall t\in[0,T].
\eeq
By definition
\[
 M'(t)= 2\mathrm{Re}\big\{\langle \psi(t) , \partial_t \psi(t) \rangle_{L^2(\R^3)}\big\}.
\]
As by \eqref{eq-cauchy} and \eqref{eq-nonact}
\[
\i \f{\partial\psi}{\partial t} = (-\Delta +\l )\phi^\l - \l \psi,
\]
using the properties of the Green's potential, \eqref{BC} and Proposition \ref{pro-BC} 
\begin{align*}
M'(t)
& = 2\text{Re}\left\{ \big\langle \psi(t) , -\i (-\Delta +\l )\phi^\l (t)  \big\rangle_{L^2(\R^3)}\right\} \\[.2cm]
& = 2\text{Re} \left\{\big\langle \G \nu\big(q(t)\big)  , \i (-\Delta +\l )\phi^\l (t)  \big\rangle_{L^2(\R^3)}\right\} \\[.2cm]
& = 2\text{Re} \left\{\imath\,\big\langle  \nu\big(q(t)\big)  , \phi^\l_{|\S} (t)  \big\rangle_{L^2(\S)}\right\} \\[.2cm]
& = 2\text{Re} \left\{\imath\,\big\langle  \nu\big(q(t)\big)  ,   q(t) + T^\l \nu( q(t)) \big\rangle_{L^2(\S)}\right\}\\[.2cm]
& = 2\text{Re} \left\{\imath\,\big\langle  \nu\big(q(t)\big)  ,  T^\l \nu( q(t)) \big\rangle_{L^2(\S)}\right\}.
\end{align*}
Finally, since by \eqref{eq-green} $G^\l$ is real--valued and even, $\big\langle  \nu\big(q(t)\big)  ,  T^\l \nu\big( q(t)\big) \big\rangle_{L^2(\S)}$ is equal to its complex conjugate and thus we obtain \eqref{massa1}.

\emph{Part (ii): proof of \eqref{energia}.} Preliminarily, we note that by Remark \ref{rem-altramap} the energy is well-defined for every $t\in[0,T]$. In contrast to Part (i), here we prove $E[\psi(t)]=:E(t)=E(0):=E[\psi_0]$, for every $t\in[0,T]$, by a direct inspection. First, using \eqref{eq-duhamel} and \eqref{eq-group} and recalling the definition of $V_t$ given after Remark \ref{sigma} and the fact that Sobolev homogeneous norms (denoted below by $[\cdot]_{\sn{H}^m(\R^3)}$) are invariant under the action of the free propagator $U_t$, we get 
\[
\|\nabla\psi(t)\|^2_{L^2(\R^3)} = \lf[\psi_0 -\i \int_0^t   V_{-s} \nu\big(q(s)\big) \, {\rm d }s   \ri]^2_{\sn{H}^1(\R^3)}.
\]
Then, expanding the square and integrating by parts in $\R^3$, (as the boundary term vanishes) we get
\begin{multline}
\label{eq-gradiente}
 \|\nabla\psi(t)\|^2_{L^2(\R^3)}\\
 = \|\nabla\psi_0\|^2_{L^2(\R^3)} \underbrace{- 2{\rm Re}\bigg\{ \i \,\big\langle \psi_0 , \int_0^t   -\Delta V_{-s} \nu\big(q(s)\big) \ds \big\rangle_{L^2(\R^3)} \bigg\}}_{=:A_1}+\underbrace{
 \lf[\int_0^t   V_{-s} \nu\big(q(s)\big) \ds  \ri]^2_{\sn{H}^1(\R^3)}}_{=:A_2}.
\end{multline}
Let us start by discussing $A_1$. From \eqref{eq-stone} and integration by parts for operator--valued functions there results
\[
 \int_0^t   -\Delta V_{-s} \nu\big(q(s)\big) \ds=-\imath V_{-t}\nu\big(q(t)\big)+\imath\nu(q_0)\delta_{\S}+\imath\int_0^tV_{-s}\partial_s\nu\big(q(t)\big)\ds,
\]
so that, recalling that $\psi_{|\S}\equiv q$ and noting that by definition $V_{-t}^*g=(U_tg)_{|\S}$,
\begin{multline*}
\i \,\big\langle \psi_0 , \int_0^t   -\Delta V_{-s} \nu\big(q(s)\big) \ds \big\rangle_{L^2(\R^3)} \\
=  \big\langle (U_t\psi_0)_{|\S} , \nu\big(q(t)\big) \big\rangle_{L^2(\S)} - \beta\|q_0\|_{L^{2\sigma+2}(\S)}^{2\sigma+2}  -\int_0^t  \big\langle (U_s\psi_0)_{|\S} , \partial_s \nu\big(q(s)\big) \big\rangle_{L^2(\S)} \ds.
\end{multline*}
As a consequence,
\begin{multline}
 \label{eq-A_1}
 A_1=2\beta\|q_0\|_{L^{2\sigma+2}(\S)}^{2\sigma+2}-2{\rm Re}\left\{\big\langle \nu\big(q(t)\big), (U_t\psi_0)_{|\S}  \big\rangle_{L^2(\S)}\right\}+\\[.2cm]
 +2{\rm Re}\bigg\{\int_0^t  \big\langle (U_s\psi_0)_{|\S} , \partial_s \nu\big(q(s)\big) \big\rangle_{L^2(\S)} \ds\bigg\}.
\end{multline}
On the other hand, concerning $A_2$, we first see by the Fubini theorem that
\[
 A_2=2{\rm Re}\bigg\{\int_0^t\int_0^s\big\langle V_{-s}\nu\big(q(s)\big),V_{-\tau}\nu\big(q(\tau)\big)\big\rangle_{\sn{H}^1(\R^3)}\dtau\ds\bigg\}.
\]
Moreover, using again \eqref{eq-stone} and integration by parts (both in $\R^3$ and for operator--valued functions) and observing that \eqref{eq-kernI} and \eqref{eq-I} yield $V_{-t}^*V_{-s}=I_{t-s}$, there results
\begin{align*}
 \int_0^t\int_0^s\big\langle V_{-s}\nu\big(q(s)\big),V_{-\tau}\nu\big(q(\tau)\big)\big\rangle_{\sn H^1(\R^3)}\dtau\ds & \\[.2cm]
 & \hspace{-4cm}=\imath\int_0^t\big\langle(\partial_s V_{-s})\nu\big(q(s)\big),\int_0^sV_{-\tau}\nu\big(q(\tau)\big)\dtau\big\rangle_{L^2(\R^3)}\ds\\[.2cm]
 & \hspace{-3.5cm}+\imath\big\langle V_{-t}\nu\big(q(t)\big),\int_0^tV_{-s}\nu\big(q(s)\big)\ds\big\rangle_{L^2(\R^3)}+\\[.2cm]
 & \hspace{-3.5cm}-\imath\int_0^t\big\langle V_{-s}\partial_s\nu\big(q(s)\big),\int_0^sV_{-\tau}\nu\big(q(\tau)\big)\dtau\big\rangle_{L^2(\R^3)}\ds+\\[.2cm]
 & \hspace{-3.5cm}-\imath\int_0^t\big\langle V_{-s}\nu\big(q(s)\big),V_{-s}\nu\big(q(s)\big)\big\rangle_{L^2(\R^3)}\ds\\[.2cm]
 & \hspace{-4cm}=\imath\,\big\langle\nu\big(q(t)\big),\int_0^tI_{t-s}\nu\big(q(s)\big)\ds\big\rangle_{L^2(\S)}-\imath\int_0^t\big\|V_{-s}\nu\big(q(s)\big)\big\|_{L^2(\R^3)}^2\ds+\\[.2cm]
 & \hspace{-3.5cm}-\imath\int_0^t\int_0^s\big\langle \partial_s\nu\big(q(s)\big),I_{s-\tau}\nu\big(q(\tau)\big)\big\rangle_{L^2(\S)}\dtau\ds
\end{align*}
and thus
\begin{multline}
 \label{eq-A_2}
 A_2=2{\rm Re}\bigg\{\imath\,\big\langle\nu\big(q(t)\big),\int_0^tI_{t-s}\nu\big(q(s)\big)\ds\big\rangle_{L^2(\S)}\bigg\}+\\[.2cm]
 -2{\rm Re}\bigg\{\imath\int_0^t\int_0^s\big\langle \partial_s\nu\big(q(s)\big),I_{s-\tau}\nu\big(q(\tau)\big)\big\rangle_{L^2(\S)}\dtau\ds\bigg\}.
\end{multline}
Summing up, combining \eqref{eq-gradiente}, \eqref{eq-A_1} and \eqref{eq-A_2},
\begin{align*}
 \|\nabla\psi(t)\|^2_{L^2(\R^3)} =\: & \|\nabla\psi_0\|^2_{L^2(\R^3)}+2\beta\|q_0\|_{L^{2\sigma+2}(\S)}^{2\sigma+2}+\underbrace{2{\rm Re}\left\{\big\langle \nu\big(q(t)\big), -(U_t\psi_0)_{|\S}  \big\rangle_{L^2(\S)}\right\}}_{=:B_1}+\\
 & +\underbrace{2{\rm Re}\bigg\{\big\langle\nu\big(q(t)\big),\imath\int_0^tI_{t-s}\nu\big(q(s)\big)\ds\big\rangle_{L^2(\S)}\bigg\}}_{=:B_2}+\\
 & +\underbrace{2{\rm Re}\bigg\{\int_0^t  \big\langle (U_s\psi_0)_{|\S} , \partial_s \nu\big(q(s)\big) \big\rangle_{L^2(\S)} \ds\bigg\}}_{=:B_3}+\\
 & \underbrace{-2{\rm Re}\bigg\{\imath\int_0^t\int_0^s\big\langle \partial_s\nu\big(q(s)\big),I_{s-\tau}\nu\big(q(\tau)\big)\big\rangle_{L^2(\S)}\dtau\ds\bigg\}}_{=:B_4}.
\end{align*}
Now, using \eqref{eq-charge}, \eqref{source} and \eqref{eq-Lambda}, we find that
\begin{align*}
 B_1+B_2 & =-2\beta\|q(t)\|_{L^{2\sigma+2}(\S)}^{2\sigma+2}\\[.2cm]
 B_3+B_4 & =2{\rm Re}\bigg\{\int_0^t\big\langle\partial_s\nu\big(q(s)\big),q(s)\big\rangle_{L^2(\S)}\ds\bigg\},
\end{align*}
which entails, integrating by parts in $s$,
\[
 \|\nabla\psi(t)\|^2_{L^2(\R^3)} = \|\nabla\psi_0\|^2_{L^2(\R^3)}-2{\rm Re}\bigg\{\int_0^t\big\langle\partial_sq(s),\nu\big(q(s)\big)\big\rangle_{L^2(\S)}\ds\bigg\}.
\]
Finally, as
\[
 2{\rm Re}\left\{\big\langle\partial_sq(s),\nu\big(q(s)\big)\big\rangle_{L^2(\S)}\right\}=\frac{\beta}{\sigma+1}\frac{d}{ds}\|q(t)\|_{L^{2\sigma+2}(\S)}^{2\sigma+2},
\]
suitably rearranging terms, one recovers \eqref{energia}.
\end{proof}


\section{Global well--posedness: proof of Theorem \ref{main2}}
\label{sec-global}

In this section we discuss global well--posedness of \eqref{eq-cauchy} in $\D(\h)$; that is, we discuss under which assumptions one can prove that the parameter $T^*$, defined by \eqref{eq-tmax}, is equal to $+\infty$. However, in view of the arguments developed in Section \ref{sec-local}, one can see that
\begin{equation}
\label{eq-tmaxeq}
 T^*=\sup\left\{T>0:\text{\eqref{eq-charge} admits a unique solution in }C^0\big([0,T],H^{3/2}(\S)\big)\right\},
\end{equation}
so that global well--posedness of \eqref{eq-cauchy} in $\D(\h)$ turns out to be equivalent to global well--posedness of \eqref{eq-charge} in $H^{3/2}(\S)$, which is the issue that we actually address below.


\subsection{Global well--posedness for small data: proof of Theorem \ref{main2} -- item (i)}
\label{subsec-globalsmall}

The former strategy to prove a global--well posedness result is to find a contraction argument in $C^0\big([0,+\infty),H^{3/2}(\S)\big)$ analogous to the one used to prove item (i) of Proposition \ref{carica}.

\begin{proof}[Proof of Theorem \ref{main2} -- item (i)]
It is sufficient to show that, whenever $\psi_0=\phi_0^\la-\G\nu(q_0)$ is such that $\|\phi_0^\la\|_{H^2(\R^3)}$ is small enough, the map $\LL$ defined by \eqref{eq-mappa} is a contraction in
\[
Y:=\left\{ q\in  L^\infty \big([0,+\infty), H^{3/2}(\S)\big):\| q\|_{  L^\infty([0,+\infty),  H^{3/2}(\S) ) }\leq L\right\}
\]
for some suitable $L>0$ (that we fix below), with respect to a proper metric that make $Y$ complete. As pointed out in the proof of Proposition \ref{carica}, continuity can be easily established ex--post by using \eqref{eq-final} and \eqref{sourceinfinito}.

As a first step, we have to prove that $\LL$ maps $Y$ into itself. Fix, then $q\in Y$. Preliminarily we note that
\[
 \big\|\big(\L\nu(q)\big)(t)\big\|_{L^2(\S)}\leq\int_0^t\big\|I_s\nu\big(q(t-s)\big)\big\|_{L^2(\S)}\ds,\qquad\forall t\geq0.
\]
On the one hand, when $t\in[0,1]$, using \eqref{disp} with $r=p=2$ and \eqref{schauder}, there results
\begin{align}
\label{eq-tpiccoli}
 \big\|\big(\L\nu(q)\big)(t)\big\|_{L^2(\S)}\leq & c\int_0^t\frac{1}{s^{2/3}}\big\|\nu\big(q(t-s)\big)\big\|_{L^2(\S)}\ds\leq c\int_0^t\frac{1}{s^{2/3}}\|q(t-s)\|_{H^{3/2}(\S)}^{2\sigma+1}\ds\nonumber\\[.2cm]
 \leq & c\|q\|_{L^\infty([0,+\infty),H^{3/2}(\S))}^{2\sigma+1}\int_0^t\frac{1}{s^{2/3}}\ds\leq c\|q\|_{L^\infty([0,+\infty),H^{3/2}(\S))}^{2\sigma+1}.
\end{align}
On the other hand, when $t>1$
\begin{equation}
\label{eq-tgrandi}
 \big\|\big(\L\nu(q)\big)(t)\big\|_{L^2(\S)}\leq\int_0^1\big\|I_s\nu\big(q(t-s)\big)\big\|_{L^2(\S)}\ds+\int_1^t\big\|I_s\nu\big(q(t-s)\big)\big\|_{L^2(\S)}\ds.
\end{equation}
Thus, since arguing as before 
\[
 \int_0^1\big\|I_s\nu\big(q(t-s)\big)\big\|_{L^2(\S)}\ds\leq c\|q\|_{L^\infty([0,+\infty),H^{3/2}(\S))}^{2\sigma+1}
\]
and since using \eqref{disp} with $r=\infty,\,p=1$ (and \eqref{schauder})
\begin{align*}
\int_1^t\big\|I_s\nu\big(q(t-s)\big)\big\|_{L^2(\S)}\ds\leq & c\int_1^t\big\|I_s\nu\big(q(t-s)\big)\big\|_{L^\infty(\S)}\ds\leq c\int_1^t\frac{1}{s^{3/2}}\big\|\nu\big(q(t-s)\big)\big\|_{L^1(\S)}\ds\\[.2cm]
\leq & c\int_1^t\frac{1}{s^{3/2}}\big\|\nu\big(q(t-s)\big)\big\|_{L^2(\S)}\ds\leq c\int_1^t\frac{1}{s^{3/2}}\|q(t-s)\|_{H^{3/2}(\S)}^{2\sigma+1}\ds\\[.2cm]
 \leq & c\|q\|_{L^\infty([0,+\infty),H^{3/2}(\S))}^{2\sigma+1}\int_1^t\frac{1}{s^{3/2}}\ds\leq c\|q\|_{L^\infty([0,+\infty),H^{3/2}(\S))}^{2\sigma+1},
\end{align*}
\eqref{eq-tpiccoli} and \eqref{eq-tgrandi} (and \eqref{infi}) yield
\begin{equation}
\label{eq-stimaL2}
 \| \L \nu (q) \|_{  L^\infty([0,+\infty,  L^2(\S) ) }\leq c\|q\|_{L^\infty([0,+\infty),H^{3/2}(\S))}^{2\sigma+1}.
\end{equation}
Now, combining \eqref{eq-stimaL2} with \eqref{eq-Lambdadec} and with the representation of the Laplace--Beltrami operator in spherical harmonics (i.e., \eqref{eq-diagLB}) and arguing as before, we also find that
\begin{multline}
\label{eq-stimasemi}
\| \L \nu (q) \|_{  L^\infty([0,+\infty),  \sn H^{3/2}(\S) ) }\\
=\| \L  (-\Delta_{\S})^{3/4} \nu (q) \|_{  L^\infty([0,+\infty),  L^2(\S) ) }\leq c\|q\|_{L^\infty([0,+\infty),H^{3/2}(\S))}^{2\sigma+1}.
\end{multline}
As a consequence, from \eqref{eq-stimaL2} and \eqref{eq-stimasemi}
\[
\| \LL (q) \|_{ L^\infty([0,+\infty),  H^{3/2}(\S) )} \leq \| F_0 \|_{ L^\infty([0,+\infty),  H^{3/2}(\S) )} + c L^{1+2\sigma},
\]
so that, in view of \eqref{sourceinfinito} and Remark \ref{rem-bastauno}, if $L<(2c)^{-2\sigma}$ and $\|\phi_0^\la\|_{H^2(\R^3)}$ is such that $ \| F_0 \|_{ L^\infty([0,+\infty),  H^{3/2}(\S) )}<L/2$, then $\| \LL g \|_{ L^\infty([0,+\infty),  H^{3/2}(\S) )}<L$, which proves the claim. 

Finally, it is left to discuss the contractivity with respect to a suitable metric. Consider the $L^\infty\big([0,+\infty),  L^2(\S) \big)$--one. Arguing as in the proof of Proposition \ref{carica}, this clearly makes $Y$ complete. Moreover, using \eqref{eq-banaleutile} and \eqref{eq-nonlin} and slightly adapting the proof of \eqref{eq-stimaL2}, we have that
\beq 
\| \LL (q_1) -\LL (q_2) \|_{ L^\infty([0,+\infty),  L^2(\S) ) } \leq c L^{2\sigma} \| q_1 -q_2 \|_{ L^\infty([0,+\infty),  L^2(\S) ) }
\eeq 
and therefore, with the previous choice of $L$, also contractivity is established.
\end{proof}


\subsection{Global well--posedness in the defocusing case: proof of Theorem \ref{main2} -- item (ii)}
\label{subsec-globaldef}

In view, again, of \eqref{eq-tmaxeq}, the latter strategy to prove global well--posedness is to adapt a classical \emph{blow--up alternative} argument to \eqref{eq-charge}.

\begin{lem}[Blow-up alternative]
 \label{lem-bualternative}
 Let $\beta\in\R$, $\sigma\geq1/2$, $\psi_0=\phi_0^\la-\G\nu(q_0)\in\D(\h)$ and $q$ the unique solution of \eqref{eq-charge} on $[0,T^*)$. Then,
 \[
  \text{either}\qquad T^*=+\infty\qquad\text{or}\qquad\limsup_{t\to T^*}\|q(t)\|_{H^{3/2}(\S)}=+\infty.
 \]
\end{lem}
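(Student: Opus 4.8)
The plan is to run the standard blow--up alternative argument adapted to the charge equation \eqref{eq-charge}, exploiting the equivalent characterization \eqref{eq-tmaxeq} of $T^*$: this reduces the statement to showing that if $T^*<+\infty$ and $\limsup_{t\to T^*}\|q(t)\|_{H^{3/2}(\S)}<+\infty$, then $q$ can be continued past $T^*$, contradicting maximality. The engine of the argument is that, by Step (i) of the proof of Proposition \ref{carica}, for $T>0$ the local existence time for \eqref{eq-charge} (with $R=2\|F_0\|_{C^0([0,T],H^{3/2}(\S))}$) depends on the data only through an upper bound for $\|F_0\|_{C^0([0,T],H^{3/2}(\S))}$ which, by \eqref{sourceinfinito}, is uniform in $T$. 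So I assume $T^*<+\infty$ and set $M:=\sup_{t\in[0,T^*)}\|q(t)\|_{H^{3/2}(\S)}<+\infty$; recall that for every $T<T^*$ item (i) of Theorem \ref{main} gives $\psi\in C^0([0,T],\dh)$ with $\psi(t)=\phi^\l(t)-\G\nu(q(t))$, $\phi^\l(t)\in H^2(\R^3)$.

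The first --- and hardest --- step is to obtain a uniform bound $C_1:=\sup_{t\in[0,T^*)}\|\phi^\l(t)\|_{H^2(\R^3)}<+\infty$. Starting from the representation \eqref{parteregolare} of $\phi^\l$, Stone's theorem (so that $\|U_t\phi_0^\l\|_{H^2(\R^3)}=\|\phi_0^\l\|_{H^2(\R^3)}$), Proposition \ref{ricostruzione}, and the boundedness of $(-\Delta+\l)^{-1}\colon L^2(\R^3)\to H^2(\R^3)$ reduce this to bounding $\nu(q)$ and $\partial_t\nu(q)$ in $L^2\big([0,T^*),L^2(\S)\big)$. The bound on $\nu(q)$ is immediate from \eqref{schauder}, since $\|\nu(q(t))\|_{L^2(\S)}\leq\|\nu(q(t))\|_{H^{3/2}(\S)}\leq c\,M^{2\sigma+1}$ for all $t<T^*$. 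For $\partial_t\nu(q)$ I would first upgrade the time regularity of $q$ to $q\in H^1\big([0,T^*),L^2(\S)\big)$ by re-running, on each $[0,T]$ with $T<T^*$, the finite bootstrap of item (ii) of Proposition \ref{carica} while keeping track of the constants: since $T^*<+\infty$, the constants produced by Corollary \ref{cor-reg2} and by \eqref{fase1}--\eqref{fase2} stay bounded on $[0,T^*]$, and the $H^{3/2}(\S)$--norm of $q$ is bounded by $M$, so $\|q\|_{H^1([0,T],L^2(\S))}$ is bounded uniformly in $T<T^*$. Then \eqref{eq-nonlin} gives $\|\partial_t\nu(q)\|_{L^2([0,T^*),L^2(\S))}\leq M^{2\sigma}\|\partial_t q\|_{L^2([0,T^*),L^2(\S))}<+\infty$, and $C_1<+\infty$ follows.

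Next I would restart \eqref{eq-charge} at a generic $t_0\in[0,T^*)$. Since $\psi(t_0)=\phi^\l(t_0)-\G\nu(q(t_0))\in\dh$ with regular part bounded by $C_1$ in $H^2(\R^3)$ and trace bounded by $M$ in $H^{3/2}(\S)$, estimate \eqref{sourceinfinito} yields $\|\widetilde F_0\|_{C^0([0,T],H^{3/2}(\S))}\leq C_2$ for every $T>0$, where $\widetilde F_0(s):=(U_s\psi(t_0))_{|\S}$ is the source of the restarted problem and $C_2$ is independent of $t_0$. By the remark above on Proposition \ref{carica} (taking $R=2C_2$), there is $\delta>0$ depending only on $C_2$, hence independent of $t_0$, such that \eqref{eq-charge} restarted at $t_0$ admits a unique solution $\widetilde q\in C^0([0,\delta],H^{3/2}(\S))$.

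Finally I would glue and conclude. Using Duhamel \eqref{eq-duhamel} at time $t_0$, the group law \eqref{eq-group}, and the relation $(V_\tau g)_{|\S}=I_\tau g$ between $V_\tau$ and the operators $I_\tau$, one checks the \emph{cocycle identity} $\widetilde F_0(s)=F_0(t_0+s)-\i\int_0^{t_0}I_{t_0+s-\tau}\nu(q(\tau))\,d\tau$; a short change of variables in the integral defining $\L$ then shows that the concatenation $\bar q$ --- equal to $q$ on $[0,t_0]$ and to $\widetilde q(\cdot-t_0)$ on $[t_0,t_0+\delta]$ --- solves \eqref{eq-charge} on $[0,t_0+\delta]$ and belongs to $C^0([0,t_0+\delta],H^{3/2}(\S))$. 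Choosing $t_0>T^*-\delta$ produces a solution of \eqref{eq-charge} in $C^0([0,t_0+\delta],H^{3/2}(\S))$ on an interval strictly containing $[0,T^*)$, contradicting \eqref{eq-tmaxeq}; hence either $T^*=+\infty$ or $\limsup_{t\to T^*}\|q(t)\|_{H^{3/2}(\S)}=+\infty$. The hard part is the uniform $H^2(\R^3)$ control of $\phi^\l$ up to $T^*$: in contrast with the point--interaction models, it cannot be read off from the conserved quantities and is forced to pass through the (bootstrapped) time regularity of the charge.
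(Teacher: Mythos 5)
Your proposal is correct in its overall architecture and reaches the conclusion by the same skeleton as the paper: assume $T^*<+\infty$ and $\sup_{t<T^*}\|q(t)\|_{H^{3/2}(\S)}\leq M$, restart the charge equation at a time $t_0$ close to $T^*$, observe that the restarted problem has a local existence time bounded below uniformly in $t_0$ (because the contraction time in Proposition \ref{carica} depends only on an upper bound for the $L^\infty([0,T],H^{3/2}(\S))$--norm of the source), and glue. Indeed, the cocycle identity you write,
\[
\widetilde F_0(s)=F_0(t_0+s)-\i\int_0^{t_0}I_{t_0+s-\tau}\,\nu\big(q(\tau)\big)\,d\tau,
\]
is exactly the decomposition \eqref{eq-newF}--\eqref{eq-newterm} that the paper takes as its starting point (with $t_0=T_\ep$).

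The genuine difference lies in how the restarted source is bounded, and here your route is much heavier than necessary. You bound $\widetilde F_0$ by viewing it as $(U_s\psi(t_0))_{|\S}$ and invoking \eqref{sourceinfinito} for the new datum $\psi(t_0)\in\D(\h)$; this forces you to control $\|\phi^\l(t_0)\|_{H^2(\R^3)}$ uniformly up to $T^*$, which in turn forces a uniform--in--$T$, constant--tracked version of the $H^1([0,T],L^2(\S))$ bootstrap of Proposition \ref{carica}(ii) in order to bound $\partial_t\nu(q)$ in $L^2$ and apply Proposition \ref{ricostruzione} to \eqref{parteregolare}. The paper never leaves the charge level: it estimates the two pieces of $\widetilde F_0$ separately --- $F_0(T_\ep+\cdot)$ by \eqref{sourceinfinito} applied to the \emph{original} datum, and the tail term $A$ directly via Lemma \ref{lem-sobI} with $\mu=3/2$, $z=1$ together with \eqref{schauder}, since
\[
\|A(s)\|_{H^{3/2}(\S)}\leq c\int_0^{T_\ep}\frac{\|q(\tau)\|_{H^{3/2}(\S)}^{2\sigma+1}}{(T_\ep+s-\tau)^{2/3}}\,d\tau\leq c\,M^{2\sigma+1}(T^*)^{1/3},
\]
which uses only the contradiction hypothesis on $q$. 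So the entire first (and, as you admit, ``hardest'') step of your argument --- the uniform $H^2(\R^3)$ control of the regular part and the bootstrap behind it --- is not needed: your own cocycle identity already exhibits the quantity that the paper bounds in two lines. Your version is not wrong (the constants in Corollary \ref{cor-reg2} and in \eqref{fase1}--\eqref{fase2} are indeed bounded for $T\leq T^*$, though you assert rather than verify this), but the detour buys nothing: the uniform $H^2$ bound on $\phi^\l$ is a by--product of the continuation, not a prerequisite for it.
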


\begin{proof}
 Let $T^*<+\infty$ and assume, by contradiction, that
 \begin{equation}
  \label{eq-ipass}
  \|q(t)\|_{H^{3/2}(\S)}\leq C,\qquad\forall t\in[0,T^*).
 \end{equation}
 In addition, set $T_\ep:=T^*-\ep$, for some $\ep$ to be fixed later, and define $\widetilde{q}(s):=q(T_\ep+s)$. By \eqref{eq-charge} and \eqref{eq-Lambda}, one finds that $\widetilde{q}$ satisfies
 \begin{equation}
  \label{eq-chargebis}
  \widetilde{q}(s,\x)+\imath\big(\Lambda\nu(\widetilde{q})\big)(s,\x)=\widetilde{F}_0(s,\x),
 \end{equation}
 where
 \begin{equation}
 \label{eq-newF}
  \widetilde{F}_0(s,\x):=F_0(T_\ep+s,\x)+A(s,\x),
 \end{equation}
 with $F_0$ given by \eqref{source} and 
 \begin{equation}
 \label{eq-newterm}
  A(s,\x):=-\imath\int_0^{T_\ep}\big(I_{T_\ep+s-\tau}\nu(q(\tau))\big)(\x)\dtau.
 \end{equation}
 Now, if one may choose $\ep$ such that \eqref{eq-chargebis} admits a unique solution in $C^0\big([0,\widetilde{T}],H^{3/2}(\S)\big)$ with $\widetilde{T}>\ep$, then, setting $q(t)=\widetilde{q}(t-T_\ep)$, one finds a solution of \eqref{eq-charge} beyond $T^*$, which contradicts \eqref{eq-tmaxeq}.
 
 Preliminarily, arguing as in the proof of Proposition \ref{carica}, one can see that $\widetilde{T}$ is nothing but a suitable contraction time for the map
 \[
  \widetilde{\LL}(\widetilde{q}):=-\imath\Lambda\nu(\widetilde{q})+\widetilde{F}_0
 \]
 in $C^0\big([0,\widetilde{T}],H^{3/2}(\S)\big)$. Moreover, a direct inspection of the proof of Proposition \ref{carica} shows that any positive time $T<2^{-6\sigma}\|F_0\|_{L^\infty([0,T],H^{3/2}(\S))}^{-6\sigma}$ is a suitable contraction time for the map defined by \eqref{eq-mappa}. As a consequence, any time $\widetilde{T}<2^{-6\sigma}\|\widetilde{F}_0\|_{L^\infty([0,\widetilde{T}],H^{3/2}(\S))}^{-6\sigma}$ is a suitable contraction time for $\widetilde{\LL}$. Let us estimate, then, $\|\widetilde{F}_0\|_{L^\infty([0,T],H^{3/2}(\S))}$. By \eqref{sourceinfinito}
 \[
  \|F_0(T_\ep+\cdot)\|_{L^\infty([0,T],H^{3/2}(\S))}\leq c_\la\big(\|\phi_0^\la\|_{H^2(\R^3)}+\|q_0\|_{H^{3/2}(\S)}^{2\sigma+1}\big),\qquad\forall T>0.
 \]
 Furthermore, using \eqref{stimasobo} with $\mu=3/2$ and $z=1$ and \eqref{schauder} one obtains that
 \[
  \|I_{T_\ep+s-\tau}\nu(q(\tau))\|_{H^{3/2}(\S)}\leq\f{c}{(T_\ep+s-\tau)^{2/3}}\|q(\tau)\|_{H^{3/2}(\S)}^{2\sigma+1}
 \]
 and thus, arguing as in the Proof of Proposition \ref{prop-cont} and using \eqref{eq-ipass},
 \[
  \|A\|_{L^\infty([0,T],H^{3/2}(\S))}\leq3cCT^*,\qquad\forall T>0.
 \]
 Therefore, by \eqref{eq-newF}, there exists a constant $C_{\la,\sigma,C,T^*,\phi_0^\la,q_0}>0$ such that
 \[
  \|\widetilde{F}_0\|_{L^\infty([0,T],H^{3/2}(\S))}\leq C_{\la,\sigma,C,T^*,\phi_0^\la,q_0},\qquad\forall T>0.
 \]
 Owing to this fact, any $\widetilde{T}<2^{-6\sigma}C_{\la,\sigma,C,T^*,\phi_0^\la,q_0}^{-6\sigma}$ is a suitable contraction time. Finally, since $C_{\la,\sigma,C,T^*,\phi_0^\la,q_0}^{-6\sigma}$ does not depend on $\ep$, one can chose (for instance) $\widetilde{T}=2^{-6\sigma-1}C_{\la,\sigma,C,T^*,\phi_0^\la,q_0}^{-6\sigma}$ and $\ep=2^{-6\sigma-2}C_{\la,\sigma,C,T^*,\phi_0^\la,q_0}^{-6\sigma}$ to obtain the aimed contradiction.
\end{proof}

Finally, we have all the ingredients to prove global well--posedness in the defocusing case for $\sigma< 4/5$ (we mention that the idea of the proof takes its cue from \cite{GV-84}).

\begin{proof}[Proof of Theorem \ref{main2} -- item (ii)]
We want to prove that, assuming $T^*<+\infty$, yields $\|q(t)\|_{H^{3/2}(\S)}\leq C$, for every $t\in[0,T^*)$, thus contradicting Lemma \ref{lem-bualternative}. Note that, whenever one assumes $T^*<+\infty$, $C$ may depend on $T^*$.

Suppose then, by contradiction, that $T^*<+\infty$. Combining \eqref{massa} and \eqref{energia} with $\beta>0$, we have
\beq
\label{costanti}
\| \psi(t) \|_{H^1(\R^3)} \leq c, \qquad \| q(t)\|_{L^{2\sigma +2}(\S)} \leq c,\qquad\forall t\in [0,T^*),
\eeq
so that, by classical Trace theorems and Sobolev embeddings,
\beq 
\label{costantiq}
\| q(t) \|_{H^{1/2}(\S)} \leq c, \qquad \| q(t) \|_{L^{4} (\S)} \leq c,\qquad\forall t\in [0,T^*).
\eeq
Now, in order to prove the contradiction, it is convenient to divide the proof in two steps.

\emph{Step (i): $\|q(t)\|_{L^\infty(\S)}\leq C_{T^*}$, for every $t\in[0,T^*)$.} Our strategy is to prove that there exists $\overline{r}>8/3$ such that $\|q(t)\|_{W^{3/4,\overline{r}}(\S)}\leq C_{T^*}$, for every $t\in[0,T^*)$, as this immediately implies the claim by Sobolev embeddings.

Let us start by estimating $[q(t)]_{\sn{W}^{3/4,r}(\S)}$, for a general $r\geq2$, using the definition given by \eqref{eq-sobp}. From \eqref{eq-charge}
\beq
\label{acqua}
\big\| (-\Delta_{\S})^{3/8} q (t) \big\|_{L^r (\S) } \leq \big\|  (-\Delta_{\S})^{3/8}  F_0 (t) \big\|_{L^r (\S) } + \big\| (-\Delta_{\S})^{3/8} \big(\L \nu(q)\big) (t) \big\|_{L^r (\S) } .
\eeq
Moreover, from \eqref{eq-diagLB} and \eqref{symbol}, one sees that $I_t$ commutes with $ (-\Delta_{\S})^{3/8}$, as they are multiplication operators with respect to the decomposition in spherical harmonics. As a consequence, exploiting \eqref{eq-Lambda} and Lemma \ref{lem-disp}, we have
\begin{equation}
\label{eq-stimaI}
\big\| (-\Delta_{\S})^{3/8} \big(\L \nu(q)\big) (t) \big\|_{L^r (\S) }  \leq c \int_0^t \frac{1}{|t-s|^{\de(p)} } \big\| (-\Delta_{\S})^{3/8} \nu\big(q(s)\big)\big\|_{L^p (\S) }\ds
\end{equation}
with $p$ the conjugate exponent of $r$ and $\de(p)$ given by \eqref{eq-deltap}. However, \eqref{eq-stimaI} is only formal unless $\de(p)<1$. Hence, it is actually valid for the sole $r\in\big[2,\f{10}{3}\big)$, with $p=\f{r}{r-1}\in\big(\f{10}{7},2 \big]$. Furthermore, combining the equivalence between \eqref{eq-sobp} and \eqref{eq-sobp2}, the fact that \eqref{eq-sobp} is valid also with $\R^2$ in place of $\S$ (with $-\Delta_{\S}$ replaced by the standard Laplacian) and \cite[Lemma A1]{K-95}, and arguing as in the proof of \eqref{schauder_true} (see Appendix \ref{sec-schauder}), one can prove that
\[
\big\| (-\Delta_{\S})^{3/8} \nu\big(q(s)\big)\big\|_{L^p (\S) }\leq c \big\| |q(t)|^{2\sigma} \big\|_{L^{p_1} (\S) }   \big\|  (-\Delta_{\S})^{3/8}q(t) \big\|_{L^{p_2} (\S) } , \quad \text{with}\:\: \frac{1}{p_1} + \frac{1}{p_2 } = \frac{1}{p}.
\]
Therefore, letting $p_2= r$, \eqref{eq-stimaI} reads 
\beq \label{fuochino}
\big\| (-\Delta_{\S})^{3/8} \big(\L \nu(q)\big) (t) \big\|_{L^r (\S) }  \leq c \int_0^t \frac{1}{|t-s|^{\de(p)} }\big\| |q(s)|^{2\sigma} \big\|_{L^{\f{r}{r-2}} (\S) }   \big\|  (-\Delta_{\S})^{3/8}q(s) \big\|_{L^{r} (\S) }\ds.
\eeq
Note that, since $r\in\big[2,\f{10}{3}\big)$, $\frac{r}{r-2}\in\big(\f{5}{2},+\infty\big]$, and thus, as $\sigma <4/5 $, there exists $\overline{r}\in \big(\f{8}{3},\f{10}{3}\big)$ such that
\begin{equation}
\label{eq-crucial}
\big\| |q(s)|^{2\sigma} \big\|_{L^{\f{\overline{r}}{\overline{r}-2}} (\S) } \leq c \| q(s) \|_{L^{4} (\S) }^{2\sigma}.
\end{equation}
Now, using \eqref{costantiq} and \eqref{sourceinfinito} and the facts that
\[
 \big\|  (-\Delta_{\S})^{3/8}  F_0\big\|_{C^0([0,T],H^{3/4}(\S) )}\leq \big\|  F_0\big\|_{C^0([0,T],H^{3/2}(\S) )},\qquad\forall T>0,
\]
and 
\[
 H^{3/4}(\S) \hookrightarrow L^8 (\S)  \hookrightarrow L^{\overline{r}} (\S),
\]
there results
\beq \label{fuoco}
\big\| (-\Delta_{\S})^{3/8} q (t) \big\|_{L^{\overline{r}} (\S) }  \leq c_1+ c_2 \int_0^t \frac{1}{|t-s|^{\de(p)} } \big\| (-\Delta_{\S})^{3/8} q (s) \big\|_{L^{\overline{r}} (\S) }\ds.
\eeq
Finally, from the Gr\"ownall--type estimate given by \cite[Lemma 2.5]{T-85} (and, more precisely, by \cite[Eq. (2.13)]{T-85}), \eqref{fuoco} implies that $\big\| (-\Delta_{\S})^{3/8} q (t) \big\|_{L^{\overline{r}} (\S) }\leq C_{T^*}$, for every $t\in[0,T^*)$. Since $\overline{r}\leq4$, the same can be proved for $\|q (t) \|_{L^{\overline{r}} (\S) }$ and thus the claim follows by Sobolev embeddings.

\emph{Step (ii): $\|q(t)\|_{H^{3/2}(\S)}\leq C_{T^*}$, for every $t\in[0,T^*)$.} Using again \eqref{eq-charge}, we obtain
\[
\|q (t) \|_{H^{3/2} (\S) } \leq \|F_0 (t) \big\|_{H^{3/2} (\S) } + \big\|\big(\L \nu(q)\big) (t) \big\|_{H^{3/2} (\S) }.
\]
Furthermore, combining \eqref{eq-stimautile}, \eqref{sourceinfinito}, \eqref{schauder_true} and the claim of Step (i), one obtains
\[
 \|q (t) \|_{H^{3/2} (\S) } \leq c_1+ c_2\, C_{\sigma, T^*} \int_0^t \frac{1}{|t-s|^{2/3} } \|q (s) \|_{H^{3/2} (\S) }\ds,
\]
so that, again by \cite[Lemma 2.5]{T-85}, we get the claim (possibly, redefining $C_{T^*}$).
\end{proof}

\begin{rem}
\label{rem-global}
In the previous proof, the constraint on the power $\sigma$ is a direct consequence of three things:
\begin{itemize}
 \item[(a)] the choice of a strategy based on a Gr\"onwall--type argument;
 \item[(b)] the use of Lemma \ref{lem-disp}, which thus requires $\de(p)<1$ when combined with (a);
 \item[(c)] the fact that the a--priori estimate on the charge provided by the constants of motion is only on the $L^4(\S)$--norm.
\end{itemize}
One may intuitively think to improve step (c) by using the estimate on the $L^{2\sigma+2}(\S)$--norm, which is another consequence of the energy conservation. However, if one replaces $L^4(\S)$ with $L^{2\sigma+2}(\S)$ in \eqref{eq-crucial}, then the inequality holds only for $\sigma<2/3$, which is an even worse constraint. Hence, the sole remaining possibility to remove the constraint at this level is to find another way to estimate $\big\| |q(s)|^{2\sigma} \big\|_{L^{\f{\overline{r}}{\overline{r}-2}} (\S) }$, but this seems out of reach at the moment.

On the other hand, one may think to improve step (b) proving a finer version of Lemma \ref{lem-disp}, that is a finer version of \eqref{stimaL2}. For instance, if one could replace the factor $t^{-2/3}$ with the factor $t^{-1/2}$, then one would enlarge the set of admissible exponents. Unfortunately, this is not possible due to the behavior as $t\to 0$, as shown by the following computation. First, by \cite[Eq. (10.19.8)]{NIST}, for $a>0$ fixed,
\[
J_\nu \big(\nu+ a \nu^{1/3} \big) \sim  \frac{2^{1/3}}{\nu^{1/3}} \, \mathrm{Ai}\big(-2^{1/3} a \big),\qquad\text{as}\quad\nu\to+\infty ,
\]
where $\mathrm{Ai}(x)$ is the Airy function. Hence, fixing $a$ such that $-2^{1/3} a $ is not a zero of $\mathrm{Ai}(x)$, there exists $\nu_0>0$ such that
\[
 \nu^{1/3} \big|J_\nu \big(\nu+ a \nu^{1/3} \big)\big| \geq c>0,\qquad\forall\nu>\nu_0.      
\]
Let us set, then,
\[
\frac{1}{2t_n}:= n+1/2 + a(n+1/2)^{1/3},\qquad\forall n\in\N\setminus\{0\}.
\]
By \eqref{symbol}, whenever $n$ is large enough,
\begin{multline*}
\sup_{\ell \in \N } |\rho(t_n, \ell)| =\frac{1}{2|t_n|} \sup_{\ell \in \N } | J_{\ell+1/2} (1/ 2t_n)|  \\
\geq   \frac{1}{2 |t_n|}\big| J_{n+1/2} \big( n+1/2 + a(n+1/2)^{1/3}\big)\big|\geq\frac{c}{2(n+1/2)^{1/3}|t_n|}
\end{multline*}
and thus, up to lower order terms, $\sup_{\ell \in \N } |\rho(t_n, \ell)|\geq  \frac{\widetilde{c}}{|t_n|^{2/3} }$, which shows that the order $2/3$ in \eqref{stimaL2} is sharp as $t\to0$.

As a consequence, in order to remove (or enhance) the constraint on $\sigma$, the unique possibility is to work on step (a), which means to change the overall strategy of the proof. However, at the moment it is not clear how to recover a global well--posedness result in $\D(\h)$, without small data assumptions, exploiting different methods.
\end{rem}


\appendix


\section{Schauder estimates on $\S$: proof of (\ref{schauder_true})}
\label{sec-schauder}

Here we prove \eqref{schauder_true}. First, we recall that in the euclidean case, when $\S$ is replaced by $\R^2$, the inequality is well known to be true; namely,
\beq
\label{schauder_R}
\| \nu (g) \|_{ H^{\mu}(\R^2) } \leq c_\mu\,\| g \|^{2\sigma}_{ L^\infty(\R^2) } \| g \|_{ H^{\mu}(\R^2) },\qquad\forall g\in H^\mu(\R^2),
\eeq
whenever $\sigma\geq\f{[\mu]}{2}$ (this can be easily derived, for instance, by \cite{K-95} or by \cite[Lemma A.9]{T-06}). As a consequence, we aim at using the construction of the Sobolev spaces on $\S$ introduced at the end of Section \ref{sec-sobs} to transfer the inequality from $\R^2$ to $\S$.

Fix, then, $g\in H^\mu(\S)$, with $\mu>1$. First, we note that
\begin{equation}
\label{eq-rel1}
 \pi_j\big[\chi_j\nu(g)\big]=\pi_j[\chi_j]\nu\big(\pi_j[u]\big),\qquad j=1,2.
\end{equation}
Consider, now, a new partition of the unity $\{\eta_1,\eta_2\}$, which has all the properties of $\{\chi_1,\chi_2\}$ and, furthermore satisfies
\begin{equation*}
 \eta_1\equiv1,\qquad\text{on}\quad{\rm supp}\{\chi_1\}.
\end{equation*}
Since this new partition yields an equivalent norm for $H^\mu(\S)$, there exist $c_1,\,c_2>0$ such that
\begin{equation}
 \label{eq-normheq}
 c_1\,\big\|\pi_1[\chi_1 g]\big\|_{H^\mu(\R^2)}\leq\big\|\pi_1[\eta_1 g]\big\|_{H^\mu(\R^2)}\leq c_2\,\big\|\pi_1[\chi_1 g]\big\|_{H^\mu(\R^2)}.
\end{equation}
In addition, one can check that
\[
 \pi_1[\eta_1]\equiv1\qquad\text{on}\quad {\rm supp}\left\{\pi_1\big[\chi_1\nu(g)\big]\right\}
\]
and thus, combining with \eqref{eq-rel1}, one obtains
\[
 \pi_1\big[\chi_1\nu(g)\big]=\pi_1[\chi_1]\nu\big(\pi_1[\eta_1g]\big).
\]
As a consequence, using \cite[Lemma A.8]{T-06}, there results
\begin{multline*}
 \big\|\pi_1\big[\chi_1\nu(g)\big]\big\|_{H^\mu(\R^2)}\\[.2cm]
 \leq c\,\left(\big\|\pi_1[\chi_1]\big\|_{L^\infty(\R^2)}\big\|\nu\big(\pi_1[\eta_1g]\big)\big\|_{H^\mu(\R^2)}+\big\|\pi_1[\chi_1]\big\|_{H^\mu(\R^2)}\big\|\nu\big(\pi_1[\eta_1g]\big)\big\|_{L^\infty(\R^2)}\right),
\end{multline*}
so that, from \eqref{infi} and \eqref{schauder_R},
\[
 \big\|\pi_1\big[\chi_1\nu(g)\big]\big\|_{H^\mu(\R^2)}\leq c\,\big\|\pi_1[\eta_1g]\big\|_{L^\infty(\R^2)}^{2\sigma}\big\|\pi_1[\eta_1g]\big\|_{H^\mu(\R^2)}.
\]
Finally, recalling \eqref{eq-normheq} and \eqref{eq-normlab} and noting that
\[
 \gamma_1\,\|u\|_{L^\infty(\S)}\leq\big\|\pi_1[\eta_1g]\big\|_{L^\infty(\R^2)}+\big\|\pi_2[\eta_2g]\big\|_{L^\infty(\R^2)}\leq \gamma_2\,\|u\|_{L^\infty(\S)}
\]
for some suitable $\gamma_1,\,\gamma_2>0$, one finds that
\begin{equation*}
 \big\|\pi_1\big[\chi_1\nu(g)\big]\big\|_{H^\mu(\R^2)}\leq c\,\|g\|_{L^\infty(\S)}^{2\sigma}\|g\|_{H^\mu(\S)}.
\end{equation*}
Since one can make the same construction for $\pi_2\big[\chi_2\nu(g)\big]$, inequality \eqref{schauder_true} follows just recalling again \eqref{eq-normheq}.


\section{Proof of (\ref{eq-integralbessel})}
\label{app-formula}

Here we prove \eqref{eq-integralbessel}. Note that it is actually present in \cite[Section 13.31]{W-95}, but its proof holds for another range of parameters. Hence, it is necessary to prove it again for our range of parameters.

Fix, then, $\ell\in\N$ and $s>0$. Since $J_{\ell +1/2} (z)$ is holomorphic outside the origin (where it displays a ramification point), there results
\[
\oint_{\Gamma_{R,\ve}} f(z)\dz =0,
\]
where
\[
 f(z): =z  J_{\ell+1/2}^2 (z)  \exp^{\i z^2 s   }\qquad\text{and}\qquad\Gamma_{R,\ve}= \gamma_1 \cup \gamma_2 \cup \gamma_3 \cup \gamma_4
\]
is the curve depicted by Figure \ref{sonobravo}. Observe that $\gamma_2 $ is an arc of angle $\pi/4$ of the circle of radius $R\gg s$, whereas $\gamma_4 $ is an arc of angle $\pi/4$ of the circle of radius $\ve\ll s$ (the definitions of $\gamma_1$ and $\gamma_3$ are immediate). Clearly

\begin{figure}[h]
\begin{tikzpicture}
\tikzset{>=latex}
\draw[->] (-1,0) to (6,0) ;
\draw[->] (0,-1) to (0,5) ;
\draw[thick, decoration={
             markings,
             mark=at position 0.5 with \arrow[thick]{>[scale=4.0, length=1.5, width=1.5]}}
       ,postaction=decorate] (0.5,0) to (5.5,0) ;
\draw[thick, decoration={
             markings,
             mark=at position 0.5 with \arrow[thick]{>[scale=4.0, length=1.5, width=1.5]}}
       ,postaction=decorate] (45:5.5) to  (45:0.5) ;
\draw[thick, decoration={
             markings,
             mark=at position 0.8 with \arrow[thick]{>[scale=4.0, length=1.5, width=1.5]}}
       ,postaction=decorate]  (45:0.5) arc [radius=0.5, start angle=45, end angle= 0];
\draw[thick, decoration={
             markings,
             mark=at position 0.5 with \arrow[thick]{>[scale=4.0, length=1.5, width=1.5]}}
       ,postaction=decorate]   (5.5,0) arc [radius=5.5, start angle=0, end angle= 45];
\node[below] at (4,0) {\scalebox{1}{$\gamma_1$} };
\node[right] at (4.7,3.1) {\scalebox{1}{$\gamma_2$} };
\node[above] at (1.5,2) {\scalebox{1}{$\gamma_3$} };
\node[right] at (0.5,.35) {\scalebox{1}{$\gamma_4$} };
\end{tikzpicture}
\caption{The path $\Gamma_{R,\ve}.$} \label{sonobravo}
\end{figure}
\[
\lim_{R\to +\infty} \lim_{\ve\to 0} \oint_{\Gamma_{R,\ve}} f(z)\dz =0,
\]
and
\[
 \lim_{\ve\to 0} \int_{\gamma_4} f(z)\dz =0,
\]
so that, if one can prove that
\beq \label{pallosa}
\lim_{R\to +\infty} \int_{\gamma_2} f(z) \dz= 0,
\eeq
then the proof is complete. Indeed, owing to this fact, one has
\begin{multline*}
 \int_0^{+\infty} k J_{\ell+1/2}^2 (k)  \exp^{\i k^2 s } \dkm= \lim_{R\to +\infty} \lim_{\ve\to 0}\int_{\gamma_1} f(z)\dz\\
 =-\lim_{R\to +\infty} \lim_{\ve\to 0}\int_{\gamma_3} f(z)\dz=\lim_{R\to +\infty} \lim_{\ve\to 0}\int_{-\gamma_3} f(z)\dz
\end{multline*}
and, since
\begin{multline*}
\lim_{R\to +\infty} \lim_{\ve\to 0}\int_{-\gamma_3} f(z)\dz= \imath\lim_{R\to +\infty} \lim_{\ve\to 0}  \int_\ve^R t \exp^{-s t^2} J_{\ell+1/2}^2 \big(t \exp^{\i \frac{\pi}{4}}\big)\dt \\
= \imath\int_0^{+\infty} t \exp^{-s t^2} J_{\ell+1/2}^2 \big(t \exp^{\i \frac{\pi}{4}}\big)\dt =\frac{\imath}{2s} \exp^{-\frac{\i}{2s}} I_{\ell+1/2} \lf(  \frac{\i}{2s} \ri)
\end{multline*}
(where the last equality is established in \cite[Pag. 395]{W-95}), there results
\[
 \int_0^{+\infty} k J_{\ell+1/2}^2 (k)  \exp^{\i k^2 s } \dkm =\frac{\imath}{2s} \exp^{-\frac{\i}{2s}} I_{\ell+1/2} \lf(  \frac{\i}{2s} \ri),
\]
with $I_\nu$ the modified Bessel function of order $\nu$ (see, e.g., \cite[Eq. (10.25.2)]{NIST}). Finally, since, by \cite[Eq. (10.27.6)]{NIST}, $I_\nu (z) = \exp^{\i \nu \frac{\pi}{2}} J_\nu (z  \exp^{-\i \frac{\pi}{2}} )$, whenever $-\pi \leq \arg z\leq \pi/2$, one obtains \eqref{eq-integralbessel}. 

As a consequence, it is left to prove \eqref{pallosa}, or equivalently
\begin{equation}
\label{eq-pallosa2}
\lim_{R\to +\infty} \underbrace{R^2 \int_0^{\f{\pi}{4}} \exp^{\i (2\theta+s R^2 \exp^{\i 2\theta})} J^2_{\ell+1/2} \big(R \exp^{\i \theta}\big)\dteta}_{=:A(R)}=0.
\end{equation}
Note that all the constants below may depend on $s$ and $\ell$. First, we see that, since $\sin\eta\geq\f{2\eta}{\pi}$ for every $\eta\in\big[0,\f{\pi}{2}]$,
\begin{align*}
|A(R)|
&\leq R^2 \int_0^{\f{\pi}{4} } \exp^{-s R^2 \sin (2\theta)} \big| J_{\ell+1/2} \big(R \exp^{\i \theta}\big) \big|^2 \dteta \leq R^2 \int_0^{\f{\pi}{4} } \exp^{-s R^2 \frac{4}{\pi} \theta} \big| J_{\ell+1/2} \big(R \exp^{\i \theta}\big) \big|^2 \dteta\\[.2cm]
& \leq \int_0^{\f{\pi}{4}R^2 } \exp^{-\frac{4s}{\pi}x } \big| J_{\ell+1/2} \big(R \exp^{\i \f{x}{R^2} }\big) \big|^2 \dxm.
\end{align*}
Moreover, by the Schl\"afli's formula (see, e.g., \cite[Eq. (10.9.6)]{NIST}), whenever $|\arg z|  <\pi/2$,
\[
J_{\ell+1/2} (z) =\frac{1}{\pi} \int_0^\pi \cos \lf( z \sin \theta - (\ell+1/2) \theta\ri) \dteta
-\frac{\sin \big( (\ell+1/2) \pi\big)}{\pi} \int_0^{+\infty}\exp^{-z \sinh t-(\ell+1/2)t} \dt.
\]
Then,
\begin{align*}
 J_{\ell+1/2} \big(R \exp^{\i \f{x}{R^2} }\big) = &f_1 (x)+f_2(x) +f_3(x)\\[.2cm]
:=& \frac{1}{2\pi} \int_0^\pi \exp^{\i R\lf( \cos \lf(\f{x}{R^2}\ri) + \i \sin \lf(\f{x}{R^2}\ri) \ri) \sin \theta } \exp^{-\i \theta(\ell+1/2)} \dteta+ \\[.2cm]
&+ \frac{1}{2\pi} \int_0^\pi \exp^{-\i R\lf( \cos \lf(\f{x}{R^2}\ri) + \i \sin \lf(\f{x}{R^2}\ri) \ri) \sin \theta } \exp^{\i \theta(\ell+1/2)} \dteta+  \\[.2cm]
& - \frac{\sin \big( (\ell+1/2) \pi\big)}{\pi} \int_0^{+\infty}\exp^{-R \exp^{\i \f{x}{R^2} }\sinh t-(\ell+1/2)t}  \dt,
\end{align*}
so that
\[
 |A(R)|\leq \sum_{j=1}^3\underbrace{\int_0^{\f{\pi}{4}R^2 } \exp^{-\frac{4s}{\pi}x } |f_j(x)|^2 \dxm}_{=:A_j(R)}.
\]
As a consequence, in order to prove \eqref{eq-pallosa2}, it suffices to show that $A_j(R)\to0$, as $R\to+\infty$, for $j=1,2,3$. Let us consider, first, the cases $j=1,2$. An easy change of variables yields
\[
f_j( x) =  \frac{\imath^{(-1)^j(\ell+1/2)}}{2\pi} \int_{-\pi /2}^{\pi /2} \exp^{\i(-1)^{j-1} R\exp^{\i \f{x}{R^2}}\cos \alpha }\exp^{\i (-1)^j(\ell+1/2)\alpha}\da.
\]
Now, let $\phi\in C_0^\infty (\R)$, with $\mathrm{supp}\{\phi\}\subset[a,b]\subset(-\pi /2, \pi/2)$, $\phi\geq0$ and $\phi\equiv1$ on $[-\pi /4, \pi/4]$, and define
\[
 \widetilde{f}_j( x) :=  \frac{\imath^{(-1)^j(\ell+1/2)}}{2\pi} \int_{-\pi /2}^{\pi /2} \phi(\alpha)\exp^{\i(-1)^{j-1} R\exp^{\i \f{x}{R^2}}\cos \alpha }\exp^{\i (-1)^j(\ell+1/2)\alpha}\da.
\]
It is straightforward that 
\[
 A_j(R)\leq c \underbrace{\int_0^{\f{\pi}{4}R^2 } \exp^{-\frac{4s}{\pi}x } |f_j(x)-\widetilde{f}_j( x)|^2 \dxm}_{=:A_{j,1}(R)}+c\underbrace{\int_0^{\f{\pi}{4}R^2 } \exp^{-\frac{4s}{\pi}x } |\widetilde{f}_j( x)|^2 \dxm}_{=:A_{j,2}(R)}
\]
and, thus, we can prove that $A_{j,1}(R)$ and $A_{j,2}(R)$ are both vanishing as $R\to+\infty$ to get the result. 

In order to study the former term, we see that by construction
\[
 f_j(x)-\widetilde{f}_j( x)=\frac{\imath^{(-1)^j(\ell+1/2)}}{2\pi}\sum_{n=1}^2\int_{a_n}^{b_n}\big(1-\phi(\alpha)\big)\exp^{\i(-1)^{j-1} R\exp^{\i \f{x}{R^2}}\cos \alpha }\exp^{\i (-1)^j(\ell+1/2)\alpha}\da,
\]
with $a_1=-b_2=-\pi/2$ and $b_1=-a_2=-\pi/4$, so that, integrating by parts and using the properties of $\phi$,
\begin{multline*}
 f_j(x)-\widetilde{f}_j( x)=\f{\i^{(-1)^j(\ell+1/2)}}{4\pi R e^{\i\f{x}{R^2}}}\bigg\{2\cos\big((-1)^j(\ell+1/2)\pi/2\big)+\\[.2cm]
 -\sum_{n=1,2}\int_{a_n}^{b_n}\bigg(\f{(1-\phi(\alpha))\exp^{\i(-1)^j(\ell+1/2)\alpha}}{\sin\alpha}\bigg)'\exp^{\i(-1)^{j-1}R\exp^{\i\f{x}{R^2}}\cos\alpha}\da\bigg\}.
\end{multline*}
Hence, using the range of $\alpha$ and $x$ and (again) the properties of $\phi$, one obtains
\[
 |f_j(x)-\widetilde{f}_j( x)|\leq c\,\f{\big(1+\exp^{\f{x}{R}}\big)}{R}
\]
and thus
\[
 A_{j,1}(R)\leq\f{c}{R^2}\bigg(1+\f{1}{\f{4s}{\pi}-\f{2}{R}}\big(1-\exp^{-sR^2+2R}\big)\bigg)\longrightarrow0,\qquad\text{as}\quad R\to+\infty
\]
(note that all the constants here and below may also depend on $\phi$, which is fixed).

Concerning the latter term, we first see that, setting $y= \sin (\alpha /2)$, there results
\begin{multline*}
 \widetilde{f}_j( x)=\f{\i^{(-1)^j(\ell+1/2)}}{\pi}\exp^{\i(-1)^{j-1}R\cos\lf(\f{x}{R^2}\ri)}\,\cdot\\[.2cm]
 \cdot\,\int_{-\sqrt{2}/2}^{\sqrt{2}/2}\f{\phi(2\arcsin y)}{\sqrt{1-y^2}}\exp^{2\imath(-1)^jR\cos\lf(\f{x}{R^2}\ri)y^2}\exp^{(-1)^{j-1}R\sin\lf(\f{x}{R^2}\ri)(1-y^2)}\exp^{\imath(-1)^j(2\ell+1)\arcsin y}\dy.
\end{multline*}
Moreover, using again the construction of $\phi$, we have that
\[
 \widetilde{f}_j( x)=\f{\i^{(-1)^j(\ell+1/2)}}{\pi}\exp^{\i(-1)^{j-1}R\cos\lf(\f{x}{R^2}\ri)}\int_{\widetilde{a}}^{\widetilde{b}}\exp^{\i h(x,r)(-1)^jy^2}g(x,R,y)\dy
\]
with $\widetilde{a}:= \sin (a /2)$, $\widetilde{b}:= \sin (b /2)$,
\[
h(x,r): = 2R  \cos \lf(\f{x}{R^2}\ri),\qquad 
g(y): = \f{\phi(2\arcsin y)}{\sqrt{1-y^2}}\exp^{(-1)^{j-1}R\sin\lf(\f{x}{R^2}\ri)(1-y^2)}\exp^{\imath(-1)^j(2\ell+1)\arcsin y}.
\]
Hence, exploiting the Van der Corput lemma (see, e.g., \cite[Corollary at pag. 334]{S-93} or \cite[Corollary 1.1]{LP-15}) and the fact that $x\in\big[0,\f{\pi}{4}R^2\big]$, we get that
\[
\big|\widetilde{f}_j( x)\big|^2 \leq \f{c}{R} \|g(x,R,\cdot)\|_{H^1(\widetilde{a},\widetilde{b})}^2.
\]
Furthermore, since easy computations yield
\[
 \|g(x,R,\cdot)\|_{H^1(\widetilde{a},\widetilde{b})}^2\leq c \exp^{\f{2x}{R}}\bigg(1+\f{x^2}{R^2}\bigg),
\]
there results
\begin{align*}
 A_{j,2}(R)\leq & \f{c}{R}\int_0^{\f{\pi}{4}R^2} \exp^{-\f{4s}{\pi}x}\exp^{\f{2x}{R}}\bigg(1+\f{x^2}{R^2}\bigg)\dxm\\[.2cm]
           \leq & \f{c}{R}+\f{c}{R}\int_0^{\f{\pi}{4}R^2}\exp^{-\f{4s}{\pi}x}\exp^{\f{2x}{R}}\f{x^2}{R^2}\dxm\\[.2cm]
              = & \f{c}{R}+\f{c}{R}\lf(\f{R}{\lf(2-\f{4sR}{\pi}\ri)^2}\int_0^{\f{\pi}{4}R\lf(2-\f{4sR}{\pi}\ri)}\exp^\omega\omega^2\,d\omega\ri)\\[.2cm]
              = & \f{c}{R}+\f{c}{R}\lf(\f{R}{\lf(2-\f{4sR}{\pi}\ri)^2}\bigg[\exp^\omega\big(\omega^2-2\omega+2\big)\bigg|_0^{\f{\pi}{4}R\lf(2-\f{4sR}{\pi}\ri)}\ri)\longrightarrow0,\qquad\text{as}\quad R\to+\infty.
\end{align*}

It is, then, left to prove the vanishing of $A_3(R)$. However, as one immediately sees that
\[
 f_3(x)=- \frac{\sin \big( (\ell+1/2) \pi\big)}{\pi} \int_0^{+\infty}\exp^{-\i R \sin\lf(\f{x}{R^2}\ri)\sinh t}\exp^{-R \cos\lf(\f{x}{R^2}\ri)\sinh t}\exp^{-(\ell+1/2)t} \dt,
\]
elementary estimates and a change of variables show that
\begin{align*}
 |f_3(x)|\leq & c\int_0^{+\infty}\exp^{-R \cos\lf(\f{x}{R^2}\ri)\sinh t}\exp^{-(\ell+1/2)t} \dt \leq c\int_0^{+\infty}\exp^{-R \cos\lf(\f{x}{R^2}\ri)\sinh t}\dt\\[.2cm]
         \leq & c\int_0^{+\infty}\exp^{-c_1R \sinh t}\dt=c\int_0^{+\infty}\exp^{-c_1Ry}\bigg(1+\f{y}{y+\sqrt{y^2+1}}\bigg)\dy\\[.2cm]
         \leq & c\int_0^{+\infty}\exp^{-c_1Ry}\dy\leq\f{c}{R},
\end{align*}
which implies $A_3(R)\to0$, as $R\to+\infty$.

\end{document}